\numberwithin{equation}{section}
\numberwithin{figure}{section}
\theoremstyle{plain}
\newtheorem{thm}{\protect\theoremname}[section]
  \theoremstyle{definition}
  \newtheorem{example}[thm]{\protect\examplename}
  \theoremstyle{remark}
  \newtheorem{claim}[thm]{\protect\claimname}
    \theoremstyle{remark}
    \newtheorem{question}[thm]{\protect\questionname}
  \theoremstyle{plain}
  \newtheorem{lem}[thm]{\protect\lemmaname}
  \theoremstyle{plain}
  \newtheorem{conjecture}[thm]{\protect\conjecturename}
  \theoremstyle{plain}
  \newtheorem{cor}[thm]{\protect\corollaryname}
\tikzstyle{vertex}=[shape=circle,inner sep=0,fill=black,minimum size=1.5mm,draw]
  \providecommand{\claimname}{Claim}
    \providecommand{\questionname}{Question}
  \providecommand{\conjecturename}{Conjecture}
  \providecommand{\corollaryname}{Corollary}
  \providecommand{\examplename}{Example}
  \providecommand{\lemmaname}{Lemma}
\providecommand{\theoremname}{Theorem}
\begin{document}



\title{On the Corr\' adi-Hajnal Theorem and a question of Dirac}\thanks{The first two authors
thank Institut Mittag-Leffler (Djursholm, Sweden) for the hospitality and creative environment.}

\author{H.A. Kierstead}

\thanks{Department of Mathematics and Statistics, Arizona State University,
Tempe, AZ 85287, USA. E-mail address: kierstead@asu.edu. Research
of this author is supported in part by NSA grant H98230-12-1-0212.}

\author{A.V. Kostochka}

\thanks{Department of Mathematics, University of Illinois, Urbana, IL, 61801,
USA and Institute of Mathematics, Novosibirsk, Russia. E-mail address:
kostochk@math.uiuc.edu. Research of this author is supported in part
by NSF grants DMS-0965587    and   DMS-1266016 and by grant 12-01-00448 
of the Russian Foundation for Basic Research. }

\author{E.C. Yeager}

\thanks{Corresponding author. Department of Mathematics, University of Illinois, Urbana, IL, 61801,
USA. E-mail address: yeager2@illinois.edu. Phone: +1 (304) 276-9647. Research of this author is supported in part
by NSF grant DMS-1266016.}
\begin{abstract}
In 1963, Corr\' adi and Hajnal proved that for all $k\geq1$ and $n\geq3k$,
every graph $G$ on $n$ vertices with minimum degree $\delta(G)\geq2k$
contains $k$ disjoint cycles. The bound $\delta(G) \geq 2k$ is sharp. Here we
characterize those graphs with $\delta(G)\geq2k-1$ that contain $k$
disjoint cycles. This answers the simple-graph case of Dirac's 1963 question on the characterization
of $(2k-1)$-connected graphs with no $k$ disjoint cycles.

Enomoto and Wang refined the Corr\'adi-Hajnal Theorem, proving the following Ore-type version:
For all $k\geq1$ and $n\geq3k$, every graph $G$ on $n$
vertices contains $k$ disjoint cycles, provided that $d(x)+d(y)\geq4k-1$ for all distinct nonadjacent vertices $x,y$. We refine this further for
$k\geq3$ and $n\geq3k+1$: If $G$ is a graph on $n$ vertices such that $d(x)+d(y)\geq4k-3$ 
for all distinct nonadjacent vertices $x,y$,
then $G$ has $k$ vertex-disjoint
cycles if and only if the independence number $\alpha(G)\leq n-2k$  and $G$ is not one of two small exceptions in the case $k=3$.
We also show how the case $k=2$ follows from Lov\' asz' characterization of multigraphs with no two disjoint cycles.
\end{abstract}
\maketitle
{\small{Mathematics Subject Classification: 05C15, 05C35.}}{\small \par}

{\small{Keywords: Disjoint cycles, Ore-degree, graph packing, equitable
coloring, minimum degree.}}{\small \par}

\section{Introduction}

For a graph $G=(V,E)$, let $|G|=|V|$, $\|G\|=|E|$, $\delta(G)$ be the minimum degree of  $G$, and $\alpha(G)$ 
be the independence number of $G$. Let $\overline G$ denote the complement of $G$ and 
for disjoint graphs $G$ and $H$, let $G\vee H$ denote $G\cup H$ together with all edges from $V(G)$ to $V(H)$. The degree of a vertex $v$ in a graph $H$ is $d_H(v)$; when $H$ is clear, we write $d(v)$.

In 1963, Corr\' adi and Hajnal proved a conjecture of Erd\H os by showing the following:
\begin{thm}[\cite{CH}]
 \label{cht} Let $k\in\mathbb{Z}^{+}$. Every graph $G$ with (i)
$|G|\geq3k$ and (ii) $\delta(G)\geq2k$ contains $k$ disjoint cycles. 
\end{thm}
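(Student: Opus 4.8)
The plan is to argue by contradiction using an extremal family of disjoint cycles. Suppose $G$ satisfies (i)--(ii) but has no $k$ disjoint cycles; we may take $k\ge 2$, since for $k=1$ the condition $\delta(G)\ge 2$ already forces a cycle. Among all families of pairwise disjoint cycles in $G$ choose one, $\mathcal{C}=\{C_1,\dots,C_m\}$, with $m$ as large as possible, so $m\le k-1$; subject to this, choose $\mathcal{C}$ minimizing the total number of covered vertices $\sum_i|C_i|$. Put $R=V(G)\setminus\bigcup_i V(C_i)$. Maximality of $m$ makes $G[R]$ acyclic, hence a forest, so it has a vertex $v$ with $d_{G[R]}(v)\le 1$. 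The whole proof is organized around the count at such a $v$: if $v$ has at most two neighbors on each $C_i$, then $d(v)\le d_{G[R]}(v)+2m\le 1+2(k-1)=2k-1<2k\le\delta(G)$, a contradiction.

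The per-cycle bound comes from minimality of $\sum_i|C_i|$ by a rerouting argument. Suppose $v\in R$ has two neighbors $x_a,x_b$ on some $C_i$, splitting $C_i$ into arcs of lengths $d_1\le d_2$ with $d_1+d_2=|C_i|$. Then $v$, the short arc from $x_a$ to $x_b$, and back to $v$ form a cycle of length $d_1+2$; replacing $C_i$ by it keeps $m$ fixed and changes the total length by $2-d_2$. Hence if $d_2\ge 3$ we contradict minimality, so both arcs have length at most $2$. Examining the possibilities shows $d_{C_i}(v)\le 2$ \emph{unless} $C_i$ is a triangle and $v$ is adjacent to all three of its vertices. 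Thus the target inequality can fail only if the chosen $v$ sees an entire triangle of $\mathcal{C}$. (If instead $R=\emptyset$, the family covers $V(G)$; applying the same arc analysis to a vertex of a shortest cycle, which by averaging has at least three neighbors on some other cycle, reduces to the identical triangle configuration, so we may assume $R\ne\emptyset$.)

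It remains to rule out the triangle case, which is the crux. So suppose the forest vertex $v$ is adjacent to all of a triangle $C_i=\{x,y,z\}$. Replacing $C_i$ by $\{v,x,y\}$ frees $z$ into $R$ without changing $m$ or the total length, and I would then exploit the surplus degree guaranteed by $\delta(G)\ge 2k$ — at $z$, at $v$ through its at most one $R$-neighbor, and at the other cycles reached by these vertices — to route one additional disjoint cycle, contradicting the maximality of $m$. Carrying out these switches so that the cycles produced remain pairwise disjoint, and tracking which vertices return to $R$, is where the real case analysis lives; granting it, the count of the first paragraph completes the proof.

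I expect this final step to be the main obstacle, because it is precisely the near-extremal regime. For $K_{2k-1}\vee\overline{K}_{\,n-2k+1}$ a maximum family consists of $k-1$ triangles each meeting the independent part in one vertex; every remaining independent vertex then sends exactly two edges to each triangle, $G[R]$ is a star, and the forest vertex $v$ attains $d(v)=2k-1=1+2m$ with equality. The switching lemmas must therefore be sharp enough to turn the single extra unit of degree furnished by $\delta(G)\ge 2k$ into an extra cycle, while leaving this genuine $\delta=2k-1$ example — the one the later sections set out to characterize — intact.
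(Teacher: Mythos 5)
Your first two paragraphs are sound, and they reproduce the standard opening of this kind of argument: take $\mathcal{C}$ maximizing the number of cycles and, subject to that, minimizing $\sum_i|C_i|$; note that $G[R]$ is a forest; use the arc-shortening trick to show that a vertex $v\in R$ with $d_{G[R]}(v)\le 1$ satisfies $\|v,C_i\|\le 2$ for every cycle except possibly triangles all of whose vertices are adjacent to $v$; and observe that otherwise $d(v)\le 1+2(k-1)<2k$. (For the record, the paper never proves Theorem~\ref{cht}; it cites Corr\'adi and Hajnal, and it builds exactly this kind of machinery --- augmented by the further optimality conditions (O3) and (O4) --- to prove the stronger Theorem~\ref{main}.)

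The genuine gap is your third paragraph: the case of a vertex of $R$ adjacent to all three vertices of a triangle of $\mathcal{C}$ is not a residual nuisance, it is the theorem, and ``exploit the surplus degree to route one additional disjoint cycle'' is a statement of intent, not an argument. Concretely, the swap $xyz\mapsto vxy$ only exchanges $z$ for $v$: the new remainder vertex $z$ again has degree at least $2k$, may itself send three edges to each of several triangles of $\mathcal{C}$ and two to every other cycle, and the same swap is then available again, so nothing you have said rules out that every sequence of such exchanges cycles forever without producing an $(m+1)$-st disjoint cycle. Moreover, as your own closing paragraph observes, the one-vertex count is tight (the value $d(v)=1+2m$ is attainable), so no argument that inspects a single vertex of $R$ and a single local move can finish. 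The known proofs --- Corr\'adi and Hajnal's original lengthy case analysis, and likewise the paper's proof of Theorem~\ref{main} --- obtain the contradiction by summing the degree hypothesis over \emph{several} vertices of $R$ simultaneously (the two endpoints of a longest path of $R$ together with further leaves of the forest) against all cycles at once, which is precisely why the finer optimality criteria (length of a longest path in $R$, then $\|R\|$) must be introduced before any counting succeeds; see, e.g., the pair-counting in Claim~\ref{clm:2} of the paper. Until you supply that multi-vertex counting (or a substitute for it), what you have is the easy half of the proof plus a correct identification of where the hard half lives.
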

Clearly, hypothesis (i) in the theorem is sharp. Hypothesis (ii) also is sharp.
Indeed, if a graph $G$  has $k$ disjoint cycles, then $\alpha(G)\leq|G|-2k$, 
since every cycle contains at least
two vertices of $G-I$ for any independent set $I$. 
 Thus  $H:=\overline{K_{k+1}}\vee K_{2k-1}$
satisfies (i) and has $\delta(H)=2k-1$, but  does not have $k$ disjoint
cycles, because $\alpha(H)=k+1>|H|-2k$. There are several works refining Theorem~\ref{cht}.
Dirac and Erd\H os~\cite{DE} showed that if a graph $G$ has many more vertices of degree at least $2k$ than vertices of
degree at most $2k-2$, then $G$ has $k$ disjoint cycles. Dirac~\cite{Di} asked:

\begin{question}\label{DiracQ}
 Which $(2k-1)$-connected graphs do not have $k$ disjoint cycles?
 \end{question} 
 
  He also resolved his question for $k=2$ by describing all $3$-connected multigraphs on at least $4$ vertices in which every two
  cycles intersect. It turns out that the only simple $3$-connected graphs with this property are wheels.
  Lov\' asz~\cite{Lo} fully described all multigraphs in which every two
  cycles intersect.

 The following result in this paper yields a full answer to Dirac's question for simple graphs. 
\begin{thm}
\label{ch++}Let $k\geq 2$. Every graph $G$ with (i)~$|G|\geq3k$
and (ii)~$\delta(G)\geq2k-1$ contains $k$ disjoint cycles if and
only if
\begin{enumerate}[label=(H\arabic*), ref=(H\arabic*)]
\item[(H3)]~$\alpha(G)\leq|G|-2k$, and
\item[(H4)]~if $k$ is odd and $|G|=3k$, then $G\neq 2K_k \vee\overline{K_k} $
 and if $k=2$ then $G$ is not a wheel.
\end{enumerate}
\end{thm}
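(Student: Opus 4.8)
The implication that $k$ disjoint cycles force (H3) is the observation already recorded in the excerpt: every cycle meets $V(G)\setminus I$ in at least two vertices for any independent set $I$, so $\alpha(G)\le |G|-2k$. For (H4) I would check directly that the listed graphs admit no $k$ disjoint cycles. When $|G|=3k$, any $k$ disjoint cycles must be $k$ vertex-disjoint triangles partitioning $V(G)$ (since $k$ cycles use at least $3k=|G|$ vertices). In $2K_k\vee\overline{K_k}$, write $A_1,A_2$ for the two cliques and $B$ for the independent set, each of size $k$. As $B$ is independent, each triangle meets $B$ in at most one vertex, so covering $B$ forces every triangle to use exactly one vertex of $B$ and two of $A_1\cup A_2$; since $A_1,A_2$ are mutually nonadjacent, those two vertices lie in a common $A_i$. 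Thus each $A_i$ is covered by disjoint pairs, forcing $k$ to be even, so for odd $k$ no triangle partition exists. For $k=2$, in a wheel every cycle avoiding the hub is the rim itself, so two disjoint cycles would leave only the hub for the second cycle, which is impossible.

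\textbf{Sufficiency: setup.} Assume $G$ satisfies (i), (ii), (H3), (H4); I want to produce $k$ disjoint cycles. For $k=2$ I would appeal to Lov\'asz' description of the multigraphs in which every two cycles meet: under $\delta(G)\ge 3$, $\alpha(G)\le |G|-4$, and the exclusion of wheels, none of the graphs on that list survives, so two disjoint cycles exist. Hence assume $k\ge 3$ and, for contradiction, that $G$ has no $k$ disjoint cycles. Since $\delta(G)\ge 2k-1\ge 2(k-1)$ and $|G|\ge 3k\ge 3(k-1)$, Theorem~\ref{cht} already yields $k-1$ disjoint cycles. Among all families $\mathcal C=\{C_1,\dots,C_{k-1}\}$ of $k-1$ disjoint cycles I would fix one that is optimal, say of minimum total length, breaking ties to keep the leftover as spread out as possible. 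Setting $R:=G-\bigcup_i V(C_i)$, maximality of $\mathcal C$ forces $R$ to induce a forest, while every vertex of $R$ still has degree at least $2k-1$, all absorbed by edges inside the forest $R$ and edges to the $k-1$ cycles.

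\textbf{Sufficiency: the structural core.} The heart of the argument is a family of local rerouting lemmas. The principle is that if a leftover vertex, or a pair of adjacent leftover vertices, distributes its neighbors too generously along a single $C_i$, then one can detach a short arc of $C_i$, splice the leftover vertices into it to build a new short cycle, and still keep a shorter cycle on the rest of $C_i$; this produces $k$ disjoint cycles, a contradiction. Iterating these exchanges against the minimality of the total length should drive every $C_i$ down to a triangle and cap the number of $R$-to-triangle edges. A global degree count then collides with $\delta(G)\ge 2k-1$ unless $|G|=3k$ and $G$ collapses to a very rigid shape, in which the $k-1$ triangles together with $R$ organize into two cliques and an independent transversal. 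Here (H3) is exactly what blocks the cheap failure of having an oversized independent set, so the only obstruction left standing is the parity obstruction exhibited in the necessity argument.

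\textbf{Main obstacle.} I expect the genuine difficulty to lie entirely in the structural core: making the rerouting lemmas sharp enough to (a) force every cycle to a triangle, (b) control how the leftover forest attaches to the triangles, and (c) pin the extremal graph down to exactly $2K_k\vee\overline{K_k}$ for odd $k$ with $|G|=3k$ (and, in the $k=2$ base, to wheels), so that (H4) is precisely the hypothesis ruling these out. Closing the case analysis for vertices of degree \emph{exactly} $2k-1$ with no slack is the delicate point, since this is the single value just below the Corr\'adi--Hajnal threshold $2k$.
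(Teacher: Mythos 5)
Your necessity check and your $k=2$ reduction agree with the paper: the paper also handles $k=2$ via Lov\'asz' characterization (through its Lemma~\ref{LoL}), using (i) to exclude $K_5$, (H4) to exclude wheels, and (H3) to exclude type (3) of Theorem~\ref{lovasz}. (Note that your three stated conditions alone do not kill $K_5$, which has $\delta\ge3$, $\alpha(K_5)=1\le|K_5|-4$, and is not a wheel; you need (i) for that.) For $k\ge3$ with $|G|\ge3k+1$, your ``structural core'' is a high-level sketch of exactly what the paper's Theorem~\ref{main} does: an optimal collection of $k-1$ disjoint cycles and rerouting arguments against the optimality criteria. That is the right strategy there, although essentially all of the content lies in the long case analysis you defer.

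The genuine gap is the subcase $k\ge3$, $|G|=3k$. You assume the same rerouting argument will ``collapse'' this case to the extremal graph $2K_k\vee\overline{K_k}$, but the paper's rerouting proof is explicitly restricted to $|G|\ge3k+1$ (hypothesis \ref{H1}), and this restriction is not cosmetic: after Lemma~\ref{R=00003D3} the argument pivots on the existence of a cycle of length exactly $4$ in the optimal collection (which follows from $|V(\mathcal{C})|=|G|-3\ge3(k-1)+1$), and the entire endgame (Lemma~\ref{2cases}, the $K_{3,4}$ structure of $G[R\cup C]$, and the final independent-set count) is built around that $4$-cycle. When $|G|=3k$, any $k$ disjoint cycles form a triangle partition and every cycle in the optimal collection is a triangle, so none of that machinery applies; moreover the conclusion itself changes, since $2K_k\vee\overline{K_k}$ satisfies (i), (ii), and (H3) yet has no $k$ disjoint cycles, so any argument must terminate in an exceptional structure rather than a contradiction. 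The paper does not handle this case by rerouting at all: it observes that a triangle partition of $G$ is exactly an equitable $k$-coloring of $\overline G$, checks $\Delta(\overline G)\le k$ and, by Brooks' Theorem with (H3), $\chi(\overline G)\le k$, and then invokes Corollary~\ref{che+}, which encodes the known cases of the Chen--Lih--Wu conjecture and rests on the separate papers \cite{KKJGT}, \cite{KK2}, \cite{KK-Strong}. So the step you compress into ``a global degree count then collides with $\delta(G)\ge2k-1$ unless $|G|=3k$ and $G$ collapses to a very rigid shape'' is, in reality, an independent and substantial theorem (the $s=3$ case of Chen--Lih--Wu, equivalently the $3k$-vertex analog that the authors defer to a subsequent paper in the $\sigma_2$ setting), and your proposal contains no argument for it.
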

Since for every independent set $I$ in a graph $G$ and every $v\in I$, $N(v)\subseteq V(G)-I$,
if $\delta(G)\geq2k-1$ and $|I|\geq |G|-2k+1$, then $|I|=|G|-2k+1$ and $N(v)=V(G)-I$ for every
$v\in I$. It follows that every graph $G$ satisfying (ii) and not satisfying~(H3) contains $K_{2k-1,|G|-2k+1}$ 
and is contained in $K_{|G|}-E(K_{|G|-2k+1})$.
The conditions of Theorem~\ref{ch++} can be tested in polynomial
time.

Most likely, Dirac intended his question to refer to multigraphs; indeed, his result for $k=2$ 
is for
 multigraphs. But the case of simple graphs is the most important in the question.
 In~\cite{KKY2} we heavily use the results of this paper to obtain
 a characterization of $(2k-1)$-connected multigraphs that contain $k$ disjoint cycles, answering Question~\ref{DiracQ} in full.


Studying Hamiltonian properties of graphs, Ore introduced the {\em minimum Ore-degree}  $\sigma_{2}$: If $G$ is a complete graph, then
 $\sigma_{2}(G)=\infty$, otherwise $\sigma_2(G):=\min\{d(x)+d(y):xy\not\in E(G)\}$.
Enomoto \cite{Enomoto} and Wang \cite{Wang} generalized the Corr\' adi-Hajnal
Theorem in terms of  $\sigma_{2}$: 
\begin{thm}[\cite{Enomoto},\cite{Wang}]
\label{enomoto} Let $k\in\mathbb{Z}^{+}$. Every graph $G$ with
(i) $|G|\geq3k$ and
\begin{enumerate}[label=(H\arabic*), ref=(H\arabic*)]
\item[(E2)] $\sigma_{2}(G)\geq4k-1$
 \end{enumerate}
  contains $k$ disjoint
cycles. 
\end{thm}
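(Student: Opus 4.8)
The plan is to argue by induction on $k$, using an extremal collection of $k-1$ disjoint cycles together with an augmentation argument. For the base case $k=1$ one only needs that a graph with $|G|\ge 3$ and $\sigma_2(G)\ge 3$ contains a cycle: if $G$ were a forest then, since $|G|\ge 3$, it would contain two nonadjacent leaves $x,y$ with $d(x)+d(y)\le 2<3$, contradicting (E2). Before the inductive step it is convenient to dispose of the high-degree situation: if $\delta(G)\ge 2k$ then Theorem~\ref{cht} already gives $k$ disjoint cycles, so we may assume some vertex has degree at most $2k-1$. A useful structural fact then follows directly from (E2): the set $L=\{v: d(v)\le 2k-1\}$ is a clique, since two nonadjacent vertices of $L$ would have degree sum at most $4k-2<4k-1$. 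In particular $|L|\le 2k$, so all but at most $2k$ vertices have degree at least $2k$.

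For the inductive step, assume $k\ge 2$ and that the statement holds for $k-1$. Since $\sigma_2(G)\ge 4k-1\ge 4(k-1)-1$ and $|G|\ge 3k>3(k-1)$, the induction hypothesis yields $k-1$ disjoint cycles. Among all such collections $\mathcal C=\{C_1,\dots,C_{k-1}\}$ I would choose one minimizing the total length $\sum_i|C_i|$, and set $R:=G-\bigcup_i V(C_i)$. Minimality forces each $C_i$ to be an induced cycle (a chord would yield a shorter cycle on a subset of its vertices, contradicting minimality) and, more generally, forbids any rerouting through $R$ that shortens the total length. If $R$ contains a cycle we are done, so we may assume $R$ is a forest. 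Since $\sum_i|C_i|\ge 3(k-1)$, we have $|R|\ge 3k-\sum_i|C_i|$, which can be as small as $3$ minus the total excess of the $C_i$ over triangles; controlling this interplay between the sizes of the $C_i$ and of $R$ is where the bookkeeping concentrates.

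The core of the argument is to manufacture a $k$-th cycle by rerouting. In the forest $R$ I would pick two nonadjacent vertices $x,y$ of small $R$-degree (leaves of $R$, or the endpoints of a longest path, using the clique structure of $L$ to guarantee two such nonadjacent vertices exist and to handle the case when $R$ is tiny). Since $d_R(x)+d_R(y)$ is small while $d_G(x)+d_G(y)\ge 4k-1$, the pair $\{x,y\}$ sends at least $4k-1-d_R(x)-d_R(y)$ edges into $\bigcup_i V(C_i)$; for leaves this is at least $4k-3$, so averaging over the $k-1$ cycles produces a $C_i$ receiving more than four edges from $\{x,y\}$. With that many edges into a single chordless cycle, $x$ (and, if needed, $y$ with an $R$-neighbor) can be spliced into $C_i$ while a shorter cycle is extracted from $C_i$, giving either a new disjoint cycle — hence $k$ disjoint cycles — or a contradiction to the minimality of $\sum_i|C_i|$.

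The main obstacle is precisely this rerouting step: one must show that whenever a nonadjacent pair sends sufficiently many edges into a single induced minimal cycle, those edges can always be organized into an augmenting configuration. This needs a case analysis according to (a) the length of $C_i$, (b) the cyclic positions of the neighbors of $x$ and $y$ on $C_i$ (consecutive neighbors give a shortcut, neighbors two apart allow reinsertion of a skipped vertex), and (c) how $x$ and $y$ attach to $R$. The genuinely delicate cases are when $|R|$ is small, so that only a few candidate pairs $\{x,y\}$ are available and the surplus degree is forced onto very short cycles, and when every $C_i$ is a triangle, where the extracted ``shorter cycle'' must be assembled from a triangle vertex together with vertices of $R$. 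Handling these uniformly, while respecting the extremal choice of $\mathcal C$, is the technical heart of the proof.
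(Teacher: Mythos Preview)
The paper does not give its own proof of this theorem: Theorem~\ref{enomoto} is quoted from \cite{Enomoto} and \cite{Wang} as a known result, so there is no in-paper argument to compare against. What the paper \emph{does} prove is Theorem~\ref{main}, and the machinery there (an extremal family of $k-1$ cycles chosen to minimize total length and then maximize a longest path in the remainder, followed by a detailed rerouting analysis) is in the same spirit as your outline and as the original proofs of Enomoto and Wang.

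That said, what you have written is a plan, not a proof. Your final two paragraphs explicitly defer the entire content: you assert that five edges from a nonadjacent pair into a single minimal cycle can ``always be organized into an augmenting configuration,'' and that the cases where $|R|$ is tiny or all $C_i$ are triangles can be ``handled uniformly,'' but you carry out none of this. These are exactly the places where the work lies. For example, your averaging step needs two \emph{nonadjacent} vertices of small $R$-degree, but $|R|$ can be as small as $|G|-\sum|C_i|$, which may be $0$, $1$, or $2$ if some $C_i$ are long; you gesture at the clique structure of $L$ to cover this but give no argument. Even when $|R|\ge 3$ and you get a cycle $C_i$ receiving $\ge 5$ edges from $\{x,y\}$, the claim that this forces either a shorter replacement or a new disjoint cycle requires a genuine case analysis on $|C_i|$ and on how $N(x)\cap C_i$ and $N(y)\cap C_i$ interlace, and on what $R$ looks like near $x$ and $y$; none of this is present.

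In short: your strategy is the right one and matches the literature, but as written it is a sketch that stops before the substantive step. To turn it into a proof you would need to execute the rerouting case analysis you describe, with particular care when $R$ has fewer than three vertices.
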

Again $H:=\overline{K_{k+1}}\vee K_{2k-1}$ shows that hypothesis (E2)
of Theorem~\ref{enomoto} is sharp. What happens if we relax (E2) to (H2): $\sigma_2(G)\ge4k-3$, but again add hypothesis (H3)?
 Here are two interesting examples.

\begin{example}
Let $k=3$ and  $\mathbf Y_1$ be the graph obtained by  twice subdividing one of the edges $wz$ of $K_8$, i.e., replacing $wz$ by the path $wxyz$. Then $|\mathbf Y_1|=10=3k+1$, $\sigma_2(\mathbf Y_1)=9=4k-3$, and $\alpha(\mathbf Y_1)=2\le|\mathbf Y_1|-2k$. However, $\mathbf Y_1$ does not contain $k=3$ disjoint cycles, since each cycle would need to contain three vertices of the original $K_8$ (see Figure~\ref{Y1}).
\end{example}

\begin{figure}[ht]\label{K8 subdivided}
\begin{center}
\begin{subfigure}[b]{0.3\textwidth}\centering
\begin{tikzpicture}[scale=0.4, rotate=90]
\foreach \y in {1,...,8}
	{\draw(0,0) +(22.5+45*\y: 3 cm)  node[vertex] (p\y){} ;}
\draw (p1)+(-.5,1.25)  node[vertex] (q1){} ;
\draw (p2)+(.5,1.25)  node[vertex] (q2){} ;
\draw[thick] (p1) -- (q1) -- (q2) -- (p2);
\foreach \x in {2,...,8}{
	\foreach \y in {\x,...,8}{
		\draw (p\x) -- (p\y);
		}}
\foreach \x in {3,...,8}{
	\draw (p1) -- (p\x);}
\draw (-3.5,-2) node{};
\end{tikzpicture}\caption{${\bf Y_1}$}\label{Y1}
\end{subfigure}
\begin{subfigure}[b]{0.3\textwidth}
\centering
  \begin{tikzpicture}[scale=0.45]
\foreach \x in {1,...,4}
{ \draw(2*\x,0) node[vertex] (x\x){} ;}
\foreach \y in {1,...,3}
{ \draw(1+2*\y,3) node[vertex] (y\y){} ;}
\foreach \x in {1,...,4}
	\foreach \y in {1,2,3}
		{
		\draw (x\x) -- (y\y);
		}
\draw(3,-2) node[vertex,
label=below:$u$,draw] (u){} ;
\draw(7,-2) node[vertex,
label=below:$v$,draw] (v){} ;
\draw (x1) -- (u) -- (x2);
\draw (x3) -- (v) -- (x4);
\draw (u)--(v);
\draw(11,1) node[vertex](x){};
\draw[gray] (u)--(x)--(v);
\foreach \x in {1,...,4}
	\draw[gray] (x)--(x\x);
\foreach \y in {1,...,3}
	\draw[gray] (x)--(y\y);
 \end{tikzpicture}\caption{${\bf Y_2}$}\label{Y2}
\end{subfigure}
\end{center}\caption{}
\end{figure}

\begin{example}
Let $k=3$. Let $Q$ be obtained from $K_{4,4}$ by replacing a vertex $v$ and its incident edges $vw,vx,vy,vz$ by new vertices $u,u'$ and edges $uu',uw,ux,u'y,u'z$; so $d(u)=3=d(u')$ and contracting $uu'$ in $Q$ yields $K_{4,4}$. Now set   $\mathbf Y_2:=K_1\vee Q$.
  Then $|\mathbf Y_2|=10=3k+1$, $\sigma_2(\mathbf Y_2)=9=4k-3$, and $\alpha(\mathbf Y_2)=4\le|\mathbf Y_2|-2k$. However, $\mathbf Y_2$ does not contain $k=3$ disjoint cycles, since each $3$-cycle  contains the only vertex of $K_1$ (see Figure~\ref{Y2}).
\end{example}

Our  main  result is:
\begin{thm}
\label{main} 
Let  $k\in\mathbb{Z}^{+}$ with $k\ge3$.  Every graph $G$ with
\begin{enumerate}[label=(H\arabic*), ref=(H\arabic*)]
\item\label{H1} $|G|\ge3k+1$, 
\item\label{H2} {$\sigma_{2}(G)\geq4k-3$}, 
and
\item\label{H3} $\alpha(G)\leq|G|-2k$
\end{enumerate}
 contains $k$ disjoint cycles, unless $k=3$ and 
 $G\in \{\mathbf Y_1,\mathbf Y_2\}$. Furthermore, for fixed $k$ there is a polynomial time algorithm that either produces $k$ disjoint cycles or demonstrates that one of the hypotheses fails.
\end{thm}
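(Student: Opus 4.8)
The plan is to argue by contradiction, upgrading the Enomoto--Wang threshold from $4k-1$ to $4k-3$ at the price of a delicate near-tight analysis controlled by hypothesis \ref{H3}. Since $4k-3\ge 4(k-1)-1$ and $|G|\ge 3k+1\ge 3(k-1)$, Theorem~\ref{enomoto} applied with parameter $k-1$ already guarantees that $G$ has $k-1$ disjoint cycles. So, assuming $G$ has no $k$ disjoint cycles, I would fix a family $\mathcal C=\{C_1,\dots,C_{k-1}\}$ of $k-1$ disjoint cycles that minimizes the total length $\sum_i|C_i|$, breaking ties by a secondary rule such as maximizing $\|G[R]\|$, where $R:=G-V(\mathcal C)$. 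Because $\mathcal C$ cannot be extended to $k$ disjoint cycles, $G[R]$ is a forest; minimality of total length forces each $C_i$ to be chordless (a chord would yield a strictly shorter replacement cycle on a subset of $V(C_i)$). Together with $|G|\ge 3k+1$ this yields a usable lower bound $|R|\ge 4$.

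The engine of the argument is a family of routing/exchange lemmas bounding the adjacency between $R$ and the cycles. The template is: for a path $P$ of $G[R]$ with endpoints $x,y$ (possibly $x=y$) and a cycle $C\in\mathcal C$, the edges joining $\{x,y\}$ to $C$ are too sparse to splice $P$ into $C$ and simultaneously liberate enough vertices of $C$ to close a new cycle with the remaining edges of $R$; otherwise we would obtain $k$ disjoint cycles. Pushed hard, these lemmas both force the optimal cycles to be triangles and cap, for a carefully chosen nonadjacent pair of vertices, the number of edges sent to each $C_i$ at essentially $2$. Schematically this gives a bound of the shape $d(x)+d(y)\le 4(k-1)+\bigl(d_R(x)+d_R(y)\bigr)=4k-4+\bigl(d_R(x)+d_R(y)\bigr)$ for the extremal pair. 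Confronting this with $\sigma_2(G)\ge 4k-3$ shows the inequality is tight to within one, which is simultaneously what drives the proof and what spawns the exceptions.

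The main obstacle is precisely this tightness. Equality up to one pins down a rigid picture: every $C_i$ is a triangle, $|R|=|G|-3(k-1)$, the extremal nonadjacent pair sends exactly two edges to nearly every triangle, and $G[R]$ contributes only a single crucial edge or a single low-degree vertex (as with the subdivision vertices of $\mathbf Y_1$, whose degree-$2$ bottleneck realizes $\sigma_2=4k-3$ against a degree-$7$ nonneighbor). Here hypothesis \ref{H3} does its work: by the remark following Theorem~\ref{ch++}, violating \ref{H3} would force a $K_{2k-1,|G|-2k+1}$ structure, exactly the degenerate configuration in which $R$ and its attachments leave no room to close a new cycle; imposing \ref{H3} excludes it. I would then run a finite case analysis on how the rigid $R$ attaches to the triangles. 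In each case one either inserts a vertex of $R$ into some triangle and reuses the displaced vertex with the remaining $R$-edge to build the $k$-th cycle, or one verifies that $k=3$ and the forced adjacency pattern coincides with $\mathbf Y_1$ or $\mathbf Y_2$. Isolating these two patterns as the only survivors is the delicate endpoint of the argument.

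For the algorithmic statement I would observe that the entire argument is constructive for fixed $k$. Theorem~\ref{enomoto} (or a direct bounded search) produces $k-1$ disjoint cycles in polynomial time; each exchange step involves only a bounded number of cycle vertices and bounded-length paths in $R$, hence is polynomial; and the terminal structural tests, including the constant-size check against $\mathbf Y_1,\mathbf Y_2$ when $k=3$, are cheap. Demonstrating that a hypothesis fails is equally efficient: \ref{H1} and \ref{H2} are checked directly, and an independent set of size $|G|-2k+1$ is the complement of a vertex cover of size $2k-1$, whose existence is decidable in polynomial time for fixed $k$, so a violation of \ref{H3} is likewise detectable.
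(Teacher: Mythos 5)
Your overall skeleton---an optimal family of $k-1$ disjoint cycles, exchange lemmas bounding the adjacencies between the remainder $R$ and the cycles, near-tightness against \ref{H2}, and a final appeal to \ref{H3}---is the same as the paper's, but two of your steps fail as stated. First, the initialization. Applying Theorem~\ref{enomoto} with parameter $k-1$ does give $k-1$ disjoint cycles, but it gives no control on how many vertices they leave uncovered, and your inference ``minimality of total length makes each cycle chordless, hence with $|G|\ge 3k+1$ we get $|R|\ge 4$'' is a non sequitur: chordless cycles can be arbitrarily long, and a priori every family of $k-1$ disjoint cycles in a counterexample might cover all but one or two vertices, in which case there is no path $P$, no endpoint pair $x,y$, and no exchange argument to run. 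The paper avoids this by taking an \emph{edge-maximal} counterexample (legitimate since adding edges preserves \ref{H1}--\ref{H3}): adding any missing edge creates $k$ disjoint cycles, and discarding the cycle through the added edge leaves $k-1$ disjoint cycles of $G$ avoiding at least three vertices, which is exactly what guarantees $|R|\ge 3$ under the length-minimization criterion. Your sketch has no substitute for this device.

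Second, the terminal structure you predict is wrong, and with it the mechanism by which \ref{H3} enters. You claim the exchange lemmas force every optimal cycle to be a triangle. The paper proves the opposite: for its optimal collection, $R$ is a path on exactly three vertices (Lemma~\ref{clm:7} and Lemma~\ref{R=00003D3}), and then \ref{H1} makes it impossible for all $k-1$ cycles to be triangles (that would give $|G|=3k$); indeed the maximum cycle length is exactly $4$. The $4$-cycles are not a nuisance but the crux of the endgame: each $C\in\mathcal C_4$ satisfies $G[R\cup C]\in\{K_{3,4},K_{3,4}-e\}$ (Lemma~\ref{2cases}), the union of $R$ with all of $\mathcal C_4$ is a complete bipartite graph $K_{2t_4+1,2t_4+2}$ possibly minus one edge (Claim~\ref{lem11}), and the contradiction is reached by exhibiting an independent set $M\cup T$ of size $t_3+2t_4+2=|G|-2k+1$ (Claims~\ref{lem12}--\ref{lemMM}), violating \ref{H3}. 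Your proposal never constructs such an independent set; instead you invoke the remark following Theorem~\ref{ch++}, but that remark concerns graphs with $\delta(G)\ge 2k-1$ that \emph{fail} \ref{H3}, and it is inapplicable here: you are assuming \ref{H3} holds, and the Ore condition \ref{H2} does not imply $\delta(G)\ge 2k-1$ (both $\mathbf Y_1$ and $\mathbf Y_2$ have vertices of degree well below $2k-1$). So the heart of the proof---how the rigid structure of a counterexample yields an independent set of size $|G|-2k+1$---is missing. Your algorithmic remarks are reasonable in spirit (the paper likewise converts its claim-checking into a polynomial-time procedure), but they inherit the dependence on these flawed structural claims.
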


 Theorem~\ref{main} 
  is proved
in Section 2.  In Section 3 we discuss the case $k=2$. 
In Section 4 we discuss connections to equitable colorings and derive Theorem~\ref{ch++}  from Theorem~\ref{main} and known results.

Now we show examples demonstrating
the sharpness of hypothesis
(H2) that $\sigma(G)\ge 4k-3$, then discuss some unsolved problems, and then review our notation.

\begin{example}
\label{H} Let $k\geq3$, $Q=K_3$ and $G_k:=\overline{K_{2k-2}}\vee(\overline{K_{2k-3}}+Q)$. Then
 $|G_k|=4k-2\geq3k+1$, $\delta(G_k)=2k-2$ and $\alpha(G_k)=|G_k|-2k$. 
If $G_k$ contained $k$ disjoint cycles, then at least $4k-|G_k|=2$ would be
$3$-cycles; this is impossible, since any $3$-cycle in $G_k$ contains
an edge of  $Q$. This construction can be extended. Let $k=r+t$, where $k+3\le2r\le2k$, $Q'=K_{2t}$, and put $H=G_r\vee Q'$. Then
 $|H|=4r-2+2t=2k+2r-2\geq3k+1$, $\delta(H)=2r-2+2t=2k-2$ and $\alpha(H)=2r-2=|H|-2k$.  If $H$ contained $k$ disjoint cycles, then at least $4k-|H|=2t+2$ would be
$3$-cycles; this is impossible, since any $3$-cycle in $H$ contains
an edge of $Q$ or a vertex of $Q'$. 
\end{example}
There are several special examples for small $k$. The constructions of $\mathbf Y_1$ and $\mathbf Y_2$ can be extended to $k=4$ at the cost of lowering $\sigma_2$ to $4k-4$. Below is another small family of special examples.  The blow-up of $G$ by $H$ is denoted by $G[H]$; that is, $V(G[H])=V(G)\times V(H)$
and $(x,y)(x',y')\in E(G[H])$ if and only if $xx'\in E(G)$, or $x=x'$ and $yy'\in E(H)$.

\begin{example}
\label{G_4} For $k=4$, $G:=C_{5}[\overline{K_{3}}]$ satisfies $|G|=15\geq3k+1$,
$\delta(G)=2k-2$ and $\alpha(G)=6<|G|-2k$. 
Since $\mbox{girth}(G)=4$, we see that $G$ has at most $\frac{|G|}{4}<k$ disjoint
cycles. This example can be extended to $k=5,6$ as follows. Let $I=\overline{K_{2k-8}}$ and $H=G\vee I$. Then $|G|=2k+7\geq3k+1$,
$\delta=2k-2$ and $\alpha(G)=6<|G|-2k=7$. If $H$ has $k$ disjoint cycles then each of the at least $k-(2k-8)=8-k$ cycles that do not meet $I$ use $4$ vertices of $G$, and the other cycles use at least $2$ vertices of $G$. Then $15=|G|\ge2k+2(8-k)=16$, a contradiction.
\end{example}

\noindent\textbf{Unsolved problems.} 
1. For every fixed $k$, we know only a finite number of extremal examples. It would be very interesting
to describe all graphs $G$ with $\sigma_{2}(G)=4k-4$ that do not have $k$ disjoint cycles, but this most likely would need
new techniques and approaches. 

2. Recently, there were several results in the spirit of  the Corr\' adi-Hajnal
Theorem giving degree conditions on a graph $G$ sufficient for the existence in $G$ of $k$ disjoint copies of 
such subgraphs as chorded cycles~\cite{BFG,CFGL} and $\Theta$-graphs~\cite{CFKS}.
It could be that our techniques can help in similar problems.

3. One also may try to sharpen the above-mentioned theorem of Dirac and Erd\H os~\cite{DE}.\\

\noindent\textbf{Notation.} 
A \emph{bud} is a vertex with degree $0$ or $1$.
A vertex is \emph{high} if it has degree at least $2k-1$, and \emph{low} otherwise.
For vertex subsets $A,B$ of a graph $G=(V,E)$, let 
$$\left\Vert A,B\right\Vert :=\sum_{u\in A}|\{uv\in E(G):v\in B\}|.$$
Note $A$ and $B$ need not be disjoint. For example, $\|V,V\|=2\|G\|=2|E|$. We will abuse this notation
to a certain extent. If $A$ is a subgraph of $G$,
we write $\left\Vert A,B\right\Vert $ for $\left\Vert V(A),B\right\Vert $, and if $\mathcal A$ is a set of disjoint subgraphs, we write $\left\Vert \mathcal A,B\right\Vert $ for $\left\Vert \bigcup_{H\in \mathcal A}V(H),B\right\Vert $.
Similarly, for $u\in V(G)$, we write $\left\Vert u,B\right\Vert $
for $\left\Vert \{u\},B\right\Vert $. Formally, an edge $e=uv$ is the set $\{u,v\}$; we often write $\|e,A\|$ for $\|\{u,v\},A\|$.

If  $T$ is a tree or a directed cycle and  $u,v\in V(T)$ we write  $uTv$ for the unique subpath of $T$ with endpoints $u$ and $v$.  We also extend this: if $w\notin T$, but has exactly one neighbor $u\in T$, we write $wTv$ for $w(T+w+wu)v$. Finally, if $w$ has exactly two neighbors $u,v \in T$, we may write $wTw$ for the cycle $wuTvw$. 

\section{Proof of Theorem \ref{main}}

\noindent Suppose $G=(V,E)$ is an edge-maximal counterexample to
Theorem \ref{main}. That is, for some  $k\geq3$, (H1)--(H3) hold, and  $G$
does not contain $k$ disjoint cycles, but adding any edge $e\in E(\overline{G})$
to $G$ results in a graph with $k$ disjoint cycles. The edge $e$
will be in precisely one of these cycles, so $G$ contains $k-1$ disjoint
cycles, and at least three additional vertices. Choose a set $\mathcal{C}$
of disjoint cycles in $G$ so that: 
\begin{enumerate}[label=(O\arabic*), ref=(O\arabic*)]
\item \label{o1}$|\mathcal{C}|$ is maximized; 
\item \label{o2}subject to \ref{o1}, $\sum_{C\in\mathcal{C}}|C|$
is minimized; 
\item \label{o3}subject to \ref{o1} and \ref{o2}, the length of a longest
path $P$ in $R:=G-\bigcup\mathcal{C}$ is maximized;
\item \label{o4} subject to \ref{o1}, \ref{o2}, and \ref{o3}, $\left\Vert R\right\Vert $
is maximized.
\end{enumerate}
Call such a $\mathcal{C}$ an \emph{optimal} \emph{set}. We prove
in Subsection~\ref{path} that $R$ is a path, and in Subsection
\ref{three} that $|R|=3$. We develop the structure of $\mathcal{C}$
in Subsection~\ref{sub:Key-L}. Finally, in
Subsection~\ref{contradiction}, these results are used to prove Theorem~\ref{main}.

Our arguments will have the following form.  We will make a series of claims about our optimal set $\mathcal C$, and then show that if any part of a claim fails, then we could have improved $\mathcal C$ by replacing a sequence  $C_1,\dots,C_t\in\mathcal C$ of at most three cycles by  another sequence of cycles $C'_1,\dots,C'_{t'}$. Naturally, this modification may also change $R$ or $P$. We will express the contradiction by writing ``$C'_1,\dots,C'_t,R',P'$ beats $C_1,\dots,C_t,R,P$,'' and may drop $R'$ and $R$ or $P'$ and $P$ if they are not involved in the optimality criteria.

This proof implies a polynomial time algorithm. We start by adding enough extra edges---at most $3k$---to obtain from $G$ 
a graph with a set $\mathcal C$ of $k$ disjoint cycles. Then we remove the extra edges in $\mathcal C$  one at a time. After removing an extra edge, we calculate a new collection  $\mathcal C'$. This is accomplished by checking the series of claims, each in polynomial time. If a claim fails, we calculate a better collection (again in polynomial time) and restart the check, or discover an independent set of size greater than $|G|-2k$. As there can be at most $n^4$ improvements, corresponding to adjusting the four parameters \ref{o1}--\ref{o4},
this process ends in polynomial time.

We now make some simple observations. Recall that $|\mathcal{C}|=k-1$
and $R$ is acyclic. By \ref{o2} and our initial remarks, $|R|\ge3$.
Let  $a_{1}$ and $a_{2}$ be the endpoints of $P$. (Possibly,
$R$ is an independent set, and $a_{1}=a_{2}$.)

\begin{claim}
\label{clm:1}For all $w\in V(R)$ and $C\in\mathcal{C}$, if $\left\Vert w,C\right\Vert \ge2$
then $3\leq|C|\leq6-\left\Vert w,C\right\Vert $. In particular, (a)
$\left\Vert w,C\right\Vert \leq3$, (b) if $\left\Vert w,C\right\Vert =3$
then $|C|=3$, and (c) if $|C|=4$ then the two neighbors of $w$
in $C$ are nonadjacent. \end{claim}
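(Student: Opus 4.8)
The plan is to prove Claim~\ref{clm:1} by an exchange argument: assuming some cycle $C\in\mathcal{C}$ has $\|w,C\|\ge2$ but violates $3\le|C|\le 6-\|w,C\|$, I would construct a competing set of cycles that beats $\mathcal{C}$ under the optimality ordering \ref{o1}--\ref{o4}, contradicting optimality. The guiding principle is that $w$, together with two of its neighbors on $C$, spans short cycles, and the leftover arcs of $C$ can be reassembled so as to either (a) increase $|\mathcal{C}|$, violating \ref{o1}, or (b) preserve $|\mathcal{C}|$ while decreasing $\sum_{C\in\mathcal{C}}|C|$, violating \ref{o2}.

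First I would fix notation: orient $C$ as a directed cycle and list the neighbors of $w$ on $C$ in cyclic order as $u_1,\dots,u_m$ where $m=\|w,C\|\ge2$. Since $w\in R$ and $R$ is acyclic, $w$ lies outside $C$. The key local observation is that for any two consecutive neighbors $u_i,u_{i+1}$ of $w$ along $C$, the arc $u_i C u_{i+1}$ together with the two edges $wu_i,wu_{i+1}$ closes up into a cycle through $w$ of length $|u_i C u_{i+1}|+1$. I would split into cases according to $m$ and $|C|$. If $m\ge 3$, I want to show $|C|=3$: the $m$ neighbors partition $C$ into $m$ arcs whose lengths sum to $|C|$; if $|C|>3$ then at least one arc has length $\ge2$, and I can form a cycle $wu_i C u_{i+1} w$ of length $\ge3$ that is shorter than $C$ while still being a cycle, and the remaining vertices of $C$ stay available—more carefully, I would extract two disjoint cycles through $w$ and through the rest of $C$ to contradict \ref{o1}, or a single shorter cycle to contradict \ref{o2}. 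This yields part (b), hence $|C|=3$ forces $m\le 3$, giving part (a) since $\|w,C\|\le |C|=3$ in that regime, and the general bound $\|w,C\|\le3$.

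For the main inequality $|C|\le 6-\|w,C\|$ when $\|w,C\|=m$, I would argue that if $|C|$ were too large, then choosing the two consecutive neighbors $u_i,u_{i+1}$ bounding the \emph{shortest} arc between consecutive neighbors gives a short $w$-cycle $wu_iCu_{i+1}w$, and splitting $C$ at these points leaves an arc that, together with the other structure, can be closed into a second cycle—producing $|\mathcal{C}|+1$ cycles and violating \ref{o1}. When only one new cycle is produced but it is strictly shorter than $C$ (for $m=2$, length $|C|$ replaced by something $\le 4$ when $|C|\ge5$), this contradicts \ref{o2} instead. Concretely, for $m=2$ the two arcs have lengths summing to $|C|$, so the shorter has length $\le\lfloor|C|/2\rfloor$, and the resulting $w$-cycle has length $\le\lfloor|C|/2\rfloor+1<|C|$ once $|C|\ge4$; I would verify this beats $\mathcal{C}$. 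Part (c), that the two neighbors of $w$ in a $4$-cycle are nonadjacent, follows because if they were adjacent they would span a triangle with $w$, again giving a shorter cycle and contradicting \ref{o2}.

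The main obstacle I anticipate is bookkeeping the exchange in the borderline cases so that the replacement genuinely beats $\mathcal{C}$ rather than merely tying it: I must ensure the new cycles are disjoint and that, when I claim \ref{o2} is violated, the \emph{total} cycle length strictly drops (not just one cycle shrinks while another grows), and when I claim \ref{o1}, that the two cycles I extract are truly vertex-disjoint and each of length $\ge3$. Handling $m=3,|C|\ge4$ requires care because I want to both shorten and possibly split $C$; I would isolate a triangle $wu_iu_{i+1}w$ using an arc of length $1$ (guaranteed when three neighbors sit on a cycle of length $\ge4$ only if two are adjacent along $C$, which happens precisely when some arc has length $1$) and then route the remaining vertices. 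The cleanest route is likely to phrase everything as ``$C'_1,\dots,C'_{t'},R',P'$ beats $C_1,\dots,C_t,R,P$'' using the notation established just before the claim, checking the single relevant optimality coordinate in each case.
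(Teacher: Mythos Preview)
Your underlying idea---replace $C$ by a cycle through $w$ and two of its neighbors on $C$, then invoke \ref{o2}---is exactly the paper's. But two pieces of your plan do not work as written.

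First, the \ref{o1} route is a dead end. You propose, for $m\ge3$, to ``extract two disjoint cycles through $w$ and through the rest of $C$.'' This is impossible: $w$ can lie in at most one of them, and the complementary piece $C-\{u_i,\dots\}$ is a disjoint union of paths, hence acyclic. No second cycle exists in $C\cup\{w\}$ without reusing $w$. So only \ref{o2} is available.

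Second, your \ref{o2} argument is pointed at the wrong arc and has an off-by-one. To get a cycle strictly shorter than $C$ you must take the \emph{shortest} arc between consecutive neighbors, not one of length $\ge2$; and a $w$-cycle over an arc of $a$ edges has $a+2$ edges, not $a+1$. With those fixes your case analysis does go through, but the paper avoids it entirely with a one-line summing trick: by \ref{o2}, every cycle $wu_i\overrightarrow{C}u_{i+1}w$ has length at least $|C|$; summing the $m$ of them (for consecutive neighbors) uses each edge of $C$ once and each edge $wu_i$ twice, giving
\[
m|C|\ \le\ \sum_{i}\bigl\|wu_i\overrightarrow{C}u_{i+1}w\bigr\|\ =\ |C|+2m,
\]
so $|C|\le 2m/(m-1)$, i.e.\ $|C|\le4$ when $m=2$ and $|C|\le3$ when $m\ge3$. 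This simultaneously yields (a), (b), and the main inequality with no casework. Part (c) then falls out exactly as you say: adjacent neighbors on a $4$-cycle would give a triangle through $w$, violating \ref{o2}.
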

\begin{proof}
Let $\overrightarrow{C}$ be a cyclic orientation of $C$. 
For distinct $u,v\in N(w)\cap C$, the cycles $wu\overrightarrow{C}vw$
and $wu\overleftarrow{{C}}vw$  have length at least $|C|$ by
\ref{o2}. Thus $2\left\Vert C\right\Vert \leq\|wu\overrightarrow{C}vw\|+\|wu\overleftarrow{{C}}vw\|=\left\Vert C\right\Vert +4,$
so $|C|\leq4$. Similarly, if $\left\Vert w,C\right\Vert \geq3$ then
$3\|C\|\le\|C\|+6$, and so $|C| =3$. \end{proof}
The next claim is a simple corollary of condition \ref{o2}.
\begin{claim}
\label{clm1.2}If $xy\in E(R)$ and $C\in\mathcal{C}$ with $|C|\geq4$
then $N(x)\cap N(y)\cap C=\emptyset$.
\end{claim}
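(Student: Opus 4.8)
The plan is a one-line contradiction argument driven by the minimality criterion~\ref{o2}, exactly as the remark preceding the claim suggests. Suppose toward a contradiction that some vertex $v$ lies in $N(x)\cap N(y)\cap C$. Since $xy\in E(R)\subseteq E(G)$ and $vx,vy\in E(G)$, and since $v\in V(C)$ is distinct from $x,y\in V(R)$ (the cycles of $\mathcal C$ are vertex-disjoint from $R$), the three vertices $x,y,v$ span a triangle $T:=xyv$, i.e.\ a cycle of length $3$.

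First I would replace $C$ by $T$, setting $\mathcal C':=(\mathcal C\setminus\{C\})\cup\{T\}$, and check that $\mathcal C'$ is again a family of disjoint cycles: $T$ meets $\bigcup\mathcal C$ only in the single vertex $v$, while $x,y\in V(R)$ belong to no member of $\mathcal C$, so no two cycles of $\mathcal C'$ share a vertex. Then I would compare the optimality parameters. We have $|\mathcal C'|=|\mathcal C|=k-1$, so~\ref{o1} is untouched; and
$$\sum_{D\in\mathcal C'}|D|=\Bigl(\sum_{D\in\mathcal C}|D|\Bigr)-|C|+3<\sum_{D\in\mathcal C}|D|,$$
because $|C|\ge4$. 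Hence $T$ beats $C$ with respect to~\ref{o2}, contradicting the optimality of $\mathcal C$.

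There is no real obstacle here---this is why it is a simple corollary. The only points meriting a moment's care are that $T$ is a genuine $3$-cycle on three distinct vertices (which follows since $v\in V(C)$ while $x,y\in V(R)$) and that $\mathcal C'$ is disjoint. Because the strict gain occurs already at the level of~\ref{o2}, which dominates~\ref{o3} and~\ref{o4}, there is no need to analyze how the path $P$ or the graph $R$ change under the swap.
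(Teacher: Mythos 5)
Your proof is correct and is exactly the argument the paper intends: the paper omits a written proof, calling the claim ``a simple corollary of condition (O2),'' and that corollary is precisely your swap of $C$ for the triangle $xyvx$, which preserves $|\mathcal C|$ but strictly decreases the total length, contradicting (O2). Your side remarks (distinctness of $x,y,v$, disjointness of the new family, and that a strict gain at (O2) makes (O3)/(O4) irrelevant) are all accurate.
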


\subsection{$\mathbf{R}$ is a path\label{path}}

Suppose $R$ is not a path. Let $L$ be the set of buds in $R$; then
$|L|\geq3$.
\begin{claim}
\label{clm:2}For all $C\in\mathcal{C}$, distinct  $x,y,z\in V(C)$, $i\in[2]$, 
and $u\in V(R-P)$:
\begin{enumerate}[label=(\alph{enumi}), ref=\alph{enumi}]
\item \label{clm:2.1} $\{ux,uy,a_{i}z\}\nsubseteq E$;
\item \label{clm:2.2}$\left\Vert \{u,a_{i}\},C\right\Vert \leq4$;
\item \label{clm:2.3} $\{a_{i}x,a_{i}y,a_{3-i}z,zu\}\nsubseteq E$ ; 
\item \label{clm:2.4}if $\left\Vert \{a_{1},a_{2}\},C\right\Vert \geq5$
then $\left\Vert u,C\right\Vert =0$;
\item \label{clm:2.5}$\left\Vert \{u,a_{i}\},R\right\Vert \geq1$; in particular
$\left\Vert a_{i},R\right\Vert =1$ and $|P|\geq2$;
\item \label{clm:2.6}$4-\left\Vert u,R\right\Vert \leq\left\Vert \{u,a_{i}\},C\right\Vert $
and $\left\Vert \{u,a_{i}\},D\right\Vert =4$ for at least $|\mathcal C|-\left\Vert u,R\right\Vert $
cycles $D\in\mathcal{C}$.
\end{enumerate}
\end{claim}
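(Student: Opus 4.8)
The plan is to prove all six parts of Claim~\ref{clm:2} by the same exchange strategy: assume a part fails, then explicitly build a replacement configuration of cycles (plus possibly a longer path or denser remainder) that beats $\mathcal C$ under the lexicographic optimality criteria \ref{o1}--\ref{o4}. The recurring device is that $u\in V(R-P)$ is a bud, so it lies in $L$ and has at most one neighbor inside $R$; meanwhile $a_1,a_2$ are the endpoints of the longest path $P$, and a longest path cannot be extended, so any neighbor of $a_i$ in $R$ already lies on $P$. These two facts let me convert edges from $\{u,a_i\}$ into a cycle (using two or three vertices of some $C\in\mathcal C$) while freeing vertices back into $R$, and then compare sizes.

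First I would treat \ref{clm:2.1}: if $ux,uy,a_iz\in E$ with $x,y,z\in V(C)$ distinct, then $u$ together with the two neighbors $x,y$ spans a $3$-cycle $xuyx$ (using the arc of $C$ if $x,y$ are nonadjacent, but Claim~\ref{clm:1} already forces short cycles), and the remaining vertices of $C$ together with $a_iz$ can be reabsorbed; the resulting configuration uses fewer total cycle-vertices, contradicting \ref{o2}. Part \ref{clm:2.2} follows by summing: if $\|\{u,a_i\},C\|\geq5$ then by Claim~\ref{clm:1} one of $u,a_i$ sends $3$ edges and the other at least $2$ into $C$, which by \ref{clm:2.1}-type reasoning yields two disjoint short cycles inside $C\cup\{u,a_i\}$ from only the vertices of $C$ plus $u,a_i$, again improving the count. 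Parts \ref{clm:2.3} and \ref{clm:2.4} are refinements in the same vein, combining an edge from $a_{3-i}$ or a second high-degree attachment; here I would use $zu\in E$ to route a cycle through $u$ and then rebuild $P$ from $a_1Pa_2$ to gain length or edges.

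The genuinely structural parts are \ref{clm:2.5} and \ref{clm:2.6}. For \ref{clm:2.5} I would argue that if $\|\{u,a_i\},R\|=0$ then $u$ and $a_i$ are isolated-ish in $R$, so $\{u,a_i\}$ is an independent set extendable to a large independent set in $R$; combined with the cycle structure this would push $\alpha(G)$ above $|G|-2k$, contradicting \ref{H3}. The statement that $\|a_i,R\|=1$ and $|P|\geq2$ then follows because an endpoint of a longest path has exactly one neighbor on the path and none off it. Part \ref{clm:2.6} is the quantitative heart: it says that the deficiency of $u$ inside $R$ must be compensated by edges into the cycles, and that this compensation is nearly uniform across $\mathcal C$. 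I would prove it by the Ore condition \ref{H2}: since $u$ is a bud, $d(u)\leq 1$, so its nonneighbor $a_i$ (or any nonneighbor) must have degree at least $4k-3-d(u)$, forcing $a_i$ to be high and to distribute many edges among the $k-1$ cycles; a counting argument then shows $\|\{u,a_i\},D\|=4$ for all but $\|u,R\|$ of the cycles, since each cycle can absorb at most $4$ such edges by \ref{clm:2.2}.

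The main obstacle I anticipate is \ref{clm:2.6}, because it is not a single forbidden configuration but an averaging statement that must hold simultaneously across all cycles; the difficulty is that a local improvement argument only rules out one bad cycle at a time, so I would need to combine the per-cycle bound $\|\{u,a_i\},C\|\leq4$ from \ref{clm:2.2} with the global degree lower bound from \ref{H2} and the independence-number constraint \ref{H3} to force the edges to saturate almost every cycle. Getting the exact threshold ``$|\mathcal C|-\|u,R\|$'' right, rather than an off-by-one weaker bound, is where the careful bookkeeping between the Ore-degree $4k-3$, the cycle count $k-1$, and the slack in $d(u)$ will be most delicate.
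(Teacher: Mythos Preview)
Your overall exchange strategy is right in spirit, but several parts rest on mistaken premises.

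First, you repeatedly assume $u$ is a bud. The hypothesis is only $u\in V(R-P)$; nothing forces $d_R(u)\le 1$. (Later, in Claims~\ref{clm:5} and~\ref{clm:6}, $R$ may be a subdivided star with a high-degree center off $P$.) In particular your opening line for \ref{clm:2.6}, ``since $u$ is a bud, $d(u)\le1$,'' conflates $d_R(u)$ with $d_G(u)$ and is false in both readings.

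Second, your argument for \ref{clm:2.1} invokes the wrong optimality criterion. Replacing $C$ by a cycle through $u$ and the arc of $C-z$ does \emph{not} reduce the total number of cycle vertices: the new cycle has exactly $|C|$ vertices (one vertex $z$ leaves, one vertex $u$ enters). What improves is the path: $z$ can be appended to $P$ at $a_i$, giving a strictly longer path in the new $R$, so the contradiction is with \ref{o3}, not \ref{o2}. The same applies to \ref{clm:2.3}, where the freed vertex $z$ together with $u$ extends $P-a_i$ at the $a_{3-i}$ end.

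Third, and most seriously, your plan for \ref{clm:2.5} via \ref{H3} does not work. Knowing only that $u$ and $a_i$ have no neighbours in $R$ gives you an independent pair, not an independent set of size $|G|-2k+1$; there is no mechanism here to grow it. The actual argument is pure degree counting from \ref{H2}: since $a_i$ is an endpoint of a longest path and $u\notin P$, we have $ua_i\notin E$, so $d(u)+d(a_i)\ge 4k-3$; combined with \ref{clm:2.2} this gives
\[
4(k-1)\ \ge\ \|\{u,a_i\},V\smallsetminus R\|\ \ge\ (4k-3)-\|\{u,a_i\},R\|,
\]
forcing $\|\{u,a_i\},R\|\ge1$. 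The conclusions $|P|\ge2$ and $\|a_i,R\|=1$ then follow because $R$ is acyclic (so $a_i$ has no chord into $P$) and $N(a_i)\cap R\subseteq P$.

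Once \ref{clm:2.5} is in hand, \ref{clm:2.6} is exactly the averaging you describe at the end---summing $\|\{u,a_i\},D\|\le4$ over $D\in\mathcal C$ against the lower bound $4|\mathcal C|-\|u,R\|$ coming from the displayed inequality and $\|a_i,R\|=1$---with no role for \ref{H3}.
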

\begin{proof}
(\ref{clm:2.1}) Else $ux(C-z)yu,Pa_{i}z$ beats $C,P$
by \ref{o3} (see Figure \ref{subfig1.1}).\\
(\ref{clm:2.2}) Else $|C|=3$ by Claim~\ref{clm:1}.
Then there are distinct $p,q,r\in V(C)$ with $up,uq,a_{i}r\in E$,
contradicting (\ref{clm:2.1}).\\
(\ref{clm:2.3}) Else $a_{i}x(C-z)ya_{i},(P-a_{i})a_{3-i}zu$ beats $C,P$ by \ref{o3} (see Figure \ref{subfig1.2}).\\
(\ref{clm:2.4}) Suppose
$\left\Vert \{a_{1},a_{2}\},C\right\Vert \geq5$ and $p\in N(u)\cap C$.
By Claim~\ref{clm:1}, $|C|=3$. Pick $j\in[2]$ with $pa_j\in E$, preferring $\|a_j,C\|=2$. 
Then
 $V(C)-p\subseteq N(a_{3-j})$, contradicting (\ref{clm:2.3}).\\
(\ref{clm:2.5}) Since $a_{i}$ is an end of the maximal path $P$, we get
$N(a_{i})\cap R\subseteq P$; so $a_{i}u\notin E$. By (\ref{clm:2.2})
\begin{equation}
4(k-1)\geq\left\Vert \{u,a_{i}\},V\smallsetminus R\right\Vert \geq4k-3-\left\Vert \{u,a_{i}\},R\right\Vert .\label{2.1}
\end{equation}
Thus $\left\Vert \{u,a_{i}\},R\right\Vert \geq1$. Hence $G[R]$ has
an edge, $|P|\geq2$, and $\left\Vert a_{i},P\right\Vert =\left\Vert a_{i},R\right\Vert =1$.\\
(\ref{clm:2.6}) By (\ref{2.1}) and (\ref{clm:2.5}),
%
 $\left\Vert \{u,a_{i}\},V\smallsetminus R\right\Vert \geq4|\mathcal{C}|-\left\Vert u,R\right\Vert $. Using (\ref{clm:2.2}), this implies the second assertion, and
$\|\{u,a_i\},C\| + 4(|\mathcal C|-1) \geq 4|\mathcal C| - \|u,R\|$
implies the first assertion. 
\end{proof}

\begin{figure}[ht]

\subcaptionbox{\label{subfig1.1}}
{\begin{tikzpicture}
\draw (0,0) circle (1cm);
\draw (0,-1) node[vertex, label=$y$,draw](y){};
\draw (-.707,-.707) node[vertex, label=$x$,draw](x){};
\draw (.707,-.707) node[vertex, label=$z$,draw](z){};
\draw (.7,-2) node[vertex, 
label=below:{$a_i$},draw](a){};
\draw (2.3,-2) node[vertex](b){};
\draw (a) -- (b);
\draw[dotted, line width=2pt] (a)--(b);
\draw (-.5,-2) node[vertex, label=below:{$u$}, draw](u){};
\draw[dotted, line width=2pt] (x) -- (u) -- (y);
\draw[dotted, line width=2pt]  (z) -- (a);
\draw[dotted, line width=2pt] (x) arc (225:270:1cm);
\end{tikzpicture}}
\hspace{1cm}
\subcaptionbox{\label{subfig1.2} }
{\begin{tikzpicture}
\draw (0,0) circle (1cm);
\draw (0,-1) node[vertex, label=$y$,draw](y){};
\draw (-.707,-.707) node[vertex, label=$z$,draw](z){};
\draw (.707,-.707) node[vertex, label=$x$,draw](x){};
\draw (1.3,-2) node[vertex, label=below:$a_i$,draw](a){};
\draw (-.5,-2) node[vertex, label=below:$a_{3-i}$, draw](b){};
\draw (a) -- (b);
\draw (-1.5,-2) node[vertex, label=below:$u$, draw](u){};
\draw (.75,-2) node[vertex](p){};
\draw[dotted, line width=2pt] (x) -- (a) -- (y);
\draw[dotted, line width=2pt]  (p) -- (b) -- (z) -- (u);
\draw[dotted, line width=2pt] (y) arc (270:315:1cm);
\end{tikzpicture}}

\caption{Claim \ref{clm:2}}
\end{figure}
 
\begin{claim}
\label{clm:4.6} $|P|\geq3$. In particular, $a_{1}a_{2}\not\in E(G)$. \end{claim}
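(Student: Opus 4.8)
The plan is to argue by contradiction. We already know $|P|\ge2$ by (\ref{clm:2.5}), so suppose $|P|=2$; then $P=a_1a_2$ and $a_1a_2\in E$. Since $R$ is a forest whose longest path has only two vertices, $R$ has maximum degree at most $1$, so $R$ is a disjoint union of edges and isolated vertices, and by (\ref{clm:2.5}) the edge $a_1a_2$ is a whole component. As $R$ is not a path, $V(R-P)\neq\emptyset$; I would choose $u\in V(R-P)$, preferring an isolated vertex of $R$ when one exists. By (\ref{clm:2.5}) we have $a_iu\notin E$ and $\|u,R\|\le1$, and applying (\ref{clm:2.6}) to both $a_1$ and $a_2$ shows that all but at most $2\|u,R\|$ cycles $C\in\mathcal C$ satisfy $\|\{a_1,u\},C\|=\|\{a_2,u\},C\|=4$. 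I will call these \emph{good} cycles. If $u$ is isolated there are $k-1\ge2$ good cycles, and if $k\ge4$ there are at least $k-3\ge1$.

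Fix a good cycle $C$. Then $\|a_1,C\|=\|a_2,C\|=4-\|u,C\|$, and Claim~\ref{clm:1} together with (\ref{clm:2.4}) rules out $\|u,C\|\in\{0,1\}$: if $\|u,C\|=0$ then $\|a_1,C\|=4>3$, and if $\|u,C\|=1$ then $|C|=3$ and $\|\{a_1,a_2\},C\|=6\ge5$, forcing $\|u,C\|=0$ by (\ref{clm:2.4}). Hence $\|u,C\|\in\{2,3\}$, and in each case I would exhibit a replacement that beats $\mathcal C$.

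If $\|u,C\|=2$, let $p,q$ be the neighbors of $u$ in $C$ (nonadjacent in $C$ when $|C|=4$, by Claim~\ref{clm:1}); by (\ref{clm:2.1}) both $a_1$ and $a_2$ are then adjacent to exactly $p$ and $q$. When $|C|=4$, replacing $C$ by the triangle $a_1a_2p$ keeps $|\mathcal C|$ but lowers $\sum_{C}|C|$, so it beats $\mathcal C$ by \ref{o2}. When $|C|=3$, say $C=pqr$, replacing $C$ by the triangle $a_1a_2p$ frees $q$ and $r$; since $uq,qr\in E$, the path $uqr$ lives in the new $R'$, and this beats $\mathcal C$ by \ref{o3}. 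If instead $\|u,C\|=3$, then $|C|=3$ and $u$ is adjacent to all of $C=pqr$ while $a_1$ has a unique neighbor $w\in\{p,q,r\}$; replacing $C$ by the triangle on $u$ and the two vertices of $C$ other than $w$ frees $w$, and $wa_1a_2$ is a path in the new $R'$, again beating $\mathcal C$ by \ref{o3}. Each case contradicts optimality, so $|P|\ge3$; since $R$ is acyclic, this forces $a_1a_2\notin E$.

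The step I expect to be the main obstacle is guaranteeing that a good cycle exists. This is automatic whenever $R$ has an isolated vertex (take it as $u$, making every cycle good) and whenever $k\ge4$. The one delicate situation is $k=3$ with $R$ a perfect matching, where there are only two cycles and it may happen that neither is good for both $a_1$ and $a_2$. I expect to dispose of this case by exploiting the matching edge $uu'$ containing our chosen $u$: using (\ref{clm:2.6}) for $a_1$ and $a_2$ with respect to both $u$ and $u'$ and using (\ref{clm:2.1}) to pin down the neighborhoods in the relevant triangle, one builds two disjoint cycles out of that triangle together with $u,u',a_1,a_2$ (for instance a triangle $uu'q$ and a four-cycle through $a_1,a_2$ and two triangle vertices), which with the remaining cycle yields three disjoint cycles and contradicts \ref{o1}.
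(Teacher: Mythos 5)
Your core case analysis is correct and is essentially the paper's own proof: take a bud $u\in V(R-P)$, use Claim~\ref{clm:2}(\ref{clm:2.6}) to locate a cycle $C$ receiving many edges from $\{u,a_1,a_2\}$, rule out $\left\Vert u,C\right\Vert \in\{0,1\}$ via Claim~\ref{clm:1} and Claim~\ref{clm:2}(\ref{clm:2.4}), and for $\left\Vert u,C\right\Vert \in\{2,3\}$ trade $C$ for a triangle through the edge $a_1a_2$ (or through $u$), winning by \ref{o2} or \ref{o3}; these are exactly the replacements the paper makes. The genuine gap is the case you flag yourself: $k=3$ with $R$ a disjoint union of edges. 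There your notion of good cycle (equality $4$ for \emph{both} pairs $\{u,a_1\}$, $\{u,a_2\}$ on the same cycle) need not be attainable, and your treatment of that case is an unexecuted sketch, so the proof is incomplete as written. Worse, the configuration the sketch proposes cannot occur: taking $C_1$ with $\left\Vert \{u,a_1\},C_1\right\Vert =4$, so that $\left\Vert \{u,a_2\},C_1\right\Vert =3$ because $C_1$ is not good, the eliminations you already use (adapted to these asymmetric counts) force $\left\Vert u,C_1\right\Vert =\left\Vert a_1,C_1\right\Vert =2$ and $\left\Vert a_2,C_1\right\Vert =1$, and Claim~\ref{clm:2}(\ref{clm:2.1}) then forces $N(u)\cap C_1=N(a_1)\cap C_1=\{p,q\}$ and, say, $N(a_2)\cap C_1=\{p\}$. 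Consequently any four-cycle through $a_1,a_2$ and two vertices of $C_1$ must use both $p$ and $q$, while any triangle through $uu'$ inside $G[C_1\cup\{u,u',a_1,a_2\}]$ must use $p$ or $q$; so your proposed pair of disjoint cycles does not exist (the only realizable pair is the two triangles $a_1a_2p$ and $uu'q$).

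In fact, once that profile is pinned down, the special case closes by your own replacements with no use of $u'$ at all: the triangle $a_1a_2pa_1$ beats $C_1$ by \ref{o2} when $|C_1|=4$, and together with the path $uqr$ (writing $C_1=pqr$) it beats $C_1,P$ by \ref{o3} when $|C_1|=3$. This is precisely how the paper avoids any case split on $k$ or on isolated vertices: it demands only $\left\Vert \{c,a_1\},C\right\Vert =4$ for a bud $c\notin\{a_1,a_2\}$, which Claim~\ref{clm:2}(\ref{clm:2.6}) supplies for every $k\ge3$ since $\left\Vert c,R\right\Vert \le1$, and then the first inequality of that same claim gives $\left\Vert \{c,a_2\},C\right\Vert \ge3$ on that cycle for free. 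Your case analysis survives this weakening essentially verbatim: $\left\Vert c,C\right\Vert =1$ still gives $\left\Vert \{a_1,a_2\},C\right\Vert \ge5$, which is all Claim~\ref{clm:2}(\ref{clm:2.4}) needs, and in the case $\left\Vert c,C\right\Vert =2$ you get $N(a_1)\cap C=\{p,q\}$ and $\emptyset\ne N(a_2)\cap C\subseteq\{p,q\}$, which still yields the common neighbor needed for the triangle. Had you defined good cycles this asymmetric way, no counting of good cycles and no special case would have been needed.
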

\begin{proof}
Suppose $|P| \leq 2$. Then  $\|u,R\|\le 1$. As $|L|\geq3$, there is a
bud $c\in L\smallsetminus\{a_{1},a_{2}\}$. By Claim~\ref{clm:2}(\ref{clm:2.6}),
 there exists $C=z_{1}\dots z_{t}z_{1}\in\mathcal{C}$ such that  $\left\Vert \{c,a_{1}\},C\right\Vert =4$ and $\left\Vert \{c,a_{2}\},C \right\Vert \ge3$.

If $\left\Vert c,C\right\Vert =3$ then the edge between $a_1$ and $C$ 
 contradicts Claim~\ref{clm:2}(\ref{clm:2.1}). If $\left\Vert c,C\right\Vert =1$ then $\left\Vert \{a_1,a_2\},C\right\Vert =5$, contradicting Claim~\ref{clm:2}(\ref{clm:2.4}).
Therefore, we assume $\left\Vert c,C\right\Vert =2=\left\Vert a_{1},C\right\Vert $ and $\left\Vert a_{2},C\right\Vert\ge1$. By Claim~\ref{clm:2}(\ref{clm:2.1}),
$N(a_{1})\cup N(a_{2})= N(c)$, so
there exists $z_{i}\in N(a_{1})\cap N(a_{2})$ and $z_{j}\in N(c)-z_{i}$.
Then $a_{1}a_{2}z_{i}a_{1},cz_{j}z_{j\pm1}$ beats $C,P$
by \ref{o3}.\end{proof}
\begin{claim}
\label{at_most_3} Let  $c\in L-a_{1}-a_{2}$, $C\in\mathcal{C}$,
and $i\in[2]$. 
\begin{enumerate}[label=(\alph{enumi}), ref=\alph{enumi}]
\item \label{clm:3.1}$\left\Vert a_{1},C\right\Vert =3$ if and only if
$\left\Vert c,C\right\Vert =0$, and if and only if $\left\Vert a_{2},C\right\Vert =3$. 
\item \label{clm:3.2}There is at most one cycle $D\in\mathcal{C}$ with
$\left\Vert a_{i},D\right\Vert =3$. 
\item \label{clm:3.3}For every $C\in\mathcal{C}$, $\left\Vert a_{i},C\right\Vert \geq1$
and $\left\Vert c,C\right\Vert \leq2$.
\item \label{clm:3.4}If $\left\Vert \{a_{i},c\},C\right\Vert =4$ then
$\left\Vert a_{i},C\right\Vert =2=\left\Vert c,C\right\Vert .$
\end{enumerate}
\end{claim}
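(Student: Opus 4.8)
The plan is to prove each of the four parts of Claim~\ref{at_most_3} by the standard ``beats'' swapping argument, leaning heavily on Claim~\ref{clm:2} and the counting inequality \eqref{2.1}, which now applies with $\|u,R\|$ replaced by the cleaner bound coming from Claim~\ref{clm:4.6} (namely $|P|\ge3$, so the endpoints $a_i$ have exactly one neighbor in $R$). Throughout I would fix a cyclic orientation $\overrightarrow{C}$ and exploit Claim~\ref{clm:1}: any $w\in V(R)$ has $\|w,C\|\le 3$, with equality forcing $|C|=3$.

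\medskip

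\noindent\textbf{Part (\ref{clm:3.3}).} I would start here since it is the engine for the rest. Apply Claim~\ref{clm:2}(\ref{clm:2.6}) with the pair $\{u,a_i\}=\{c,a_i\}$ (legitimate because $c\in V(R-P)$ and $\|c,R\|\le 1$ as $c$ is a bud). This gives $\|\{c,a_i\},D\|=4$ for all but at most one cycle $D$, and more importantly the first assertion $4-\|c,R\|\le\|\{c,a_i\},C\|$ for every $C$. Combined with $\|c,R\|\le 1$ this yields $\|\{c,a_i\},C\|\ge 3$, so in particular $\|a_i,C\|\ge 1$ for every $C$; and if some $\|c,C\|\ge 3$ then $|C|=3$ and $\|\{c,a_i\},C\|\ge 3+1=4$ would force three common-type edges that I expect to contradict Claim~\ref{clm:2}(\ref{clm:2.1}) (two edges $cx,cy$ together with an edge $a_iz$). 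So $\|c,C\|\le 2$.

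\medskip

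\noindent\textbf{Parts (\ref{clm:3.1}), (\ref{clm:3.4}), (\ref{clm:3.2}).} For (\ref{clm:3.1}): if $\|a_1,C\|=3$ then $|C|=3$ and $V(C)\subseteq N(a_1)$, so any edge from $c$ to $C$ gives $cx,a_1y,a_1z$ contradicting Claim~\ref{clm:2}(\ref{clm:2.1}); hence $\|c,C\|=0$, and then the counting bound $\|\{c,a_2\},C\|\ge 4-\|c,R\|\ge 3$ with $\|c,C\|=0$ forces $\|a_2,C\|=3$. The converse directions follow symmetrically by swapping $a_1,a_2$ and using that $\|c,C\|=0$ pushes all the weight onto the endpoints. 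Part (\ref{clm:3.4}) is then immediate: if $\|\{a_i,c\},C\|=4$ then by (\ref{clm:3.3}) neither summand exceeds $3$ and $\|c,C\|\le 2$; the case $\|a_i,C\|=3$ is excluded because (\ref{clm:3.1}) would force $\|c,C\|=0$, leaving total $3\ne 4$, so the only split is $2+2$. Part (\ref{clm:3.2}) I would prove by contradiction: if two cycles $D_1,D_2$ both had $\|a_i,D_1\|=\|a_i,D_2\|=3$, then both are triangles entirely dominated by $a_i$, and I would build a shorter or more numerous cycle collection—the natural move is to route $a_i$ through one triangle and use the freed vertices to contradict \ref{o1} or \ref{o2}, producing a ``beats'' contradiction.

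\medskip

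\noindent The main obstacle I anticipate is part (\ref{clm:3.2}): unlike the others it is not a pointwise local contradiction via Claim~\ref{clm:2}(\ref{clm:2.1}), but a global counting/swapping statement. The delicate point is showing that having two $a_i$-dominated triangles lets us genuinely improve $\mathcal C$ under the lexicographic optimality \ref{o1}--\ref{o4}; I would need to track carefully which parameter improves (most likely using the extra structure that $a_i$ has a neighbor in $R$ along $P$, via Claim~\ref{clm:2}(\ref{clm:2.5}), to splice a path segment into one triangle and recover two disjoint cycles from the combined vertices). The other three parts should be routine given (\ref{clm:3.3}) and Claim~\ref{clm:2}.
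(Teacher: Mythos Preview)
There are two concrete errors. In part~(\ref{clm:3.1}) you invoke Claim~\ref{clm:2}(\ref{clm:2.1}) on the configuration $cx,a_1y,a_1z$, but that claim forbids $\{ux,uy,a_iz\}$ with \emph{two} edges from the off-path bud $u$ and \emph{one} from $a_i$; the roles are not symmetric, because the underlying swap builds a cycle through $u$ (needing two $u$-edges) and extends $P$ past $a_i$ (needing one $a_i$-edge). With the roles reversed there is no improvement to $\mathcal C$. The paper instead reaches $\|c,C\|=0$ in three steps: Claim~\ref{clm:2}(\ref{clm:2.2}) gives $\|c,C\|\le 1$; then Claim~\ref{clm:2}(\ref{clm:2.6}) applied to $\{c,a_{3-i}\}$ gives $\|a_{3-i},C\|\ge 2$, so $\|\{a_1,a_2\},C\|\ge 5$; and Claim~\ref{clm:2}(\ref{clm:2.4}) finishes.

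Your argument for~(\ref{clm:3.3}) is circular: from $\|\{c,a_i\},C\|\ge 3$ you conclude $\|a_i,C\|\ge 1$, but that inference needs $\|c,C\|\le 2$, which you then try to prove using $\|a_i,C\|\ge 1$ (the ``$+1$'' in your $3+1$). In fact when $\|c,C\|=3$, Claim~\ref{clm:2}(\ref{clm:2.1}) forces $\|\{a_1,a_2\},C\|=0$, and the contradiction comes from a \emph{global} degree count on $\{a_1,a_2\}$ over all of $\mathcal C$ (using Claim~\ref{clm:4.6} and Claim~\ref{clm:2}(\ref{clm:2.4})), not from a local swap at $C$. Finally, you flag~(\ref{clm:3.2}) as the hard part needing a swap; once~(\ref{clm:3.1}) is established it is a one-line consequence of Claim~\ref{clm:2}(\ref{clm:2.6}): the equivalence $\|a_i,D\|=3\iff\|c,D\|=0$ reduces~(\ref{clm:3.2}) to bounding the number of cycles $D$ with $\|c,D\|=0$, and any such $D$ has $\|\{c,a_i\},D\|\le 3<4$, so there are at most $\|c,R\|\le 1$ of them. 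This is why the paper orders the proof (a), (b), (c), (d) rather than starting with~(c).
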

\begin{proof}
(\ref{clm:3.1}) If $\left\Vert c,C\right\Vert =0$ then by Claims~\ref{clm:1} and \ref{clm:2}(\ref{clm:2.6}),
$\left\Vert a_{i},C\right\Vert =3$. If $\left\Vert a_{i},C\right\Vert \geq3$
then by Claim~\ref{clm:2}(\ref{clm:2.2}), $\left\Vert c,C\right\Vert \leq1$. 
By Claim~\ref{clm:2}(\ref{clm:2.6}), $\left\Vert a_{3-i},C\right\Vert \geq2$, and 
by Claim~\ref{clm:2}(\ref{clm:2.4}), $\left\Vert c,C\right\Vert =0$.

(\ref{clm:3.2}) As  $c\in L$, $\left\Vert c,R\right\Vert \le 1$. Thus Claim~\ref{clm:2}(\ref{clm:2.6}) implies $\left\Vert c,D\right\Vert =0$
 for at most one cycle $D\in\mathcal{C}$.

(\ref{clm:3.3})
Suppose $\left\Vert c,C\right\Vert =3$. By Claim~\ref{clm:2}(\ref{clm:2.1}),
$\left\Vert \{a_{1},a_{2}\},C\right\Vert =0$. By Claims~\ref{clm:4.6}
and \ref{clm:2}(\ref{clm:2.4}): 
\[
4k-3\leq\left\Vert \{a_{1},a_{2}\},R\cup C\cup(V-R-C)\right\Vert \leq2+0+4(k-2)=4k-6,
\]
a contradiction. Thus $\left\Vert c,C\right\Vert \leq2$. Thus by Claim~\ref{clm:2}(\ref{clm:2.6}),
$\left\Vert a_{i},C\right\Vert \geq1$. 
 
 (\ref{clm:3.4})  Now (\ref{clm:3.4}) follows from (\ref{clm:3.1}) and \eqref{clm:3.3}.\end{proof}
\begin{claim}
\textcolor{red}{\label{Cl9/2}}$R$ has no isolated vertices.\end{claim}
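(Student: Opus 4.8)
The plan is to show that if $R$ has an isolated vertex, then we can find an independent set of size exceeding $|G|-2k$, contradicting (H3), or else improve the optimal set $\mathcal C$. We are in the setting where $R$ is not a path, so $|L|\ge3$, and by Claim~\ref{clm:4.6} we have $|P|\ge3$, so $a_1\neq a_2$ and there is at least one bud $c\in L-a_1-a_2$. An isolated vertex $u$ of $R$ is a bud with $\|u,R\|=0$; by Claim~\ref{clm:2}(\ref{clm:2.6}) applied with this $u$ and either endpoint $a_i$, we get $\|\{u,a_i\},D\|=4$ for \emph{all} $|\mathcal C|$ cycles $D$ (since $\|u,R\|=0$), so in particular $\|\{u,a_i\},C\|=4$ for every $C\in\mathcal C$, forcing by Claim~\ref{at_most_3}(\ref{clm:3.4}) that $\|u,C\|=2=\|a_i,C\|$ for every $C$.

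First I would extract the local structure around $u$ on a single cycle. Fix any $C=z_1\dots z_tz_1\in\mathcal C$. Since $\|u,C\|=2$, Claim~\ref{clm:1} gives $|C|\in\{3,4\}$; and since $\|a_i,C\|=2$ as well while $\|\{a_1,a_2\},C\|\le4$, I would use the adjacency restrictions from Claim~\ref{clm:2} to pin down how $N(u)\cap C$, $N(a_1)\cap C$, and $N(a_2)\cap C$ sit relative to each other. The key leverage is Claim~\ref{clm:2}(\ref{clm:2.1}), which forbids configurations of the form $\{ux,uy,a_iz\}$: with $u$ having two neighbors on $C$ and $a_i$ also having neighbors on $C$, this severely constrains the overlap, essentially forcing $N(a_1)\cap C$ and $N(a_2)\cap C$ to avoid the remaining vertices in a way that lets us swap $u$ into $C$.

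The main engine will be a swap that replaces $u$ (isolated, hence contributing nothing to $R$) for a vertex of some cycle $C$, thereby producing a new collection $\mathcal C'$ whose leftover graph $R'$ has strictly more edges or a longer path, beating $\mathcal C$ under (O3) or (O4). Concretely, because $\|u,C\|=2$ with neighbors say $z_p,z_q$, I would form the cycle $uz_p\overrightarrow C z_qu$ (choosing the arc of length matching $|C|$ so as not to violate (O2)) and push the displaced cycle-vertices into $R$; since $u$ was isolated, $a_i$ remains an endpoint of a path of the same length while $R$ gains edges from the displaced vertices' adjacencies, or the displaced vertices extend $P$. The delicate point, and the main obstacle I anticipate, is verifying that after the swap the new leftover $R'$ is genuinely better in the lexicographic order (O1)--(O4): I must confirm that $|\mathcal C'|=|\mathcal C|$ and $\sum|C'|$ is not increased (this is where the length-matching choice of arc and Claim~\ref{clm:1}'s bound $|C|\le 4$ matter), and that the gain shows up at the level of (O3) or (O4).

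An alternative route, which I would keep in reserve if the swap analysis is awkward, is to argue directly that an isolated vertex $u$ together with the other buds and a suitably chosen independent transversal of the cycles forms a large independent set. Since $\|u,C\|=2$ for every cycle and the cycles have length $3$ or $4$, each cycle still contains a vertex nonadjacent to $u$, and combining such vertices with the buds of $R$ could push $\alpha(G)$ above $|G|-2k$, contradicting (H3). I expect the swap argument to be cleaner and to be the intended one, so I would develop that first and invoke the independence-number contradiction only if a residual case resists the swap.
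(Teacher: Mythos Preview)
Your setup is correct and matches the paper: from $\|u,R\|=0$ you get $\|\{u,a_i\},C\|=4$ for every $C\in\mathcal C$, hence $\|u,C\|=2=\|a_i,C\|$ by Claim~\ref{at_most_3}(\ref{clm:3.4}), and Claim~\ref{clm:2}(\ref{clm:2.1}) then forces $N(a_i)\cap C=N(u)\cap C$ exactly (not merely ``severely constrained''). The swap idea is also the right one. What is missing is the reason the displaced vertex improves $R$.

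The gap is this: you never explain why the vertex $w$ you push into $R$ has any edge to $R$ or can extend $P$. The paper fills this in with a degree count. Since $\|u,C\|=2$ for all $k-1$ cycles and $\|u,R\|=0$, we have $d(u)=2(k-1)$. Now pick $w\in V(C)\smallsetminus N(u)$; since $uw\notin E$, (H2) gives $d(w)\ge 4k-3-d(u)=2k-1=2|\mathcal C|+1$. By pigeonhole, either $\|w,R\|\ge1$ or $\|w,D\|\ge3$ for some $D\in\mathcal C$. In the first case the swap $C\mapsto u(C-w)u$ leaves $|\mathcal C|$ and $\sum|C|$ unchanged (the new cycle has length $|C|$), $P$ is untouched since $u$ was isolated, and $\|R'\|>\|R\|$, beating $\mathcal C$ by \ref{o4}. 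In the second case $|D|=3$; take $x\in N(a_1)\cap D$ (exists by Claim~\ref{at_most_3}(\ref{clm:3.3})) and replace $C,D$ by $u(C-w)u,\,w(D-x)w$. The displaced vertex $x$ is adjacent to $a_1$, so $xP$ is a longer path, beating $\mathcal C$ by \ref{o3}. Your proposal hovers around this swap but treats ``$R$ gains edges from the displaced vertices' adjacencies'' as an assertion rather than something to be proved; the degree computation for $w$ is the missing idea. The independent-set alternative is not needed here.
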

\begin{proof}
Suppose $c\in L$ is isolated. Fix $C\in\mathcal{C}$.
By Claim~\ref{clm:2}(\ref{clm:2.6}), 
 $\left\Vert \{c,a_{1}\},C\right\Vert =4$.
By Claim~\ref{at_most_3}(\ref{clm:3.4}), $\left\Vert a_{1},C\right\Vert =2=\left\Vert c,C\right\Vert $; so $d(c)=2(k-1)$.
By Claim~\ref{clm:2}(\ref{clm:2.1}), $N(a_{1})\cap C=N(c)\cap C$.
Let $w\in V(C)\smallsetminus N(c)$. Then $d(w)\geq4k-3-d(c)=2k-1 = 2|\mathcal C|+1$.
Therefore, either $\|w,R\| \geq 1$ or $|N(w)\cap D|=3$ for some
$D\in\mathcal{C}$. In the first case, $c(C-w)c$ beats $C$
by \ref{o4}. 
In the second case, by Claim~\ref{at_most_3}(\ref{clm:3.3}) there exists some $x\in N(a_{1})\cap D$. Then $c(C-w)c,w(D-x)w$
 beats $C,D$ by \ref{o3}. \end{proof}
\begin{claim}
\label{clm:4.5} $L$ is an independent set.\end{claim}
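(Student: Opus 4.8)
The claim to prove is that $L$, the set of buds in $R$, is an independent set. By the preceding claims I have strong control over how buds attach to the cycles in $\mathcal C$: each bud $c$ (other than possibly $a_1,a_2$) satisfies $\|c,C\|\le 2$ for every $C$, and by Claim~\ref{Cl9/2} $R$ has no isolated vertices, so every bud has degree exactly $1$ in $R$. The plan is to suppose for contradiction that two buds $c,c'\in L$ are adjacent, i.e.\ $cc'\in E(R)$, and derive a contradiction with one of the optimality conditions.

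\medskip

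\noindent\textbf{Approach.} Suppose $cc'\in E(R)$ with $c,c'\in L$. Since both are buds with a neighbor in $R$, their only neighbor in $R$ is each other, so $\|c,R\|=\|c',R\|=1$ and all remaining degree must go to the cycles. First I would apply Claim~\ref{clm:2}(\ref{clm:2.6}) with $u=c$ (noting $c\in V(R-P)$, since a bud adjacent only to another bud cannot be an endpoint of the longest path $P$, which has $|P|\ge3$ by Claim~\ref{clm:4.6}) to get that $\|\{c,a_i\},D\|=4$ for all but at most $\|c,R\|=1$ cycles $D$, and similarly the degree bound $4-\|c,R\|\le \|\{c,a_i\},C\|$ for each $C$. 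Combined with Claim~\ref{at_most_3}(\ref{clm:3.4}), this pins down $\|c,C\|=2=\|a_i,C\|$ for almost every cycle, and by Claim~\ref{clm:2}(\ref{clm:2.1}) the neighborhoods $N(c)\cap C$ and $N(a_i)\cap C$ coincide. The same analysis applies to $c'$.

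\medskip

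\noindent\textbf{The contradiction.} Because $c$ and $c'$ are adjacent in $R$, I would examine a cycle $C$ on which both have two neighbors. By Claim~\ref{clm1.2}, since $cc'\in E(R)$ and $|C|\ge4$ forces $N(c)\cap N(c')\cap C=\emptyset$, the four neighbors $N(c)\cap C$ and $N(c')\cap C$ are distinct; but each of $c,c'$ sees the \emph{same} two vertices as $a_1$ (by the neighborhood-coincidence above), which would force $N(c)\cap C=N(a_1)\cap C=N(c')\cap C$, contradicting disjointness unless $|C|=3$. So the cycles in question must be triangles $C=z_1z_2z_3z_1$, and I would then build a swap: using the edge $cc'$ together with two neighbors on such a triangle, form a short cycle through $c,c'$ and a vertex of $C$, freeing up vertices of $C$ to either lengthen $P$ (beating by \ref{o3}) or increase $\|R\|$ (beating by \ref{o4}). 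Concretely, if $z_i\in N(c)$ and $z_j\in N(c')$ with $i\ne j$, then $cc'z_jc$ (if $c'z_j, z_jc$... ) — more robustly, $cz_ic'c$ uses only edges $cz_i$, $z_ic'$?—I would instead replace $C$ by the triangle $c z_i c' \, c$ via the path $c\,c'$ plus a common-vertex argument, leaving the remaining two vertices of $C$ in the new remainder $R'$, which then properly contains the edge $cc'$ plus extra cycle-vertices, beating $C,P$ or $C,R$.

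\medskip

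\noindent\textbf{Main obstacle.} The delicate part will be handling the triangle case cleanly: once forced to $|C|=3$ everywhere, I must verify that a genuine improving swap exists rather than merely a reshuffling that ties the optimality parameters. The key will be choosing which of \ref{o3} (lengthen the path) or \ref{o4} (increase edges in $R$) to beat, and checking that the newly created cycle through $cc'$ is legitimately vertex-disjoint from the rest of $\mathcal C$ while the two liberated triangle-vertices enlarge $R$ or extend $P$. I expect that the adjacency $cc'$ together with $a_1$ seeing the same triangle-neighbors gives a triangle $a_1 z_i c\, a_1$ or $cc'z\,c$ that, paired with the leftover path, strictly beats the current configuration; pinning down the exact beating configuration and confirming no optimality criterion is violated upward is where the real care is needed.
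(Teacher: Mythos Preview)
Your general direction is sound, but the argument has a real gap and is more convoluted than needed.

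The gap: you require a single cycle $C$ with $\|c,C\|=\|c',C\|=\|a_1,C\|=2$ simultaneously, so that $N(c)\cap C=N(a_1)\cap C=N(c')\cap C$ and Claim~\ref{clm1.2} forces $|C|=3$. But Claim~\ref{clm:2}(\ref{clm:2.6}) gives $\|\{c,a_1\},D\|=4$ on at least $k-2$ cycles and, separately, $\|\{c',a_1\},D\|=4$ on at least $k-2$ cycles; for $k=3$ each of these is a single cycle and they need not coincide, so the existence of your desired $C$ is unjustified. Moreover, even after reducing to $|C|=3$, you never write down an explicit beating swap --- the candidates you list are garbled, and your ``Main obstacle'' paragraph concedes this.

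The paper's argument sidesteps both problems by asking much less of the second bud. Fix any $C$ with $\|\{a_1,c_1\},C\|=4$ (one exists since $k\ge3$), so $\|a_1,C\|=\|c_1,C\|=2$; the weaker first inequality of Claim~\ref{clm:2}(\ref{clm:2.6}) already gives $\|a_2,C\|,\|c_2,C\|\ge1$. Claim~\ref{clm:2}(\ref{clm:2.1}) forces $N(a_1)\cap C,\,N(a_2)\cap C\subseteq N(c_1)\cap C=:\{x,y\}$, so some $x$ lies in $N(a_1)\cap N(a_2)\cap N(c_1)$. A single case split on whether $xc_2\in E$ finishes: if yes, the triangle $c_1c_2xc_1$ replaces $C$ and $ya_1Pa_2$ is a strictly longer path, beating by~\ref{o3}; if no, then $c_2$ has a neighbor in $C-x$, so $a_1Pa_2xa_1$ together with a cycle in $G[\{c_1,c_2\}\cup(C-x)]$ yields $k$ disjoint cycles, beating by~\ref{o1}. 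No detour through $|C|=3$ is needed, and no simultaneous control of $c_2$'s full neighborhood on $C$ is required.
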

\begin{proof}
Suppose $ $$c_{1}c_{2}\in E(L$). By Claim~\ref{clm:4.6}, $c_{1},c_{2}\notin P$.
By Claim~\ref{clm:2}(\ref{clm:2.6}) and using $k\geq3$, there is $C\in\mathcal{C}$
with $\left\Vert \{a_{1},c_{1}\},C\right\Vert =4$ and $\left\Vert \{a_{1},c_{2}\},C\right\Vert$, $ \left\Vert \{a_{2},c_{1}\},C\right\Vert\ge 3 $. 
By Claim~\ref{at_most_3}(\ref{clm:3.4}), $\left\Vert a_{1},C\right\Vert =2=\left\Vert c_{1},C\right\Vert$; so $\left\Vert a_{2},C\right\Vert$, $ \left\Vert c_{2},C\right\Vert \ge 1$.
By Claim~\ref{clm:2}(\ref{clm:2.1}), $N(a_1)\cap C,N(a_2)\cap C\subseteq N(c_1)\cap C$. Then there are distinct $x,y\in N(c_1)\cap C$ with $xa_1,xa_2,ya_1\in E$.
 If $xc_2\in E$ then $c_1c_2xc_1$, $ya_1Pa_2$ beats  $C,P$ by \ref{o3}. Else $a_1Pa_2xa_1$, $c_1(C-x)c_2c_1$ beats $C,P$ by \ref{o1}.
%
%
%
\end{proof}
\begin{claim}
\label{clm:3.5}If $|L|\geq3$ then for some $D\in\mathcal{C}$,  
$\left\Vert l,C\right\Vert =2$ for 
every $C\in\mathcal{C}-D$
and every $l\in L$.\end{claim}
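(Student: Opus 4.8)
The plan is to show that the ``bad set'' $B_l:=\{C\in\mathcal{C}:\|l,C\|\ne2\}$ of every bud $l\in L$ is contained in one common cycle $D$; the conclusion is then exactly that $\|l,C\|=2$ for all $C\in\mathcal{C}-D$ and all $l\in L$. Two facts make the buds tractable. By Claim~\ref{Cl9/2} no bud is isolated, so each $l\in L$ has exactly one neighbor in $R$ and hence $d(l)=1+\sum_{C\in\mathcal{C}}\|l,C\|$; and by Claim~\ref{clm:4.5} the set $L$ is independent, so any two buds $l,l'$ are nonadjacent and \ref{H2} gives $d(l)+d(l')\ge4k-3$. Since $|L|\ge3$ and $a_1,a_2\in L$, there is an internal bud $c\in L-a_1-a_2$; as $c$ has degree $1$ in $R$ while interior vertices of $P$ have degree at least $2$, we have $c\in V(R-P)$, so Claims~\ref{clm:2} and~\ref{at_most_3} apply to $c$.

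First I would prove a pairing lemma: for an internal bud $c$ and an endpoint $a_i$, expanding $d(a_i)+d(c)\ge4k-3$ gives $\sum_{C\in\mathcal{C}}\|\{a_i,c\},C\|\ge4k-5=4|\mathcal{C}|-1$. Each summand is at most $4$ by Claim~\ref{clm:2}(\ref{clm:2.2}), and there are $|\mathcal{C}|=k-1\ge2$ of them, so at most one cycle $C_0$ has $\|\{a_i,c\},C_0\|\le3$; on every other cycle $\|\{a_i,c\},C\|=4$, whence $\|a_i,C\|=2=\|c,C\|$ by Claim~\ref{at_most_3}(\ref{clm:3.4}). Thus $B_{a_i}\cup B_c\subseteq\{C_0\}$: each of $B_{a_i}$ and $B_c$ has size at most one, and they agree on which cycle it is.

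The remaining work is to unify the exceptional cycle over all buds. If $B_{a_1}=\{D\}$ is nonempty, then the pairing lemma applied to each internal bud $c$ forces $C_0=D$ (since $D\in B_{a_1}$), so $B_c\subseteq\{D\}$ for every internal $c$; to also get $B_{a_2}\subseteq\{D\}$ I would invoke Claim~\ref{at_most_3}(\ref{clm:3.1}), which ties $\|a_1,\cdot\|=3$ to $\|a_2,\cdot\|=3$ and lets me exclude upward deviations on the relevant cycles, so any deviation of $a_2$ is downward to value $1$; a deviation of $a_2$ on a cycle other than $D$ would then force $d(a_1),d(a_2)\le2k-2$, contradicting $d(a_1)+d(a_2)\ge4k-3$. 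If instead $B_{a_1}=\varnothing$, the symmetric argument on $a_2$ handles $B_{a_2}\ne\varnothing$; and if both $B_{a_1},B_{a_2}$ are empty, the pairing lemma gives $|B_c|\le1$ for each internal bud, while two internal buds deviating on distinct cycles would both have degree $2k-2$, again violating \ref{H2}. In every case $\bigcup_{l\in L}B_l\subseteq\{D\}$ for a single cycle $D$, which is the assertion.

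I expect the unification to be the main obstacle. The pairing lemma is a clean consequence of the degree bound together with Claim~\ref{at_most_3}(\ref{clm:3.4}), but coordinating the possibly many internal buds with the two endpoints---in particular excluding the scenario in which different buds place their unique deviation on different cycles---relies on repeated use of the ``two deficient buds force degree sum $4k-4$'' contradiction, plus Claim~\ref{at_most_3}(\ref{clm:3.1}) to keep $a_1$ and $a_2$ from deviating upward independently. The delicate bookkeeping is tracking whether a deviation drops to value $0$ or $1$, the former being excluded by Claim~\ref{at_most_3}(\ref{clm:3.1}) precisely when the paired endpoint is not at value $3$.
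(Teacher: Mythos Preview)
Your proposal is correct and is essentially the paper's argument repackaged: your ``pairing lemma'' is exactly the content of Claim~\ref{clm:2}(\ref{clm:2.6}) combined with Claim~\ref{at_most_3}(\ref{clm:3.4}), and your unification step---ruling out two buds deviating on distinct cycles via the degree-sum bound $4k-4<4k-3$, with Claim~\ref{at_most_3}(\ref{clm:3.1}) and (\ref{clm:3.3}) pinning deviations of the $a_i$ to value $1$---mirrors the paper's two cases. The only place your sketch is slightly elliptical is the line ``force $d(a_1),d(a_2)\le 2k-2$'': to get $d(a_1)\le 2k-2$ you need $\|a_1,D\|\le 1$, which follows because $D\notin B_{a_2}$ gives $\|a_2,D\|=2$, and then Claim~\ref{at_most_3}(\ref{clm:3.1}) forbids $\|a_1,D\|=3$; this is straightforward but worth making explicit.
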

\begin{proof}
Suppose some $D_{1},D_{2}\in\mathcal{C}$ and $l_{1},l_{2}\in L$
satisfy $D_{1}\ne D_{2}$ and $\left\Vert l_{1},D_{1}\right\Vert \ne2\ne\left\Vert l_{2},D_{2}\right\Vert $. 

\noindent CASE 1: $l_{j}\notin \{a_{1},a_{2}\}$ for some $j\in[2]$. Say
$j=1$. For $i\in[2]$: $\left\Vert \{a_{i},l_{1}\},D_{1}\right\Vert \ne4$
by Claim~\ref{at_most_3}(\ref{clm:3.4}); $\left\Vert \{a_{i},l_{1}\},D_{2}\right\Vert =4$
by Claim~\ref{clm:2}(\ref{clm:2.6}); $\left\Vert a_{i},D_{2}\right\Vert =2$
by Claim~\ref{at_most_3}(\ref{clm:3.4}). Then $l_{2}\notin\{a_{1},a_{2}\}$.
By Claim~\ref{clm:4.5}, $l_{1}l_{2}\not\in E(G)$. Claim~\ref{at_most_3}(\ref{clm:3.3})
yields the contradiction:
\[
4k-3\leq\left\Vert \{l_{1},l_{2}\},R\cup D_{1}\cup D_{2}\cup(V-R-D_{1}-D_{2})\right\Vert \leq2+3+3+4(k-3)=4k-4.
\]

\noindent CASE 2: $\{l_{1},l_{2}\}\subseteq\{a_{1},a_{2}\}$. Let
$c\in L-l_{1}-l_{2}$. As above, $\left\Vert \{l_{1},c\},D_{1}\right\Vert \ne4$,
and so $\left\Vert c,D_{2}\right\Vert =2=\left\Vert l_{1},D_{2}\right\Vert $. This implies $l_1 \neq l_2$.
By Claim~\ref{at_most_3}(\ref{clm:3.1},\ref{clm:3.3}), $\left\Vert l_{2},D_{2}\right\Vert =1$.
Thus $\left\Vert \{l_{2},c\},D_{1}\right\Vert =4$; so $\left\Vert c,D_{1}\right\Vert =2$, and
 $\left\Vert l_{1},D_{1}\right\Vert =1$. With 
Claim \ref{clm:4.6},
 this yields the contradiction:
\[
4k-3\leq\left\Vert \{l_{1},l_{2}\},R\cup D_{1}\cup D_{2}\cup(V-R-D_{1}-D_{2})\right\Vert \leq2+3+3+4(k-3)=4k-4.
\]
\end{proof}
\begin{claim}
\label{clm:5} $R$ is a subdivided star (possibly a path).\end{claim}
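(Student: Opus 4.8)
The plan is to prove the equivalent statement that $R$ is connected and has at most one vertex of $R$-degree at least $3$; since by Claim~\ref{Cl9/2} the forest $R$ has no isolated vertices, this is exactly the condition for $R$ to be a subdivided star. Thus I must rule out two obstructions: (A) $R$ is disconnected, and (B) $R$ is a tree with two distinct branch vertices (vertices of $R$-degree at least $3$). In either case $R$ has at least four leaves, so there is ample room to select auxiliary leaves.

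The engine will be the rigidity of leaves supplied by Claim~\ref{clm:3.5}. For every leaf $l\in L$ and every $C\in\mathcal C-D$ we have $\|l,C\|=2$; combined with $\|l,R\|=1$ (leaves are non-isolated and, by Claim~\ref{clm:4.5}, independent) and $\|l,D\|\le 2$ from Claim~\ref{at_most_3}(\ref{clm:3.3}), this gives $d(l)=2k-3+\|l,D\|$ for each $l$ in $L':=L-a_1-a_2$. Applying (H2) to a nonadjacent pair of such leaves yields $\|l,D\|+\|l',D\|\ge 3$, so all but at most one leaf of $L'$ satisfies $\|l,D\|=2$ and is therefore high. In particular, by Claim~\ref{clm:1}, every cycle to which a high leaf sends two edges has length $3$ or $4$, and such a leaf's two neighbors are adjacent on a triangle and antipodal on a $4$-cycle. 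This rigid, near-uniform attachment pattern is what will make the swaps below succeed.

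To finish, I would derive a contradiction in each of (A) and (B) by a single swap governed by \ref{o1}--\ref{o4}. For (B), choose two leaves $c_1,c_2\in L'$ whose branches hang off the longest path $P$ at different vertices, so that on some triangle or $4$-cycle $C\ne D$ their neighborhoods differ; splicing the unique $R$-path $c_1Rc_2$ between two suitable attachment points on $C$ produces a cycle using this $R$-path together with part of $C$, while the leftover vertices of $C$ rejoin $R$, and I would check that this either creates $k$ disjoint cycles or strictly improves one optimality parameter. For (A), pick leaves $c_1,c_2$ in distinct components; since there is then no $R$-path to be preserved, their identical two-edge attachments to a common short cycle can be recombined directly into a configuration beating $\mathcal C$, again contradicting the choice of $\mathcal C$.

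The main obstacle is the swap bookkeeping: each candidate modification must be shown to respect the lexicographic priority of \ref{o1}--\ref{o4}, never increasing a higher-priority parameter while improving a lower one. Two objects force genuine case distinctions and must be excluded from, or handled separately within, every swap: the distinguished cycle $D$, to which a leaf may send fewer than two edges, and the path endpoints $a_1,a_2$, which by Claim~\ref{at_most_3}(\ref{clm:3.2}) may send three edges to a single cycle and so do not obey the uniform leaf pattern.
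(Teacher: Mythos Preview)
Your proposal has a genuine gap in both the structural setup and the swap mechanism.

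First, the premise ``so that on some triangle or $4$-cycle $C\ne D$ their neighborhoods differ'' is false. Claim~\ref{clm:2}(\ref{clm:2.1}) says exactly the opposite: if a leaf $u\in L-a_1-a_2$ has two neighbors $x,y$ in $C$, then neither $a_1$ nor $a_2$ can touch $C-\{x,y\}$. Since $\|l,C\|=2$ for every $l\in L$ (from Claim~\ref{clm:3.5}), this forces $N(l)\cap C$ to be the \emph{same} two-element set $\{w_1,w_3\}$ for every leaf $l$, including $a_1$ and $a_2$. So the case split you build on never materializes.

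Second, even granting your setup, the swap you describe---replace $C$ by a single cycle through $c_1Rc_2$ and send the leftover $C$-vertices to $R$---keeps $|\mathcal C|$ fixed and then has to win on \ref{o2}, \ref{o3}, or \ref{o4}. You give no reason this happens, and in general it will not: the new cycle is typically longer than $C$, so \ref{o2} goes the wrong way.

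The missing idea is that when $R$ is not a subdivided star, it contains two \emph{disjoint} leaf-to-leaf paths $c_1Rd_1$ and $c_2Rd_2$: if $R$ is disconnected, take one path in each of two components (Claim~\ref{Cl9/2} guarantees two leaves per component); if $R$ is a tree with two branch vertices $s_1,s_2$, delete the interior of $s_1Rs_2$ and take leaves from the two resulting subtrees. Now use the fact that all four leaves share the same neighbors $\{w_1,w_3\}$ in $C$: the cycles $w_1c_1Rd_1w_1$ and $w_3c_2Rd_2w_3$ are disjoint and replace the single cycle $C$, yielding $k$ disjoint cycles and contradicting \ref{o1} directly. No bookkeeping on \ref{o2}--\ref{o4} is needed, and the exceptional cycle $D$ and the special behavior of $a_1,a_2$ play no role once you pick $C\in\mathcal C-D$ (possible since $k\ge3$).
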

\begin{proof}
Suppose not. Then we claim $R$ has distinct leaves $c_{1},d_{1},c_{2},d_{2}\in L$
such that $c_{1}Rd_{1}$ and $c_{2}Rd_{2}$ are disjoint paths. Indeed, if
$R$ is disconnected then each component has two distinct leaves by
Claim~\ref{Cl9/2}. Else $R$ is a tree. As $R$ is not a subdivided
star, it has distinct vertices $s_{1}$ and $s_{2}$ with degree at
least three. Deleting the edges and interior vertices of $s_{1}Rs_{2}$
yields disjoint trees containing all leaves of $R$. Let $T_i$ be the tree containing $s_i$, and pick $c_i,d_{i}\in T_i$.

By Claim~\ref{clm:3.5}, using $k \geq 3$, there is a cycle $C\in\mathcal{C}$ such that 
$\left\Vert l,C\right\Vert =2$ for all $l\in L$. By Claim~\ref{clm:2}(\ref{clm:2.1}),
$N(a_{1})\cap C=N(l)\cap C=N(a_{2})\cap C=:\{w_{1},w_{3}\}$ for $l\in L-a_1 -a_2$.
Then replacing $C$ in $\mathcal C$ with $w_1c_1Rd_1w_1$ and $w_3c_2Rd_2w_3$
yields $k$ disjoint cycles.
\end{proof}
\begin{claim}
\label{clm:6} $R$ is a path or a star. \end{claim}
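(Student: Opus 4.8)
The plan is to build on Claim~\ref{clm:5}: since $R$ is already known to be a subdivided star, it suffices to rule out the case where $R$ has a branch vertex of degree at least $3$ together with a branch of length at least $2$. So I would assume for a contradiction that $R$ is neither a path nor a star. Let $s$ be the center of the subdivided star, so $\|s,R\|\ge 3$, and let $P=a_1Ra_2$ be the longest path, passing through $s$ and realized by the two longest branches. Since $R$ is not a star, the branch ending at $a_1$ has length at least $2$; let $x$ be the neighbor of $a_1$ on this branch, an internal vertex with $\|x,R\|=2$. Because $\|s,R\|\ge 3$ there is a third branch; let $c\in L-a_1-a_2$ be its leaf. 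As in the proof of Claim~\ref{clm:5}, Claim~\ref{clm:3.5} gives a single special cycle $D$ such that for every $C\in\mathcal C-D$ all leaves share the same two neighbors $N(l)\cap C=\{w_1,w_3\}$, via Claim~\ref{clm:2}(\ref{clm:2.1}).

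Next I would exploit the internal vertex $x$, which is not covered by the bud-based Claims~\ref{clm:2}--\ref{clm:3.5}. Since $R$ is induced, $x$ and $c$ lie in different branches and so are nonadjacent in $G$; hypothesis~\ref{H2} then gives $d(x)\ge 4k-3-d(c)$. A leaf $l$ satisfies $\|l,R\|\le 1$, $\|l,D\|\le 3$ by Claim~\ref{clm:1}, and $\|l,C\|=2$ for the remaining $k-2$ cycles by Claim~\ref{clm:3.5}, so $d(l)\le 2k$; hence $d(x)\ge 2k-3$, and subtracting $\|x,R\|=2$ yields $\sum_{C\in\mathcal C}\|x,C\|\ge 2(k-1)-3$. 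For $k\ge 5$ this forces $\|x,C\|\ge 2$ for some cycle $C$, whence $|C|\le 4$ by Claim~\ref{clm:1}. The heart of the argument is then a local exchange: using two neighbors of $x$ on such a $C$ to reroute the subpath $a_1Rx$ through $C$, while simultaneously using the common neighbors $\{w_1,w_3\}$ of $a_1$ and $c$ to absorb the freed leaves, I would produce either $k$ disjoint cycles, contradicting \ref{o1}, or a strictly longer leftover path, contradicting \ref{o3} (for instance, prepending a freed cycle vertex in front of $a_2$ to obtain a path of length $|P|+1$).

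The hard part is the bookkeeping of this exchange. Consuming the center $s$ or an internal vertex of $P$ tends to disconnect the remaining branches and \emph{shorten} the longest path, so the reroute must be arranged to free a cycle vertex into $R$ without spending any vertex of the two branches forming $P$, and the replacement cycles must be made vertex-disjoint; verifying that some valid configuration always strictly improves one of \ref{o1}, \ref{o3} (or, failing that, \ref{o4}) is the main obstacle. A second difficulty is that the degree pigeonhole giving $\|x,C\|\ge 2$ is tight exactly when $k\in\{3,4\}$, precisely the range containing the exceptional graphs $\mathbf Y_1,\mathbf Y_2$, so these small cases, together with the behavior on the special cycle $D$, would need a separate direct analysis rather than the counting above.
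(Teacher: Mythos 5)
Your setup coincides with the paper's (your $x$, $c$, $s$ are its $p$, $d$, $r$; the first move---applying \ref{H2} to the nonadjacent pair consisting of the internal vertex next to $a_1$ and a third leaf---is identical), but the proposal has two genuine gaps, both of which you flag yourself without closing. The first is that the pivotal step is never executed: after locating a cycle $C$ with $\left\Vert x,C\right\Vert \geq2$ you propose to reroute $a_1Rx$ through $C$ and win by producing a longer path, and then concede that verifying ``some valid configuration always strictly improves'' an optimality criterion is ``the main obstacle.'' That concession is precisely the content of the claim. The paper does not reroute anything and never appeals to \ref{o3} or \ref{o4} here: in every case it exhibits \emph{two disjoint cycles inside} $G[R\cup C]$ (or inside $R\cup C_1\cup C_2$), which together with $\mathcal{C}-C$ gives $k$ disjoint cycles and contradicts \ref{o1}. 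The enabling observation, absent from your sketch, is that the path $a_2Rd$ between the other two leaves avoids both $a_1$ and $p$, so the vertices of $C$ can be split: one part closes a short cycle through $p$ and/or $a_1$ (e.g.\ $wa_1pw$ or $p(C-w)p$), the remaining part closes a cycle on $a_2Rd$ (e.g.\ $wl_1Rl_2w$ or $l_2(C-w)l_2$), with the needed attachments supplied by Claims~\ref{clm:2}(\ref{clm:2.1},\ref{clm:2.6}), \ref{at_most_3}(\ref{clm:3.1}) and \ref{clm:3.5}. This sidesteps all the disconnection/bookkeeping problems you describe.

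The second gap is that $k\in\{3,4\}$ is left entirely open, and this is not a fringe case but the heart of the claim. Your pigeonhole needs $2k-5>k-1$, i.e.\ $k\geq5$; even with the sharper bound $d(c)\leq 2k-1$ from Claim~\ref{at_most_3}(\ref{clm:3.3}) (your $d(l)\leq2k$ gives away an edge, since that claim bounds $\left\Vert c,C\right\Vert\leq2$ for \emph{every} cycle, including $D$), one still cannot force $\left\Vert x,C\right\Vert\geq2$ when $k=3$. The paper never pigeonholes onto a single cycle: it uses only $\left\Vert p,V-R\right\Vert\geq 2k-4\geq2$ and splits according to whether (i) some cycle receives three edges from $p$, (ii) two distinct cycles each receive an edge from $p$, or (iii) all outside edges of $p$ go to one cycle with $\left\Vert p,C\right\Vert\leq2$---and case (iii) forces $k=3$, $d(p)=4$, which is exactly the delicate situation requiring the longest analysis in the paper's proof. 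So the range you defer to ``a separate direct analysis'' is where the real work lies. (Your worry that $\mathbf Y_1,\mathbf Y_2$ obstruct this claim is misplaced---they are excluded by the edge-maximal counterexample setup and surface only in the endgame of Section~2.4---but the $k=3$ difficulty itself is real and unaddressed.)
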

\begin{figure}[ht]

\subcaptionbox{\label{subfig3.1}}
{\begin{tikzpicture}[scale=0.8]
\draw(0,0) node[vertex,label=$r$,draw] (r){} ;
\draw(2.5,1) node[vertex,label=$p$,draw] (p){} ;
\draw(2.5,0) node[vertex,label=$a_2$,draw] (c){} ;
\draw(2.5,-1) node[vertex,label=$d$,draw] (d){} ;
\draw(3.2,1.5) node[vertex,label=$a_1$,draw] (l){} ;
\draw (r) -- (c) (d)--(r)--(p)--(l);
\end{tikzpicture}}
\subcaptionbox{\label{subfig3.2}}
{\begin{tikzpicture}[scale=0.8]
\draw(0,0) node[vertex,label=$r$,draw] (r){} ;
\draw(2.5,1) node[vertex,label={$p$},draw] (p){} ;
\draw(2.5,0) node[vertex,label={$a_2$},draw] (c){} ;
\draw(2.5,-1) node[vertex,label=$d$,draw] (d){} ;
\draw(3.2,1.5) node[vertex,label={$a_1$},draw] (l){} ;
\draw (r) -- (c) (d)--(r)--(p)--(l);
\draw(4,1) node[vertex,label=$w$,draw] (w1){} ;
\draw(4,0) node[vertex] (w2){} ;
\draw(4,-1) node[vertex] (w3){} ;
\draw (w1) -- (w2) -- (w3);
\draw (w3) to[out=45, in=-45] (w1);
\foreach \x in {1,2,3}
	{\draw (p) -- (w\x);
	\draw (l) -- (w\x);
	\draw (c) -- (w\x);}
\draw[dotted, line width=2pt] (p)--(w1)--(l)--(p);
\draw[line width=2pt, dotted] (c)--(w2)--(w3)--(c);
\end{tikzpicture}}
\quad
\subcaptionbox{\label{subfig3.3}}
{\begin{tikzpicture}[scale=0.8]
\draw(0,0) node[vertex,label=$r$,draw] (r){} ;
\draw(2.5,1) node[vertex,label={$p$},draw] (p){} ;
\draw(2.5,0) node[vertex,label={$l_1$},draw] (c){} ;
\draw(2.5,-1) node[vertex,label=$l_2$,draw] (d){} ;
\draw(3.2,1.5) node[vertex,label={$a_1$},draw] (l){} ;
\draw (d)--(r) -- (c) (r)--(p)--(l);
\draw(4,1) node[vertex,label=$w$,draw] (w1){} ;
\draw(4,0) node[vertex] (w2){} ;
\draw(4,-1) node[vertex] (w3){} ;
\draw (w1) -- (w2) -- (w3);
\draw (w3) to[out=45, in=-45] (w1);
\foreach \x in {1,2,3}
	{\draw (p) -- (w\x);}
\draw (c) -- (w1) -- (d);
\draw[dotted, line width=2pt] (c)--(w1)--(d)--(r) -- (c);
\draw[line width=2pt, dotted] (p)--(w2)--(w3)--(p);
\end{tikzpicture}}
\quad
\subcaptionbox{\label{subfig3.4}}
{\begin{tikzpicture}[scale=0.8]
\draw(0,0) node[shape=circle,inner sep=0,fill=black,minimum size=1.5mm,label=$r$,draw] (r){} ;
\draw(2.5,1) node[vertex,label={$p$},draw] (p){} ;
\draw(2.5,0) node[vertex,label={$l_1$},draw] (c){} ;
\draw(2.5,-1) node[vertex,label=$l_2$,draw] (d){} ;
\draw(3.2,1.5) node[vertex,label={$a_1$},draw] (l){} ;
\draw (d)--(r) -- (c) (r)--(p)--(l);
\draw(4,1) node[vertex,label=$w$,draw] (w1){} ;
\draw(4,0) node[vertex] (w2){} ;
\draw(4,-1) node[vertex] (w3){} ;
\draw (w1) -- (w2) -- (w3);
\draw (w3) to[out=45, in=-45] (w1);
\foreach \x in {1,2,3}
	{\draw (p) -- (w\x);}
\draw (c) -- (w1);
\draw (w2) -- (d) -- (w3);
\draw[dotted, line width=2pt] (c)--(w1)--(p)--(r) -- (c);
\draw[line width=2pt, dotted] (d)--(w2)--(w3)--(d);
\end{tikzpicture}}
\caption{Claim \ref{clm:6}}
\end{figure}

\begin{proof}
By Claim~\ref{clm:5}, $R$ is a subdivided star. If $R$ is neither
a path nor a star then there are vertices $r,p,d$ with $\left\Vert r,R\right\Vert \geq3$,
$\left\Vert p,R\right\Vert =2$, $d\in L-a_{1}-a_{2}$ and (say) $pa_{1}\in E$.
Then $a_{2}Rd$ is disjoint from $pa_{1}$ (see Figure \ref{subfig3.1}).
By Claim~\ref{at_most_3}(\ref{clm:3.3}), $d(d)\leq1+2(k-1)=2k-1$.
Then:
\begin{equation}\label{eqn:substar}
\left\Vert p,V-R\right\Vert \geq4k-3-\left\Vert p,R\right\Vert -d(d)\geq4k-5-(2k-1)=2k-4\geq2.
\end{equation}

In each of the following cases,  $R\cup C$ has two disjoint cycles, contradicting \ref{o1}.

\noindent CASE 1: $\left\Vert p,C\right\Vert =3$ for some $C\in\mathcal{C}$. Then $|C|=3$. 
By Claim~\ref{at_most_3}(\ref{clm:3.1}), if $\left\Vert d,C\right\Vert =0$
then $\left\Vert a_{1},C\right\Vert =3=\left\Vert a_{2},C\right\Vert $.
Then for $w\in C$, $wa_{1}pw$ and $a_{2}(C-w)a_{2}$ are disjoint
cycles (see Figure \ref{subfig3.2}). Else by Claim~\ref{at_most_3}(\ref{clm:3.3}),
$\|d,C\|$, $\|a_{2},C\|\in\{1,2\}$. By Claim~\ref{clm:2}(\ref{clm:2.6}),
$\|\{d,a_{2}\},C\|\geq3$, so there are $l_{1},l_{2}\in\{a_{2},d\}$
with $\left\Vert l_{1},C\right\Vert \ge1$ and $\left\Vert l_{2},C\right\Vert =2$;
say $w\in N(l_1)\cap C$. If $l_{2}w\in E$ then $wl_{1}Rl_{2}w$ and $p(C-w)p$
are disjoint cycles (see Figure \ref{subfig3.3}); else $l_{1}wpRl_{1}$
and $l_{2}(C-w)l_{2}$ are disjoint cycles (see Figure \ref{subfig3.4}).

\noindent CASE 2: There are distinct $C_{1},C_{2}\in\mathcal{C}$
with $\left\Vert p,C_{1}\right\Vert ,$ $\left\Vert p,C_{2}\right\Vert \geq1$.
By Claim~\ref{clm:3.5}, for some $i\in[2]$ and all $c\in L$, $\left\Vert c,C_{i}\right\Vert =2$.
Let $w\in N(p)\cap C_{i}$. If $wa_{1}\in E$ then $D:=wpa_{1}w$ is a cycle and
$G[(C_{i}-w)\cup a_2Rd]$ contains cycle disjoint from $D$. Else, if $w\in N(a_{2})\cup N(d)$,
say $w\in N(c)$, then $a_{1}(C_{i}-w)a_{1}$ and $cwpRc$ are disjoint
cycles. Else, by Claim~\ref{clm:1} there exist vertices $u\in N(a_{2})\cap N(d)\cap C_{i}$ and $v\in N(a_{1})\cap C_{i}-u$.
Then $ua_2Rdu$ and $a_{1}v(C_{i}-u)wpa_{1}$ are disjoint cycles.

\noindent CASE 3: Otherwise. Then using (\ref{eqn:substar}), $\left\Vert p,V-R\right\Vert =2=\left\Vert p,C\right\Vert $
for some $C\in\mathcal{C}$. In this case, $k=3$ and $d(p)=4$. By (H2),  $d(a_2),d(d)\ge 5$. Say $\mathcal C=\{C,D\}$. By Claim \ref{clm:2}(\ref{clm:2.2}), $\|\{a_2,d\},D\|\le4$. Thus,
\[
 \|\{a_{2},d\},C\|=\|\{a_2,d\},(V-R-D)\|\ge10-2-4=4.
\]
By Claim \ref{at_most_3}(\ref{clm:3.3}, \ref{clm:3.4}), $\left\Vert a_{2},C\right\Vert =\left\Vert d,C\right\Vert =2$ 
and  $\left\Vert a_{1},C\right\Vert \geq1$. Say $w\in N(a_{1})\cap C$. If
$wp\in E$ then 
$dRa_{2}(C-w)d$ contains a cycle disjoint from 
$wa_{1}pw$.
 Else, by Claim~\ref{clm:2}(\ref{clm:2.1}) there exists $x\in N(a_{2})\cap N(d)\cap C$. If $x\ne w$ then $xa_{2}Rdx$
and $wa_{1}p(C-x)w$ are disjoint cycles. Else $x=w$, and  $xa_2Rdx$ and $p(C-w)p$ are disjoint cycles. 
\end{proof}

\begin{lem}
\label{clm:7}$R$ is a path.\end{lem}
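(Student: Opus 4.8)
The plan is to combine the previous claim with the structural facts we have already established to eliminate the remaining non-path case. By Claim~\ref{clm:6}, $R$ is either a path or a star, so it suffices to rule out the possibility that $R$ is a star with a center $r$ of degree $\left\Vert r,R\right\Vert\geq3$ and at least three leaves. If $R$ is such a star, then all of its leaves lie in $L$, and since $a_1,a_2$ are the endpoints of a longest path $P$, at most one of them can be the center; by Claim~\ref{clm:4.6} we have $|P|\geq3$, which forces $a_1,a_2$ to be two distinct leaves and $r\notin\{a_1,a_2\}$. Thus there is a third leaf $c\in L-a_1-a_2$ adjacent to $r$ as well. Under this configuration every leaf is a bud whose unique $R$-neighbor is $r$, so $\left\Vert l,R\right\Vert=1$ for each leaf, and Claim~\ref{clm:3.5} (available since $|L|\geq3$) gives a cycle $D\in\mathcal C$ with $\left\Vert l,C\right\Vert=2$ for every leaf $l$ and every $C\in\mathcal C-D$.

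The key step is then to use this uniform adjacency to build two disjoint cycles out of a single cycle $C\in\mathcal C-D$ together with the star, contradicting \ref{o1}. Applying Claim~\ref{clm:2}(\ref{clm:2.1}) to the pairs of leaves, I expect to get $N(a_1)\cap C=N(a_2)\cap C=N(c)\cap C=:\{w_1,w_3\}$, exactly as in the proof of Claim~\ref{clm:5}. The difference from the subdivided-star argument is that now all leaves attach to the \emph{same} center $r$, so I cannot directly split into two leaf-disjoint paths through distinct high-degree vertices. Instead I would route one cycle as $w_1 a_1 r a_2 w_3$ (or $w_1 a_1 r c\, w_3$) using the center $r$ to connect two of the leaves, and the second cycle inside $C-\{w_1,w_3\}$ through the third leaf or the remaining vertices of $C$. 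Since $|C|\geq3$ and $\left\Vert l,C\right\Vert=2$ with common neighborhood $\{w_1,w_3\}$, the two neighbors $w_1,w_3$ are nonadjacent when $|C|=4$ by Claim~\ref{clm:1}(c), which is exactly the obstruction one must handle when closing up the cycles; the cases $|C|=3$ and $|C|=4$ should be checked separately.

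The main obstacle will be verifying that the two cycles one constructs are genuinely disjoint and each has length at least three in every sub-case, particularly when $|C|=4$ and the common neighborhood $\{w_1,w_3\}$ occupies two opposite vertices of $C$, leaving only two vertices for the second cycle. In that tight case I would use the third leaf $c$ to supply an extra vertex: the cycle $c(C-w_1-w_3)c$ may fail to close, so I would instead take a cycle through $r$ and two leaves on one side and the short cycle on $C$'s remaining two vertices plus a leaf on the other side, exactly mirroring the case analysis in Figures~\ref{subfig3.2}--\ref{subfig3.4}. If no valid pair of disjoint cycles can be formed directly, the fallback is a degree count: the center $r$ and a far leaf $c$ are nonadjacent, so $d(r)+d(c)\geq4k-3$, while $\left\Vert c,\mathcal C\right\Vert=2(k-1)$ and $\left\Vert c,R\right\Vert=1$ pin down $d(c)$, forcing $d(r)$ to be large enough that $r$ has a third neighbor on some cycle, which then yields the disjoint cycles via \ref{o1}. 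Having derived a contradiction in every case, I conclude $R$ cannot be a star, so by Claim~\ref{clm:6} it must be a path.
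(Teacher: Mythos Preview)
Your approach has a genuine gap. Once you establish that all leaves share the common neighborhood $\{w_1,w_3\}$ in $C$, the direct construction of two disjoint cycles from $R\cup C$ is doomed: if $|C|=3$ then $C\smallsetminus\{w_1,w_3\}$ is a single vertex, and if $|C|=4$ then $C\smallsetminus\{w_1,w_3\}$ consists of the two nonadjacent vertices $w_2,w_4$. In either case there is no cycle in $C\smallsetminus\{w_1,w_3\}$, and the third leaf $c$ cannot help since $N(c)\cap C=\{w_1,w_3\}$ as well. Any cycle you build through the star must pass through $r$, and $r$ is the only vertex of $R$ connecting distinct leaves, so you can close at most one cycle this way. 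Your fallback is also incorrect: in a star, the center $r$ is adjacent to \emph{every} leaf, so there is no ``far leaf $c$'' with $rc\notin E$, and (H2) gives no lower bound on $d(r)+d(c)$.

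The paper takes an entirely different route. Rather than manufacturing two disjoint cycles inside $R\cup C$, it builds a large independent set to contradict (H3). With $l',l''\in L-l$ chosen so that $\left\Vert l',C\right\Vert=\left\Vert l'',C\right\Vert=2$ for \emph{every} $C\in\mathcal C$, it sets $Z:=N(l')-R$ and $A:=V\smallsetminus(Z\cup\{r\})$, so $|A|=|G|-2k+1$. The work then goes into proving $A$ is independent: $A\cap R=L$ is independent by Claim~\ref{clm:4.5}; $\left\Vert L,A\right\Vert=0$ by Claim~\ref{clm:2}(\ref{clm:2.1}); $A\cap C$ is independent for each $C$ by Claim~\ref{clm:1}(c); and the delicate part is ruling out an edge $v_1v_2$ between $A\cap C_1$ and $A\cap C_2$ for distinct $C_1,C_2\in\mathcal C$, handled by a case analysis on $\left\Vert v_1,C_2\right\Vert$ that swaps cycles to lengthen $P$ via \ref{o3}. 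You never invoke (H3), which is precisely the hypothesis doing the work here.
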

\begin{proof}
Suppose $R$ is not a path. Then it is a star with root $r$ and at
least three leaves, 
 any of which can play the role of
$a_{i}$ or a leaf in $L-a_{1}-a_{2}$. Thus Claim~\ref{at_most_3}(\ref{clm:3.3})
implies $\left\Vert l,C\right\Vert \in\{1,2\}$ for all $l\in L$ and $C\in\mathcal{C}$.
By Claim~\ref{clm:3.5} there is $D\in\mathcal{C}$ such that 
for all $l\in L$ and $C\in\mathcal{C}-D$,
$\left\Vert l,C\right\Vert =2$. By Claim~\ref{clm:2}(\ref{clm:2.6})
there is $l\in L$ such that for all $c\in L-l$, $\left\Vert c,D\right\Vert =2$. Fix distinct leaves $l',l''\in L-l$.

Let $Z=N(l')-R$ and $A=V\smallsetminus(Z\cup\{r\})$. By the first paragraph, every $C\in\mathcal{C}$
satisfies $|Z\cap C|=2$, so $|A|=|G|-2k+1$. For a contradiction,
we show that $A$ is independent. 

Note $A \cap R = L$, so by Claim~\ref{clm:4.5}, $A\cap R$ is independent. By Claim~\ref{clm:2}(\ref{clm:2.1}),
\begin{equation}\label{*}
\textrm{for all $c\in L$ and for all $C\in\mathcal{C}$, $N(c)\cap C\subseteq Z$.}
\end{equation}
Therefore, $\|L,A\|=0$. By Claim~\ref{clm:1}(c), for all $C\in\mathcal{C}$,
 $C\cap A$ is independent. Suppose, for a contradiction, $A$ is
not independent. Then there exist distinct $C_{1},C_{2}\in\mathcal{C}$,
$v_{1}\in A\cap C_{1}$, and $v_{2}\in A\cap C_{2}$ with $v_{1}v_{2}\in E$.
Subject to this choose  
$C_{2}$ 
with $\left\Vert v_{1},C_{2}\right\Vert $
maximum. Let $Z\cap C_{1}=\{x_{1},x_{2}\}$ and $Z\cap C_{2}=\{y_{1},y_{2}\}$. 

\noindent CASE 1: $\left\Vert v_{1},C_{2}\right\Vert \geq2$. Choose
$i\in[2]$ so that $\left\Vert v_{1},C_{2}-y_{i}\right\Vert \geq2$.
Then define $C_{1}^{*}:=v_{1}(C_{2}-y_i)v_{1}$, $C_{2}^{*}:=l'x_{1}(C_{1}-v_{1})x_{2}l'$,
and $P^{*}:=y_{i}l''rl$ (see Figure~\ref{fig:clm:7.1}). 
By \eqref{*}, $P^{*}$ is a path and $C_2^*$ is a cycle. Then $C_{1}^{*},C_{2}^{*},P^{*}$ beats $C_{1},C_{2},P$ by \ref{o3}.

\noindent CASE 2: $\|v_1,C_2\| \leq 1$. Then for all $C\in\mathcal{C}$, $\left\Vert v_{1},C\right\Vert \leq2$ 
 and $\left\Vert v_{1},C_{2}\right\Vert =1$;
so $\left\Vert v_{1},\mathcal{C}\right\Vert  = \|v_1,C_2\cup ( \mathcal C - C_2)\| \leq 1+2(k-2)= 2k-3$. By \eqref{*} $\left\Vert v_{1},L\right\Vert =0$
and $d(l)\leq2k-1$. By $(H2)$, $\|v_1,r\|=\|v_1,R\| = (4k-3)-\|v_1,\mathcal C\|-d(l) \leq 
(4k-3)-(2k-3)-(2k-1)=1$, and $v_{1}r\in E$.
 Let $C_{1}^{*}:=l'x_{1}(C_{1}-v_{1})x_{2}l'$,
$C_{2}^{*}:=l''y_{1}(C_{2}-v_{2})y_{2}l''$, and $P^{*}:=v_{2}v_{1}rl$ (see Figure~\ref{fig:clm:7.2}).
Then $C_{1}^{*},C_{2}^{*},P^{*}$ beats $C_{1},C_{2},P$
by \ref{o3}.
\end{proof}
\begin{figure}[ht]
\subcaptionbox{\label{fig:clm:7.1}}
{\begin{tikzpicture}
\draw (0,0) circle (1cm);
\draw (0,1) node[vertex, label=$v_1$,draw](v1){};
\draw (-.707,-.707) node[vertex, label=$x_1$,draw](x1){};
\draw (.707,-.707) node[vertex, label=$x_2$,draw](x2){};
\draw (3,0) circle (1cm);
\draw (3,1) node[vertex, label=$v_2$,draw](v2){};
\draw (3+-.707,-.707) node[vertex](y){};
\draw (3.707,-.707) node[vertex, label=$y_i$,draw](yi){};
\draw (0,-2) node[vertex, 
label=below:{$l'$},draw](l'){};
\draw (1.5,-2) node[vertex, 
label=below:{$r$},draw](r){};
\draw (3,-2) node[vertex, 
label=below:{$l''$},draw](l''){};
\draw (2.5,-2.5) node[vertex,
label=below:{$l$},draw](l){};
\draw (l')--(r)--(l'');
\draw (r)--(l);
\draw (x1)--(l')--(x2);
\draw (l'')--(yi);
\draw (v2)--(v1)--(y);
\draw[dotted, line width=2pt] (yi)--(l'')--(r)--(l);
\draw[dotted, line width=2pt] (x1)--(l')--(x2);
\draw[dotted, line width=2pt] (x1) arc (225:320:1cm);
\draw[dotted, line width=2pt] (v2) arc (90:225:1cm);
\draw[dotted, line width=2pt] (y)--(v1)--(v2);
\end{tikzpicture}
}
\hspace{2cm}
\subcaptionbox{\label{fig:clm:7.2} }
{\begin{tikzpicture}
\draw (0,0) circle (1cm);
\draw (0,1) node[vertex, label=$v_1$,draw](v1){};
\draw (-.707,-.707) node[vertex, label=$x_1$,draw](x1){};
\draw (.707,-.707) node[vertex, label=$x_2$,draw](x2){};

\draw (3,0) circle (1cm);
\draw (3,1) node[vertex, label=$v_2$,draw](v2){};
\draw (3+-.707,-.707) node[vertex,draw, label=$y_1$](y1){};
\draw (3.707,-.707) node[vertex, label=$y_2$,draw](y2){};

\draw (0,-2) node[vertex, 
label=below:{$l'$},draw](l'){};
\draw (1.5,-2) node[vertex, 
label=below:{$r$},draw](r){};
\draw (3,-2) node[vertex, 
label=below:{$l''$},draw](l''){};
\draw (2.5,-2.5) node[vertex, 
label=below:{$l$},draw](l){};
\draw (l')--(r)--(l'');
\draw (r)--(l);

\draw (x1)--(l')--(x2);
\draw (y1)--(l'')--(y2);
\draw (v2)--(v1) to[out=-45, in=100] (r);
\draw[dotted, line width=2pt] (v2)--(v1) to[out=-45, in=100] (r);
\draw[dotted, line width=2pt] (r)--(l);

\draw[dotted, line width=2pt] (x1)--(l')--(x2);
\draw[dotted, line width=2pt] (x1) arc (225:320:1cm);
\draw[dotted, line width=2pt] (y1) arc (225:320:1cm);
\draw[dotted, line width=2pt] (y1)--(l'')--(y2);
\end{tikzpicture}}
\caption{Claim \ref{clm:6}}
\end{figure}

\subsection{$\mathbf{|R|=3}$\label{three} } By Lemma~\ref{clm:7}, $R$ is a path, and by Claim~\ref{clm:4.6},  $|R|\geq3$. Next we
prove $|R|=3$. First, we prove a claim that will also be useful in later sections. 

\begin{claim}
\label{PC}Let $C$ be a cycle, $P=v_{1}v_{2}\dots v_{s}$ be a path in $R$,
and $1<i<s$. At most one of the following two statements holds.
\begin{itemize}
\item[(1)] (a) $\left\Vert x,v_{1}Pv_{i-1}\right\Vert \geq1$ for all $x\in C$
or (b) $\left\Vert x,v_{1}Pv_{i-1}\right\Vert \geq2$ for two $x\in C$;
\item[(2)]
(c) $\left\Vert y,v_{i}Pv_{s}\right\Vert \geq2$ for some $y\in C$
or (d) $N(v_{i})\cap C\ne\emptyset$ and $\left\Vert v_{i+1}Pv_{s},C\right\Vert \geq2$.
\end{itemize}
\end{claim}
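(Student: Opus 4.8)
The plan is to prove the contrapositive constructively: assuming that \emph{both} (1) and (2) hold, I would exhibit two vertex-disjoint cycles inside $G[V(C)\cup V(P)]$. Since $\mathcal C$ contains $k-1$ cycles, replacing $C$ in $\mathcal C$ by these two cycles yields $k$ disjoint cycles in $G$, contradicting \ref{o1}. Throughout I write $F:=v_1Pv_{i-1}$ for the \emph{front} and recall that, as $P\subseteq R$ is a path disjoint from every cycle of $\mathcal C$, the sets $V(F)$, $\{v_i,\dots,v_s\}$ and $V(C)$ are pairwise disjoint; moreover $F\ne\emptyset$ and $v_{i+1}Pv_s\ne\emptyset$ because $1<i<s$. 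Statement (1) provides one of two kinds of ``front attachment'' and (2) one of two kinds of ``back attachment'', so I would treat the four combinations, in each case building one cycle out of the front of $P$ and one out of the back.

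First suppose (2)(c) holds, so some $y\in C$ has neighbors $v_a,v_b\in v_iPv_s$; then $yv_aPv_by$ is a cycle meeting $C$ only in $y$. It remains to build a disjoint cycle from $F$ using a vertex of $C$ other than $y$. If (1)(b) holds, one of the two vertices having $\ge2$ neighbors in $F$, say $x\ne y$, gives the cycle $xv_\gamma Pv_\delta x$. If instead (1)(a) holds, let $p,q$ be the two $C$-neighbors of $y$, pick front-neighbors $v_\alpha$ of $p$ and $v_\beta$ of $q$ with $\alpha\le\beta$, and form $p(C-y)qv_\beta Pv_\alpha p$. In either case the second cycle uses only front vertices of $P$ and avoids $y$, so it is disjoint from the first.

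Next suppose (2)(d) holds but (2)(c) fails. Then no vertex of $C$ has two neighbors in $v_iPv_s$, so the neighbor $z$ of $v_i$ and the (at least two) $C$-endpoints of the edges from $v_{i+1}Pv_s$ are three \emph{distinct} vertices $z,z_1,z_2\in C$ with $v_i\sim z$, $v_c\sim z_1$, $v_d\sim z_2$ for some $c,d\in\{i+1,\dots,s\}$. The crucial structural feature is that $z_1,z_2$ can be joined through $v_{i+1}Pv_s$ by a path avoiding $v_i$, whereas $z$ is joined to the front through the edge $v_iz$ and the subpath $v_\alpha Pv_i$. Under (1)(a), let $v_\alpha$ be a front-neighbor of $z$ and take the ``bridge'' cycle $C_1:=zv_\alpha Pv_iz$ (meeting $C$ only in $z$, using only $v_\alpha,\dots,v_i$); for $C_2$ use $z_1,z_2$ together with the arc of $C$ between them that avoids $z$, closed through $v_{i+1}Pv_s$. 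These are disjoint because their $C$-parts are $\{z\}$ and an arc avoiding $z$, and their $P$-parts lie in $\{v_1,\dots,v_i\}$ and $\{v_{i+1},\dots,v_s\}$. Under (1)(b) the front cycle is a single doubly-attached vertex $x_j\in\{x_1,x_2\}$ closed entirely inside $F$, so $C_2$ may freely use all of $v_i,\dots,v_s$: choosing the two of the three anchors $z,z_1,z_2$ that differ from $x_j$ and the arc between them avoiding $x_j$ yields $C_2$ disjoint from the front cycle.

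The routine parts are the verifications that each displayed closed walk is an honest cycle of length at least three and that the chosen arcs are nonempty; these follow from $|C|\ge3$, Claim~\ref{clm:1}, and the index separation of front and back vertices. I expect the only genuine difficulty to be the last combination, (2)(d) with (1)(b): one must argue that the three anchors $z,z_1,z_2$ always leave room to route $C_2$ around whichever of $x_1,x_2$ the front cycle uses. In particular the troublesome configuration $\{x_1,x_2\}=\{z_1,z_2\}$ is handled precisely by letting $z$ (reached through the edge $v_iz$) serve as the third anchor, which is why condition (d) records both $N(v_i)\cap C\ne\emptyset$ and the two edges from $v_{i+1}Pv_s$ separately.
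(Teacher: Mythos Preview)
Your proof is correct and follows the same approach as the paper: both argue that if (1) and (2) hold simultaneously then $G[C\cup P]$ contains two disjoint cycles, splitting into the same cases (c) versus (d)-without-(c), and within the latter, (a) versus (b). The only difference is presentation---the paper tersely asserts that certain induced subgraphs (e.g., $G[v_1Pv_{i-1}\cup C-y]$ or $G[v_iPv_s\cup C-w]$) contain cycles, while you write the cycles out explicitly via the three-anchor argument; your reference to Claim~\ref{clm:1} in the final paragraph is unnecessary, as neither your constructions nor the paper's use it.
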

\begin{proof}
Suppose (1) and (2) hold. If (c) holds then the disjoint graphs $G[v_{i}Pv_{s}+y]$
and $G[v_{1}Pv_{i-1}\cup C-y]$ contain cycles. Else $(d)$ holds,
but (c) fails; say $z\in N(v_{i})\cap C$ and $z\notin N(v_{i+1}Pv_{s})$.
If (a) holds then $G[v_{1}Pv_{i}+z]$ and $G[v_{i+1}Pv_{s}\cup C-z]$
contain cycles. If (b) holds then $G[v_{1}Pv_{i-1}+w]$ and $G[v_{i}Pv_{s}\cup C-w]$
contain cycles, where $\left\Vert w,v_{1}Pv_{i-1}\right\Vert \geq2$.
\end{proof}
Suppose, for a contradiction, $|R|\geq4$. Say $R=a_{1}a'_{1}a''_{1}\dots a''_{2}a'_{2}a_{2}$.
It is possible that $a''_{1}\in\{a''_{2},a'_{2}\}$, etc. Set $e_{i}:=a_{i}a'_{i}=\{a_{i},a'_{i}\}$
and $F:=e_{1}\cup e_{2}$. 
\begin{claim}
\label{7edges} If $C\in\mathcal{C}$, $h\in[2]$ and $\left\Vert e_{h},C\right\Vert \geq\left\Vert e_{3-h},C\right\Vert $
then $\left\Vert C,F\right\Vert \leq7$; if $\left\Vert C,F\right\Vert =7$
then 
\[
|C|=3,\,\left\Vert a_{h},C\right\Vert =2,~\left\Vert a'_{h},C\right\Vert =3,~\left\Vert a''_{h}Ra_{3-h},C\right\Vert =2,~\mathrm{and}~N(a_{h})\cap C=N(e_{3-h})\cap C.
\]
\end{claim}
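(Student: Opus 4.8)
The plan is to assume, after relabeling if necessary, that $h=1$, so that $\|e_1,C\|\ge\|e_2,C\|$ and hence $\|e_1,C\|\ge\lceil\|C,F\|/2\rceil$. The driving principle is that two vertex-disjoint cycles lying inside $G[V(C)\cup V(R)]$ and avoiding the other members of $\mathcal C$ would, together with $\mathcal C-C$, give $k$ disjoint cycles and contradict \ref{o1}; Claim~\ref{PC} is exactly the packaged form of this principle for a front/back split of the path $R=v_1\cdots v_s$. I will also use repeatedly the elementary bounds from Claim~\ref{clm:1} (each of $a_1,a_1',a_2,a_2'$ sends at most $3$ edges to $C$; a count of $2$ forces $|C|\le 4$; a count of $3$ forces $|C|=3$) together with Claim~\ref{clm1.2} (the two endpoints of an edge of $R$ have disjoint neighborhoods on any $C$ with $|C|\ge4$). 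The argument splits on $|C|$, and the delicate extremal configuration lives entirely in the case $|C|=3$.

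First I would dispose of $|C|\ge4$. By Claim~\ref{clm:1}, $\|w,C\|\ge2$ forces $|C|\le4$, so if $|C|\ge5$ each of the four vertices sends at most one edge to $C$ and $\|C,F\|\le4$. For $|C|=4$ each vertex contributes at most $2$, and I claim $\|C,F\|\le6$. Assuming $\|C,F\|\ge7$ gives $\|e_1,C\|\ge4$, forcing $\|a_1,C\|=\|a_1',C\|=2$ with four distinct neighbors on $C$ by Claim~\ref{clm1.2}, hence covering all of $C$; applying Claim~\ref{PC} with the split placed just after $a_1'$ (front $\supseteq e_1$, back $\supseteq e_2$), alternative (a) of part (1) holds, while $\|e_2,C\|\ge3$ yields part (2) via (c) or (d), a contradiction. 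The degenerate cases $a_1''\in\{a_2',a_2\}$ (short $R$) are handled by taking the split at the appropriate vertex, or by exhibiting two disjoint $4$-cycles directly as in the $C_4$ configuration. Thus $\|C,F\|=7$ cannot occur when $|C|\ge4$, consistent with the stated implication.

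The heart of the proof is $|C|=3$; write $C=\{w_1,w_2,w_3\}$. I would first show $\|a_1,C\|\le2$: if $a_1$ dominated $C$, then forming the triangle on $a_1$ and any two of $w_1,w_2,w_3$ leaves over the third vertex $w_\ell$, and a second disjoint cycle exists as soon as $\|w_\ell,R-a_1\|\ge2$ (close up through the path $R-a_1$); since $w_\ell$ can be chosen to be any of the three, blocking this forces $\|C,R-a_1\|\le3$ and hence $\|C,F\|\le6$. So when $\|C,F\|=7$ we have $\|a_1,C\|\le2$, whence $\|e_1,C\|\in\{4,5\}$. I would rule out $\|e_1,C\|=4$ (equivalently $\|e_2,C\|=3$) by symmetric constructions—forming a triangle through $a_1a_1'$ and a common $C$-neighbor (which exists since $\|a_1,C\|,\|a_1',C\|$ are large in a triangle) and closing a second cycle through $e_2$ and the path $R-a_1-a_1'$—leaving only $\|a_1,C\|=2$, $\|a_1',C\|=3$, $\|e_2,C\|=2$. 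Finally, $\|a_1''Ra_2,C\|=2$ is $\|e_2,C\|=2$ together with the fact that no interior vertex of $a_1''Ra_2$ may touch $C$, and the adjacency $N(a_1)\cap C=N(e_2)\cap C$ is forced by two last constructions: if $e_2$ met the vertex $w_3=C\setminus N(a_1)$ (which the dominating $a_1'$ is adjacent to), then $w_3a_1'Ra_2w_3$ together with the triangle on $a_1$ and $N(a_1)\cap C$ would be disjoint cycles; and if both edges of $e_2$ landed on a single vertex of $N(a_1)\cap C$, then the triangles $a_2a_2'w_1a_2$ and $a_1a_1'w_2a_1$ would be disjoint.

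I expect the main obstacle to be the bookkeeping in the $|C|=3$ equality analysis: every deviation from the stated configuration must be converted into an explicit pair of disjoint cycles, and one must consistently verify that the two cycles are genuinely vertex-disjoint, lie in $C\cup R$, and remain connected through the correct subpath of $R$—which is precisely where the possible coincidences among $a_1'',a_2'',a_2'$ (short $R$) and the choice of which two $C$-vertices to place in the first triangle make the casework proliferate. Claim~\ref{PC} absorbs much of this, so the real task is to choose, in each branch, the split point (or the explicit triangles) that makes both of its alternatives fire.
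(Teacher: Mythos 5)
Your overall strategy is the paper's: bound $\|C,F\|$ and pin down the equality configuration by converting every deviation into two disjoint cycles inside $G[C\cup R]$, with Claim~\ref{PC} doing the packaging. Several steps are sound (the $|C|\ge5$ count, the proof that $\|a_1,C\|\le2$ when $\|C,F\|\ge7$, and your two final constructions forcing $N(a_1)\cap C=N(e_2)\cap C$ --- the second of which actually fills in a detail the paper glosses over). But two pivotal steps fail as stated. First, for $|C|=4$ you claim $\|e_2,C\|\ge3$ makes part (2) of Claim~\ref{PC} hold ``via (c) or (d)'' for the split just after $a_1'$. It need not: (c) asks for a vertex of $C$ with two neighbors on the back path, but Claim~\ref{clm1.2} applied to the edge $a_2a_2'$ says no vertex of a $4$-cycle sees both $a_2$ and $a_2'$, so the $\ge3$ edges from $e_2$ can land on three distinct vertices; and (d) needs $N(a_1'')\cap C\ne\emptyset$, which nothing guarantees. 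Concretely, if $R$ has an interior vertex (so $a_1''\ne a_2'$), take $N(a_1)\cap C=\{w_1,w_3\}$, $N(a_1')\cap C=\{w_2,w_4\}$, $N(a_2)\cap C=\{w_1,w_3\}$, $N(a_2')\cap C=\{w_2\}$, and no other edges from $R$ to $C$: both (c) and (d) fail, so Claim~\ref{PC} yields no contradiction, even though disjoint cycles do exist here ($w_1w_2a_2'a_2w_1$ and $w_3a_1a_1'w_4w_3$). The paper avoids this with a direct construction: the two neighbors of $a_2$ on a $4$-cycle are nonadjacent (Claim~\ref{clm:1}(c)) and disjoint from $N(a_2')\cap C$, so some edge $yz$ of $C$ has $y\in N(a_2)$, $z\in N(a_2')$, giving the $4$-cycle $yza_2'a_2y$; then (1)(a) supplies a second, disjoint cycle in $G[a_1Ra_2''\cup C-y-z]$.

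Second, for $|C|=3$ your device for eliminating $\|e_1,C\|=4$, $\|e_2,C\|\ge3$ --- a triangle through $a_1a_1'$ and a common $C$-neighbor, plus a cycle through $e_2$ in what remains --- can collide: the unique common neighbor of $a_1,a_1'$ may be exactly the vertex that every cycle through $e_2$ must use. With $N(a_1)\cap C=\{w_1,w_3\}$, $N(a_1')\cap C=\{w_1,w_2\}$, $N(a_2)\cap C=\{w_1,w_2\}$, $N(a_2')\cap C=\{w_1\}$, and interior vertices of $R$ untouched, your first triangle must be $a_1a_1'w_1a_1$, and no cycle through $e_2$ avoids $w_1$; your construction finds nothing, although $a_1w_1w_3a_1$ and $w_2a_1'Ra_2w_2$ are disjoint cycles. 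The paper's fix is a two-stage use of Claim~\ref{PC}: with the split at $v_i=a_2'$, (1) holds because $\|e_1,C\|\ge4$, so (c) must fail, i.e.\ $a_2,a_2'$ have no common neighbor on $C$; hence the $\ge3$ edges from $e_2$ are spread one per vertex of $C$, and then reversing the path and splitting at $a_1'$ makes (a) hold (every vertex of $C$ meets $e_2$, which lies in the new front) and (c) hold (some vertex of $C$ is a common neighbor of $a_1,a_1'$), a contradiction. Finally, you assert as a ``fact'' that no interior vertex of $a_1''Ra_2$ touches $C$; this too requires proof --- the paper shows any neighbor $c\in C\smallsetminus N(a_1)$ of a vertex $b\in R-e_1$ yields the disjoint cycles inside $G[a_1'Ra_2+c]$ and $a_1(C-c)a_1$, and then one more application of Claim~\ref{PC} with front $e_1$ caps $\|a_1''Ra_2,C\|$ at $2$. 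So the skeleton is right, but in each of these branches the pair of disjoint cycles has to be chosen more carefully than your sketch does, and that choice is exactly the content of the paper's proof.
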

\begin{proof}
We will repeatedly use Claim~\ref{PC} to obtain a contradiction
to \ref{o1} by showing that $G[C\cup R]$ contains two disjoint cycles.
Suppose $\left\Vert C,F\right\Vert \geq7$ and say $h=1$. Then $\|e_{1},C\|\geq4$.
There is $x\in e_{1}$ with $\left\Vert x,C\right\Vert \geq2$.
Thus $|C|\leq4$ by Claim~\ref{clm:1}, and if $|C|=4$ then no vertex in
$C$ has two adjacent neighbors in $F$. Then (1) holds with $v_1=a_1$ and $v_{i}=a'_{2}$,
even when $|C|=4$. 

If $\left\Vert e_{1},C\right\Vert =4$, as is the case when $|C|=4$,
then $\left\Vert e_{2},C\right\Vert \geq 3$. If $|C|=4$ there is a cycle $D:=yza_{2}'a_{2}y$
for some $y,z\in C$. As (a) holds, $G[a_{1}Ra''_{2}\cup C-y-z]$
contains another disjoint cycle. Thus, $|C|=3$. As (c) must fail with
$v_{i}=a'_{2}$, (a) and (c) hold for $v_{i}=a'_{1}$ and $v_{1}=a_{2}$,
a contradiction. Then $\left\Vert e_{1},C\right\Vert \geq5$. If $\left\Vert a_{1},C\right\Vert =3$
then $(a)$ and (c) hold with $v_1=a_1$ and $v_{i}=a'_{1}$. Now $\left\Vert a_{1},C\right\Vert =2$,
$\left\Vert a'_{1},C\right\Vert =3$ and $\left\Vert a''_{1}Ra_{2},C\right\Vert \geq 2$.
If there is $b \in P-e_1$ and $c\in N(b)\cap V(C)\smallsetminus N(a_{1})$
then $G[a'_{1}Ra_{2}+c]$ and $G[a_{1}(C-c)a_{1}]$ both contain cycles.
For every $b \in R-e_1$, $N(b) \cap C \subseteq N(a_1)$. Then if $\|a_1''Ra_2,C\| \geq 3$, (c) holds for $v_1=a_1$ and $v_1=a_1''$, contradicting that (1) holds.
Now $\|a_1''Ra_2,C\|=\|e_1,C\|=2$ and $N(a_{1})=N(e_{2})$.\end{proof}

\begin{lem}
\label{R=00003D3}$|R|=3$ and $m:=\max \{|C|:C\in \mathcal C\}=4$. \end{lem}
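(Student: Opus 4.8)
The plan is to assume $|R|\ge 4$, reach a contradiction to establish $|R|=3$, and then to pin down $\max_{C\in\mathcal C}|C|$ separately.

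For the first part I keep the notation $e_1=a_1a_1'$, $e_2=a_2a_2'$ and $F=e_1\cup e_2$ introduced for Claim~\ref{7edges}. Since $R$ is an induced path on at least four vertices, the four vertices of $F$ are distinct, $a_1'$ and $a_2'$ are internal (so $\|F,R\|=1+2+2+1=6$), and the two pairs $\{a_1,a_2'\}$ and $\{a_1',a_2\}$ are nonadjacent. Applying \ref{H2} to these two disjoint pairs gives $\|F,V\|=d(a_1)+d(a_2')+d(a_1')+d(a_2)\ge 2(4k-3)=8k-6$, whence $\|F,\mathcal C\|\ge 8k-12$. On the other hand, for each $C\in\mathcal C$ I choose $h\in[2]$ with $\|e_h,C\|\ge\|e_{3-h},C\|$ and invoke Claim~\ref{7edges} to get $\|C,F\|\le 7$, so $\|F,\mathcal C\|=\sum_{C}\|C,F\|\le 7(k-1)$. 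Combining the two bounds yields $8k-12\le 7k-7$, i.e. $k\le 5$; this disposes of all $k\ge 6$ at once.

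The cases $k\in\{3,4,5\}$ are where the counting alone is not decisive, and I expect them to be the main obstacle. Here one must exploit the rigid extremal structure that Claim~\ref{7edges} forces on the \emph{heavy} cycles with $\|C,F\|=7$, namely a triangle $C$ dominated by $a_h'$, with $\|a_h,C\|=2$ and $N(a_h)\cap C=N(e_{3-h})\cap C$. For each such configuration I would either exhibit two disjoint cycles inside $R\cup C$ (contradicting \ref{o1}), or a replacement improving one of the parameters \ref{o2}--\ref{o4}, or an independent set of size exceeding $|G|-2k$ (contradicting \ref{H3}). For $k=3$ the configurations that survive every such move are exactly $\mathbf Y_1$ and $\mathbf Y_2$ (note that $\mathbf Y_1$ genuinely admits an optimal $\mathcal C$ with $|R|=4$: two triangles in the $K_6$ left after deleting the endpoints of the subdivided edge, with $R$ the induced $4$-path through the two subdivision vertices). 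These are excluded by the standing assumption that $G$ is a counterexample to Theorem~\ref{main}, i.e. $G\notin\{\mathbf Y_1,\mathbf Y_2\}$.

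Once $|R|=3$, write $R=a_1a_0a_2$. Since $\sum_{C\in\mathcal C}|C|=|G|-3\ge 3k-2>3(k-1)$, not every cycle is a triangle, so $m\ge 4$. For $m\le 4$ I first observe that by \ref{o2} every $C\in\mathcal C$ is chordless, as a chord would split off a strictly shorter cycle on $V(C)$ and lower $\sum_C|C|$. Suppose for contradiction that some chordless $C^*$ has $|C^*|\ge 5$, and take three consecutive vertices $c_1c_2c_3$, so $c_1c_3\notin E$. By Claim~\ref{clm:1} each vertex of $R$ has at most one neighbor on $C^*$, so $\|\{c_1,c_3\},R\|\le\|R,C^*\|\le 3$ while $\|\{c_1,c_3\},C^*\|=4$; hence \ref{H2} applied to $\{c_1,c_3\}$ forces $\|\{c_1,c_3\},\mathcal C-C^*\|\ge 4k-10$. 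This concentration of edges into the other cycles should permit a reroute: using that $c_2$ has no neighbor on $C^*$ other than $c_1,c_3$, together with a cycle $D$ on which $c_1$ or $c_3$ has two neighbors, I would rebuild $C^*,D$ into a collection with the same number of cycles but smaller total length (contradicting \ref{o2}) or into $k$ disjoint cycles (contradicting \ref{o1}). Making the reroute succeed in every placement of the two neighbors on $D$, and eliminating the borderline case in which only $\|\{c_1,c_3\},D\|=2$ ever occurs, is the delicate point in this half of the argument.
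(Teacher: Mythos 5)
Your opening move is sound and is essentially the paper's: assume $|R|\ge4$, set $F=e_1\cup e_2$, bound $\|F,V\|$ from below by $2(4k-3)$ via the two nonadjacent pairs $\{a_1,a_2'\}$, $\{a_1',a_2\}$, and from above via Claim~\ref{7edges}. But your count is cruder than the paper's, and the difference matters. The paper additionally tracks $t:=|\{C\in\mathcal C:\|F,C\|\le6\}|$ and $r:=|\{C\in\mathcal C:|C|\ge5\}|$, noting by Claim~\ref{clm:1} that $\|F,C\|\le4$ whenever $|C|\ge5$; the same inequality then reads $2(4k-3)\le 7(k-1)-t-2r+6$, giving $t+2r\le 5-k$, hence immediately $r=0$ and $t\le2$ (with $t=2$ only when $k=3$). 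This refinement does two jobs your sketch leaves undone: it kills all cycles of length at least $5$ inside the counting itself, and it collapses ``the cases $k\in\{3,4,5\}$'' into exactly two structured situations --- either two cycles satisfy $\|F,C_i\|=7$, or $k=3$ and $\mathcal C=\{C,D\}$. Your bound $k\le5$ alone leaves a much larger configuration space to clear.

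The genuine gap is that everything after the counting is a declaration of intent rather than a proof. The bulk of the paper's argument --- the choice of $R$ by (P1) and (P2), the auxiliary claims about low vertices, and the long case analysis (Cases 1, 2.1, 2.2, 2.3), in each branch exhibiting two disjoint cycles inside $R\cup C$, an improvement with respect to \ref{o1}--\ref{o4}, or the identification $G=\mathbf Y_1$ --- is precisely what you defer with ``I would either exhibit \dots or a replacement \dots or an independent set.'' The same holds for your second half: the reroute argument for a chordless $C^*$ with $|C^*|\ge5$ is only sketched, and its quantitative basis is weak (for $k\in\{3,4\}$ the bound $\|\{c_1,c_3\},\mathcal C-C^*\|\ge 4k-10$ does not even force some $D$ with $\|\{c_1,c_3\},D\|\ge4$). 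Two further inaccuracies: $\mathbf Y_2$ never arises in the $|R|\ge4$ analysis (any optimal $\mathcal C$ in $\mathbf Y_2$ consists of a triangle through the $K_1$-vertex and a $4$-cycle, so $|R|=3$; it surfaces only later, in the proof of Lemma~\ref{2cases}), so only $\mathbf Y_1$ must be confronted here; and excluding it is not a one-line appeal to the hypotheses --- recognizing that the surviving configuration is $\mathbf Y_1$ is itself a substantial part of the paper's Case 2.2. As written, the proposal reproduces the paper's first paragraph and replaces the remaining several pages with plans.
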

\begin{proof}

Let $t=|\{C\in\mathcal{C}:\|F,C\|\le6\}|$ and $r=|\{C\in\mathcal{C}:|C|\ge5\}|$.  It suffices to show $r=0$ and $|R|=3$: then $m\le4$, and $|V(\mathcal{C})| = |G|- |R|\ge3(k-1)+1$ implies some $C\in\mathcal C$ has length $4$. Choose $R$ so that:\\
 (P1) $R$ has as few low vertices as possible,
and subject to this, \\
(P2) $R$ has a low end if possible.

Let $C \in \mathcal C$. By Claim~\ref{7edges}, $\|F,C\|\le 7$. By  Claim \ref{clm:1}, if $|C|\ge5$ then $\|a,C\|\le1$ for all $a\in F$; so
 $\|F,C\|\le4$. Thus $r\le t$. Hence
\begin{equation}
2(4k-3)\le \left\Vert F,(V\smallsetminus R)\cup R\right\Vert
\le7(k-1)-t-2r+6\le7k-t-2r-1. 
\label{eq:7edges}
\end{equation}
Therefore, $5-k\ge t+2r\ge3r\ge0$. 
Since $k \geq 3$, this yields $3r \leq t+2r \leq 2$, so
 $r=0$ and $t\le 2$, with $t=2$ only if $k=3$. 

CASE 1: $k-t \geq 3$.  That is, there exist distinct cycles $C_1,C_2\in\mathcal{C}$ with $\|F,C_i\|\ge7$. 
In this case, $t \le 1$:
 if $k=3$ then $\mathcal C=\{C_1,C_2\}$ and $t=0$; if $k>3$ then $t<2$.
For  both $i\in[2]$, Claim~\ref{7edges} yields $\left\Vert F,C_i\right\Vert =7$,  $|C_i|=3$, and
there is $x_i\in V(C_i)$ with $\left\Vert x_i,R\right\Vert =1$
and $\left\Vert y,R\right\Vert =3$ for both $y\in V(C_i-x_i)$.
Moreover, there is a  unique index $j=\beta(i)\in[2]$ with $\left\Vert a'_{j},C_{i}\right\Vert =3$.
For $j \in [2]$, put $I_{j}:=\{i\in[2]:\beta(i)=j\}$; that is, $I_j=\{i \in [2] : \|a_j',C_i\|=3\}$.
 Then $V(C_{i})-x_{i}= N(a_{\beta(i)})\cap C_i=N(e_{3-\beta(i)})\cap C_i$.
 As $x_{i}a_{\beta(i)}\notin E$,
one of $x_{i},a_{\beta(i)}$ is high. As we can switch $x_{i}$ and
$a_{\beta(i)}$ (by replacing $C_i$ with $a_{\beta(i)}(C_i-x_i)a_{\beta(i)}$ and $R$ with $R-a_{\beta(i)}+x_i$), we may assume $a_{\beta(i)}$ is high. 

Suppose $I_{j}\ne\emptyset$ for both $j\in[2]$; 
say $\|a_1',C_1\|=\|a_2',C_2\|=3$.
Then  for all $B\in \mathcal C$ and $j\in[2]$, $a_j$ is high, and either $\|a_j,B\|\le2$ or $\|F,B\|\leq6$. Since $t \leq 1$, we get
\[2k-1\le d(a_j)=\|a_j,B\cup F\|+\|a_j,\mathcal C-B\|\le\|a_j,B\|+1+2(k-2)+t\le 2k-2+\|a_j,B\|.\]
Thus $N(a_j)\cap B\ne\emptyset$ for all $B\in \mathcal C$. Let $y_{j}\in N(a_{3-j})\cap C_j$.
 Then using Claim~\ref{7edges}, $y_j \in N(a_{j})$, and
  $a'_{1}(C_{1}-y_{1})a'_{1},$ $a'_{2}(C_{2}-y_{2})a'_{2},$ $a_{1}y_{1}a_{2}y_{2}a_{1}$ beats $C_1,C_2$ by \ref{o1}.

Otherwise, say $I_{1}=\emptyset$.   If $B\in\mathcal C$ with $\|F,B\|\le6$ then
 $\|e_1,B\|+2\|a_2,B \| \le \|F,B\|+\|a_2,B\| \le9$. Thus, using Claim~\ref{7edges},
\begin{align*}
2(4k-3)&\le d(a_1) +d(a'_1)+2d(a_2) = 
5+\|e_1,\mathcal C\|+2\|a_2,\mathcal C\|
 \le5+ 6(k-1-t)+9t
 \\\Rightarrow  2k&\leq 
 5+3t.
\end{align*}
Since $k-t \geq 3$ (by the case), we see 
$3(k-t)+(5+3t) \geq 3(3)+2k$ and so $k \geq 4$.
Since $t \leq 1$, in fact $k=4$ and $t=1$, and equality holds throughout: say  $B$ is the unique cycle in $\mathcal C$ with $\|F,B\|\le6$. 
 Then $\|a_2,B\|=\|e_1,B\|=3$. 
 Using Claim~\ref{7edges}, $d(a_1)+d(a_1') = \|e_1,R\|+\|e_1,\mathcal C - B\|+\|e_1,B\| = 3+4+3=10$, and
 $d(a_1),d(a_2) \geq (4k-3) - d(a_2) = 13-(1+4+3)=5$, so
 $d(a_1)=d(a_2)=5$. Note $a_1$ and $a_2$ share no neighbors: they share none in $R$ because $R$ is a path, they share none in $\mathcal C - B$ by Claim~\ref{7edges}, and they share no neighbor $b \in B$ lest $a_1a_1'ba_1$ and $a_2(B-b)a_2$ beat $B$ by \ref{o1}. Thus every vertex in $V-e_1$ is high.
 
Since $\|e_1,B\| =3$, first suppose $\|a_1,B\| \geq2$, say $B-b \subseteq N(a_1)$. Then 
$a_1(B-b)a_1$, $a_1'a_2'a_1b$ beat $B,R$ by (P1) (see Figure~\ref{2.14subfig1.1}). Now suppose $\|a_1',B\|\geq 2$, this time with 
 $B-b \subseteq N(a_1')$. Since $d(a_1)=5$ and $\|a_1,R \cup B\| \leq 2$, there exists $c \in C \in \mathcal C -B$ with $a_1c \in E(G)$. Now $c \in N(a_2)$ by Claim~\ref{7edges}, so
 $a_1'(B-b)a_1'$, $a_2'(C-c)a_2'$, and $a_1ca_2b$ beat $B,C$, and $R$ by (P1)  (see Figure~\ref{2.14subfig1.2}). 
 
\begin{figure}[ht]
\subcaptionbox{\label{2.14subfig1.1}}
{\begin{tikzpicture}
\foreach \x in {0,...,3}
{\draw (1.5*\x,0) node[vertex](p\x){};}
\draw (p0)--(p1)--(p2)--(p3);
\draw (p0) node[label=below left:$a_1$] {};
\draw (p1) node[label=below:$a_1'$] {};
\draw (p2) node[label=below:$a_2'$] {};
\draw (p3) node[label=below right:$a_2$] {};
\foreach \x in {0,1,2}
{\draw (2.25,-2) + (-30+120*\x:.6cm) node[vertex](b\x){};
\draw[ultra thin] (p3)--(b\x);}
\draw (b0)--(b1)--(b2)--(b0);
\draw (b0) node[label=right:$b$]{};
\draw (b2)--(p0)--(b1);
\draw[dashed, line width=2pt] (b2)--(p0)--(b1)--(b2);
\draw[dashed, line width=2pt] (p1)--(p2)--(p3)--(b0);
\end{tikzpicture}}
\hskip 2cm
\subcaptionbox{\label{2.14subfig1.2} }
{\begin{tikzpicture}
\foreach \x in {0,...,3}
{\draw (1.5*\x,0) node[vertex](p\x){};}
\draw (p0)--(p1)--(p2)--(p3);
\draw (p0) node[label=below:$a_1$] {};
\draw (p1) node[label=below left:$a_1'$] {};
\draw (p2) node[label=below:$a_2'$] {};
\draw (p3) node[label=below right:$a_2$] {};
\foreach \x in {0,1,2}
{\draw (2.25,-2) + (-30+120*\x:.6cm) node[vertex](b\x){};
\draw[ultra thin] (p3)--(b\x);}
\draw (b0)--(b1)--(b2)--(b0);
\draw (b0) node[label=right:$b$]{};
\foreach \x in {0,1,2}
{\draw (3,1.5) + (-30+120*\x:.6cm) node[vertex](d\x){};}
\draw (d0)--(d1)--(d2)--(d0);
\foreach \x in {0,1,2}{\draw[ultra thin] (d\x)--(p2);}
\foreach \x in {0,2}{\draw[ultra thin] (d\x)--(p3);}
\draw (b2)--(p1)--(b1);
\draw (p0)--(d2);
\draw[dashed, line width=2pt] (b2)--(p1)--(b1)--(b2);
\draw[dashed, line width=2pt] (p0)--(d2)--(p3)--(b0);
\draw (d2) node[label=above left:$c$]{};
\draw[dashed, line width=2pt] (p2)--(d1)--(d0)--(p2);
\end{tikzpicture}}
\caption{Lemma \ref{R=00003D3}, Case 1}
\end{figure}

CASE 2:  $k-t\leq 2$. That is, $\|F,C\|\leq 6$ for all but at most one $C \in \mathcal C$. Then, since $5-k\geq t$, we get $k=3$ and $\left\Vert F,V\right\Vert \leq19$. Say $\mathcal{C}=\{C,D\}$, so
$\|F,C\cup D\|\ge2(4k-3) -\|F,R\|=2(4\cdot3-3)-6=12$. By Claim~\ref{7edges}, 
$\|F,C\|$, $\|F,D\|\le7$. Then $\|F,C\|$, $\|F,D\|\ge5$. If $|R|\geq 5$, then for the (at most two) low vertices in $R$, we can
choose distinct vertices in $R$ not adjacent to them. Then
$\|R,V-R\| \geq  5|R|-2-\|R,R\| =3|R|$. Thus we may assume $\|R,C\|\geq
 \lceil 3|R|/2\rceil \geq |R|+3\geq 8$. Let $w'\in C$ be such that  $q=\|w',R\|=\max\{\|w,R\|\,:\,w\in C\}$.
Let $N(w')\cap R=\{v_{i_1},\ldots,v_{i_q}\}$ with $i_1<\ldots<i_q$. Suppose $q\geq 4$. If $\|v_1Rv_{i_2}, C-w'\|\geq 2$
or $\|v_{i_2+1}Rv_{s}, C-w'\|\geq 2$, then $G[C\cup R]$ has two disjoint cycles. Otherwise, $\|R,C-w'\|\leq 2$,
contradicting $\|R,C\|\geq |R|+3$. Similarly, if $q=3$, then
$\|v_1Rv_{i_2-1}, C-w'\|\leq 1$ and  $\|v_{i_2+1}Rv_{s}, C-w'\|\leq 1$ yielding
$\|v_{i_2},C\| = \|R,C\|-\|(R-v_{i_2}),C-w'\|-\|R-v_{i_2},w'\|\geq (|R|+3)-2-(3-1)\geq 4$, a contradiction to Claim~\ref{clm:1}(a). Therefore, $q\leq 2$, and hence
$|R|+3\leq\|R,C\|\leq 2|C|$. It follows that $|R|=5$, $|C|=4$ and $\|w,R\|=2$ for each $w\in C$.
This in turn yields that $G[C\cup R]$ has no triangles and $\|v_i,C\|\leq 2$ for each $i\in [5]$.
By  Claim~\ref{7edges}, $\|F,C\|\leq 6$, so $\|v_3,C\|=2$. Thus we may assume that for some $w\in C$,
$N(w)\cap R=\{v_1,v_3\}$. Then $\|e_2,C\|=\|e_2,C-w\|\leq 1$, lest there exist a cycle disjoint from $wv_1v_2v_3w$ in $G[C \cup R]$. Therefore, $\|e_1,C\|\geq 8-1-2=5$, 
a contradiction to Claim~\ref{clm:1}(b). This yields $|R|\leq 4$.


\begin{claim}
\label{L-cl}Either $a_{1}$ or $a_{2}$ is low. \end{claim}
\begin{proof}
Suppose $a_{1}$ and $a_{2}$ are high. Then since $\left\Vert R,V\right\Vert \leq19$, we may assume
 $a'_{1}$ is low. Suppose
there is $c\in C$ with $ca_{2}\in E$ and $\left\Vert a_{1},C-c\right\Vert \geq 2$. If $a'_{1}c\in E$, then
$R\cup C$ contains two disjoint cycles; 
so $a'_{1}c\notin E$ and hence $c$ is high. Thus either $a_{1}(C-c)a_{1}$ is
shorter than $C$ or  the pair $a_{1}(C-c)a_{1}$,
$ca_{2}a'_{2}a'_{1}$ beats $C,R$ by $(P2)$. Thus if $ca_{2}\in E$ then $\left\Vert a_{1},C-c\right\Vert \leq 1$. 
As $a_{2}$
is high, $\left\Vert a_{2},C\right\Vert \geq1$ and hence $\left\Vert a_{1},C\right\Vert  = \|a_1,C\smallsetminus N(a_2)\|+\|a_1,N(a_2)\|\leq2$. Similarly, $\left\Vert a_{1},D\right\Vert \leq2$.
Since $a_1$ is high, we see 
 $\left\Vert a_{1},C\right\Vert =\left\Vert a_{1},D\right\Vert =2$,
and $d(a_{1})=5$. Hence 
\begin{equation}\label{0313}
N(a_{2})\cap C\subseteq N(a_{1})\cap C\quad\mbox{and}\quad N(a_{2})\cap  D\subseteq N(a_{1})\cap  D.
\end{equation}
As $a_2$ is high, $d(a_{2})=5$ and in~\eqref{0313} equalities hold. Also $d(a_{1}')=4\leq d(a'_{2})$.

If there are $c\in C$ and $i\in[2]$ with $ca_{i},ca'_{i}\in E$
then by \ref{o2}, $|C|=3$. Also 
$ca'_{i}a{}_{i}c$, $a_{3-i}'a_{3-i}(C-c)$ beats $C,R$
by either (P1) or  (P2). (Recall $N(a_1) \cap C = N(a_2) \cap C$ and neighbors of $a_2$ in $C$ are high.) Then $N(a_{i})\cap N(a_{i}')=\emptyset$. Thus the set
$N(a_1)-R=N(a_2)-R$ contains no low vertices.
Also, if $\left\Vert a'_{1},C\right\Vert \geq1$
then $|C|=3$: else $C$ has the form $c_{1}c_{2}c_{3}c_{4}c_{1}$,
where $a_{1}c_{1},a_{1}c_{3}\in E$, and so $a_{1}a_{1}'c_{1}c_{2}a_{1}$,
$c_{3}c_{4}a_{2}a'_{2}$ beats $C,R$ by either (P1) or (P2). 
Thus $|C|=3$ and $a'_{1}c\in E$ for some $c\in V(C)-N(a_{1})$. 
If $\|a_2',C\| \geq 1$, we have disjoint cycles
$ca'_{1}a'_{2}c$,
$a_{1}(C-c)a_{1}$ and $D$. Then $\|a_1',C\|=0$, so $d(a_1') \leq2+|D \smallsetminus N(a_1)| \leq 4$. Now $a_1'$ and $a_2'$ are symmetric, and we have proved that $\left\Vert a'_{1},C\right\Vert+ \|a_2',C\| \leq 1$. Similarly, $\left\Vert a'_{1},D\right\Vert + \|a_2',D\| \leq 1$, a contradiction to $d(a'_1),d(a'_2)\geq 4$.
\end{proof}

By Claim~\ref{L-cl}, we can choose notation so that $a_{1}$ is low.
\begin{claim}
\noindent \label{C_high} If $a'_{1}$
is low then each  $v\in V\smallsetminus e_{1}$ is high. \end{claim}
\begin{proof}
\noindent Suppose $v \in V-e_1$ is low. Since $a_1$ is low, all vertices in $R-e_1$ are high, so $v\in C$ for some $C \in \mathcal C$.
Then $C':=ve_{1}v$ is a cycle and so by \ref{o2}, $|C|=3$. 
Since $a_{2}$ is high, $\left\Vert a_{2},C\right\Vert \geq1$. As $v$ is low, $va_{2}\notin E$.
Since
$a'_{1}$ is low, it is adjacent to the low vertex $v$, and $\left\Vert a'_{1},C-v\right\Vert \leq1$.
Then $C',a_{2}'a_{2}(C-v)$ beats $C,R$ by (P1).\end{proof}

\begin{claim}
\label{RC3cl} If $|C|=3$ and $\left\Vert e_{1},C\right\Vert $, $\left\Vert e_{2},C\right\Vert \geq3$,
then either\\
 (a) $\left\Vert c,e_{1}\right\Vert =1=\left\Vert c,e_{2}\right\Vert $
for all $c\in V(C)$ or\\
 (b) $a_{1}'$ is high and there is $c\in V(C)$
with $\left\Vert c,R\right\Vert =4$ and $C-c$ has a low vertex. \end{claim}
\begin{proof}
If $(a)$ fails then $\left\Vert c,e_{i}\right\Vert =2$ for some
$i\in[2]$ and $c\in C$. 
 If $\left\Vert e_{3-i},C-c\right\Vert \geq2$
then there is a cycle $C'\subseteq C\cup e_{3-i}-c$, and $R\cup C$ contains disjoint cycles $ce_{i}c$ and $C'$.
Else, 
$$\left\Vert c,R\right\Vert =\left\Vert c,e_i\right\Vert+(\left\Vert C,e_{3-i}\right\Vert-\left\Vert C-c,e_{3-i}\right\Vert) \geq 2+(3-1)=4=|R|.$$
If $C-c$ has no low vertices then $ce_{1}c$, $e_{2}(C-c)$ beats $C,R$ by (P1). Then $C-c$ contains a low vertex $c'$. If $a_{1}'$ is low then $c'a_{1}'a_{1}c'$ and $ca_{2}a_{2}'c$ are disjoint cycles.
Thus, (b) holds. 
\end{proof}
CASE 2.1: $|D|=4$. 
By \ref{o2},
$G[R\cup D]$ does not contain a $3$-cycle.  Then $5\leq d(a_{2})\leq3+\left\Vert a_{2},C\right\Vert \leq6$.
Thus $d(a_{1}),d(a_{1}')\geq3$.

Suppose $\left\Vert e_{1},D\right\Vert \geq3$.  Pick
$v\in N(a_{1})\cap D$ with minimum degree, and $v'\in N(a_{1}')\cap D$.
Since  $N(a_{1})\cap D$
and $N(a'_{1})\cap D$ are nonempty, disjoint and independent, we see $vv'\in E$.
Say $D=vv'ww'v$. As $D=K_{2,2}$ and low vertices are adjacent, $D':=a_{1}a_{1}'v'va_{1}$
is a $4$-cycle and $v$ is the only possible low vertex in $D$.
Note $a_{1}w\notin E$: else $a_{1}ww'va_{1}$, $v'a'_{1}a'_{2}a_{2}$ 
beats $D,R$ by (P1). As $\left\Vert e_{1},D\right\Vert \geq3$,
$a'_{1}w'\in E$. Also note $\left\Vert e_{2},ww'\right\Vert =0$:
else $G[a_{2},a_{2}',w,w']$ contains a $4$-path $R'$, and $D',R'$
 beats $D,R$ by (P1). Similarly, replacing
$D'$ by $D'':=a_{1}a_{1}'w'va_{1}$ yields $\|e_2,v'\|=0$. Then
$\left\Vert e_1\cup e_2,D\right\Vert \leq3+1=4$, a contradiction.
Thus
\begin{equation}\label{03132}
\left\Vert e_{1},D\right\Vert \leq2\quad\mbox{and so}\quad \left\Vert R,D\right\Vert\leq 6.
\end{equation}

Suppose $d(a'_{1})=3$. Then $\left\Vert a'_{1},D\right\Vert \leq1$.
Then there is $uv\in E(D)$ with $\left\Vert a'_{1},uv\right\Vert =0$.
Thus $d(u),d(v),d(a_{2})\geq6$, and $\left\Vert a_{2},C\right\Vert =3$.
Now $|C|=3$, $|G|=11$, and there is $w\in N(u)\cap N(v)$. If $w\in C$
put $C'=a_{2}(C-w)a_{2}$; else $C'=C$. In both cases, $|C'|=|C|$ and $|wuvw|=3<|D|$,
so $C'$, $wuvw$ beats $C,D$ by \ref{o2}.
Thus $d(a'_{1})\geq 4$.
 If $d(a_{1})=3$ then $d(a_{2}),d(a_{2}')\geq 9-3=6$, 
and $\|a_2 ,C\|\ge3$. By~\eqref{03132},
$$\left\Vert R,C\right\Vert \geq 3+4+6+6-\left\Vert R,R\right\Vert- \left\Vert R,D\right\Vert\geq 19-6-6=7,$$ 
contradicting Claim~\ref{7edges}.
Then $d(a_{1})=4\leq d(a'_{1})$ and by~\eqref{03132}, $\left\Vert e_{1},C\right\Vert \geq3$.
Thus~\eqref{03132} fails for $C$ in place of $D$; so $|C|=3$.
As $\left\Vert a_{2},C\right\Vert \geq2$ and $\left\Vert a'_{2},C\right\Vert \geq1$, Claim~\ref{RC3cl} implies 
either (a) or (b) of Claim~\ref{RC3cl} holds. If (a) holds then (a) and (d) of Claim~\ref{PC} both hold, and so $G[C\cup R]$ has two disjoint cycles. Else, Claim~\ref{RC3cl} gives $a'_1$ is high and there is $c\in\mathcal C$ with $\|c,R\|=4$. As $a'_1$ is high, $\|R,C\|\ge7$. Now  $\|c,R\|=4$ contradicts Lemma~\ref{7edges}.
 
CASE 2.2: $|C|=|D|=3$  and $\left\Vert R,V\right\Vert =18$.
Then $d(a_{1})+d(a'_{2})=9=d(a_{1}')+d(a_{2})$, $a_{1}$ and $a'_{1}$
are low, and by Claim~\ref{C_high} all other vertices are high.
$ $Moreover, $d(a'_{1})\leq d(a{}_{1})$, since 
\[
18=\left\Vert R,V\right\Vert =d(a_{1}')-d(a_{1})+2d(a_{1})+d(a_{2}')+d(a{}_{2})\geq d(a_{1}')-d(a_{1})+9+9.
\]

Suppose $d(a_{1}')=2$. Then $d(v)\geq 7$ for all $v\in V-a_{1}a_{1}'a'_{2}$. In particular, $C \cup D \subseteq N(a_2)$.
If $d(a_{1})=2$ then $d(a'_{2})\geq 7$, and $G=\mathbf{Y_{1}}$. Else
$\left\Vert a_{1},C\cup D\right\Vert \geq2$. If there is $c\in {C}$
with $V(C)-c\subseteq N(a_{1})$, then $a_{1}(C-c)a_{1}$, $a'_{1}a'_{2}a_{2}c$
 beats $C,R$ by (P1). Else $d(a_{1})=3,$ $d(a'_{2})=6$,
and there are $c\in C$ and $d\in D$
with $c,d\in N(a_{1})$. If $ca'_{2}\in E$ then $C\cup R$ contains
disjoint cycles $a_{1}ca_{2}'a_{1}'a_{1}$ and
$a_{2}(C-c)a_{2}$, so assume not.
Similarly, assume $da'_{2}\notin E$. Since $d(d) \geq 7$ and $a_1',a_2' \not\in N(d)$, we see $cd \in E(G)$. Then there are three disjoint
cycles $a'_{2}(C-c)a'_{2}$, $a_{2}(D-d)a_{2}$, and $a_{1}cda_{1}$.$ $
Thus $d(a'_{1})\geq3$.

Suppose $d(a'_{1})=3$. Say  
$a'_{1}v\in E$ for some $v\in D$. As $d(a_{2})\geq6$, $\left\Vert a_{2},D\right\Vert \geq2$.
Then $e_{2}+D-v$ contains a $4$-path $R'$. Thus $a_{1}v\notin E$:
else $ve_{1}v,R'$  beats $D,R$ by (P1). Also $\left\Vert a_{1},D-v\right\Vert \leq1$:
else $a_{1}(D-v)a_{1},va'_{1}a'_{2}a_{2}$ beats $D,R$
by (P1). Then $\left\Vert a_{1},D\right\Vert \leq1$.

Suppose $\left\Vert a_{1},C\right\Vert \geq2$.
  Pick $c\in C$ with $C-c\subseteq N(a_{1})$. Then
  \begin{equation}\label{star_p_12}a_{2}c\notin E:\end{equation}
else $a_{1}(C-c)a_{1}$, $a'_{1}a'_{2}a_{2}c$ beats $C,R$
by (P1). Then $\left\Vert a_{2},C\right\Vert =2$ and $\left\Vert a_{2},D\right\Vert =3$.
Also $a_{1}c\notin E$: else picking a different $c$ violates \eqref{star_p_12}.
As $a'_1c\notin E$, $\left\Vert c,D\right\Vert =3$ and $a_2'c \in E(G)$. Then $a_1(C-c)a_1$, $a_{2}(D-v)a_{2}$ and $cva'_{1}a'_{2}c$ are disjoint cycles.
 Otherwise, $\left\Vert a_{1},C\right\Vert \leq1$
and $d(a_{1})\leq3$.  Then $d(a_1)=3$ since $d(a_1) \geq d(a_1')$.

Now $d(a'_{2})=6$. Say $D=vbb'v$ and
$a_{1}b\in E$. As $b'a'_1\notin E$, $d(b')\ge9-3=6$.
Since  $\|e_2,V\|=12$, we see that $a_2$ and $a'_2$ have three common neighbors. If one is $b'$ then $D':=a_1a'_1vba_1$, $b'e_2b'$, and $C$ are disjoint cycles; else $\|b',C\|=3$ and there is $c'\in C$ with $\|c',e_2\|=2$. Then $D'$, $c'e_2c'$ and $b'(C-c')b'$ are disjoint cycles.
 Thus, $d(a'_{1})=4$. 

Since $a_1$ is low and $d(a_1) \geq d(a_1')$, we see $d(a_1)=d(a_1')= 4$ and $\|\{a_1,a_1'\},C \cup D\| = 5$, so we may assume $\left\Vert e_{1},C\right\Vert \geq3$.
If  $\left\Vert e_{2},C\right\Vert \geq3$, then because $a_1'$ is low,
Claim~\ref{RC3cl}(a) holds. Now, $V(C)\subseteq N(e_1)$ and there is $x\in e_{1}=xy$ with
$\left\Vert x,C\right\Vert \geq2$. First suppose $\left\Vert x,C\right\Vert =3$.
As $x$ is low, $x=a_{1}$. Pick $c\in N(a_{2})\cap C$, which exists because $\|a_2,C \cup D\|\geq 4$. Then $a_{1}(C-c)a_{1}$,
$a'_{1}a'_{2}a_{2}c$ beats $C,R$ by (P1). Now suppose
$\left\Vert x,C\right\Vert =2$. Let $c\in C\smallsetminus N(x)$.
Then $x(C-c)x$, $yce_{2}$ beats $C,R$ by (P1).

CASE 2.3: $|C|=|D|=3$ and $\left\Vert R,V\right\Vert =19$.
Say  $\left\Vert C,R\right\Vert =7$
and $\left\Vert D,R\right\Vert =6$. 

CASE 2.3.1: $a'_{1}$ is low.  
Then $\|a_1',C \cup D\|\leq4-\|a_1',R\|=2$, so by Claim~\ref{7edges}, $\|e_2,C\|=5$ with $\|a_2,C\|=2$.
 Then $5\leq d(a_{2})\leq6$.

If $d(a_{2})=5$ then $d(a_{1})=d(a'_{1})=4$ and $d(a'_{2})=6$.
Then $\left\Vert a_{2},D\right\Vert =2$ and $\left\Vert a'_{2},D\right\Vert =1$.
Say $D=b_{1}b_{2}b_{3}b_{1}$, where $a_{2}b_{2},a_{2}b_{3}\in E$.
As $a'_{1}$ is low, (a) of Claim~\ref{RC3cl} holds. Then $\left\Vert b_{1},a_{1}a'_{1}a'_{2}\right\Vert =2$,
and there is a cycle $D'\subseteq G[b_{1}a_{1}a'_{1}a'_{2}]$.
Then $a_{2}(D-b_{1})a_2$ and $D'$ are disjoint.

If $d(a_{2})=6$ then $\left\Vert a_{2},D\right\Vert =3$. Let $c_1 \in C-N(a_2).$ By Claim~\ref{7edges}, $\left\Vert c_{1},R\right\Vert =1$, so
$c_{1}$ is high, and $\left\Vert c_{1},D\right\Vert \geq2$.
If $\left\Vert a_{2}',D\right\Vert \geq1$, then (a) and (d) hold in Claim~\ref{PC} for $v_1=a_2$ and $v_i = a_2'$, so $G[D\cup c_{1}a'_{2}a_{2}]$
has two disjoint cycles, and $c_{2}e_{1}c_{3}c_{2}$ contains a third.
Therefore, assume $\|a_2',D\|=0$, and so $d(a'_{2})=5$. 
Thus $d(a_{1})=d(a'_{1})=4$.
 Again, $\left\Vert e_{1},D\right\Vert =3=\left\Vert a_{2},D\right\Vert $.
Now there are $x\in e_{1}$ and $b\in V(D)$ with $D-b\subseteq N(x)$.
As $a'_{1}$ is low and has two neighbors in $R$, if $\left\Vert x,D\right\Vert =3$
then $x=a_{1}$. Anyway, using Claim~\ref{RC3cl}, $G[R+b-x]$ contains a $4$-path $R'$, and
$x(D-b)x$, $R'$ beats $D,R$ by (P1). 

CASE 2.3.2: $a'_{1}$ is high. Since $19=\left\Vert R,V\right\Vert\geq d(a_1)+d(a_1')+2(9-d(a_1))\geq 23-d(a_1)$,
we get
 $d(a_{1})=4$ and $d(a'_{1})=d(a'_{2})=d(a_{2})=5$.
  Choose
notation so that $C=c_{1}c_{2}c_{3}c_{1}$, $D=b_{1}b_{2}b_{3}b_{1}$,
and $\left\Vert c_{1},R\right\Vert =1$. By Claim~\ref{7edges},  there
is $i\in[2]$ with $\left\Vert a_{i},C\right\Vert =2$, $\left\Vert a'_{i},C\right\Vert =3$,
and $a_{i}c_{1}\notin E$. If $i=1$ then every low vertex is in $N(a_{1})-a_{1}'\subseteq D\cup C'$,
where $C'=a_{1}c_{2}c_{3}a_{1}$. Then $C'$, $c_{1}a'_{1}a'_{2}a_{2}$
 beats $C,R$ by (P1). Thus let $i=2$. 
Now $\left\Vert a_{2},C\right\Vert =2=\left\Vert a_{2},D\right\Vert $.

Say $a_{2}b_{2},a_{2}b_{3}\in E$. Also $\left\Vert a'_{2},D\right\Vert =0$
and $\left\Vert e_{1},D\right\Vert =4$. Then $\left\Vert b_{j},e_{1}\right\Vert =2$
for some $j\in[3]$. If $j=1$ then $b_{1}e_{1}b_{1}$ and $a_{2}b_{2}b_{3}a_{2}$
 are disjoint cycles. Else, say $j=2$. 
 By inspection, 
all low vertices are contained in $\{a_{1},b_{1},b_{3}\}$. If $b_{1}$
and $b_{3}$ are high then $b_{2}e_{1}b_{2}$, $b_{1}b_{3}e_{2}$
 beats $D,R$ by (P1). Else there is a $3$-cycle $D'\subseteq G[D+a_{1}]$
that contains every low vertex of $G$. Pick $D'$ with $b_{1}\in D'$ if
possible. If $b_{2}\notin D'$ then $D'$ and $b_{2}a'_{1}a'_{2}a_{2}b_{2}$
 are disjoint cycles. If $b_{3}\notin D'$ then $D'$, $b_{3}a_{2}a'_{2}a'_{1}$
 beats $D,R$ by (P1). Else $b_{1}\notin D'$, $a_{1}b_{1}\notin E$,
and $b_{1}$ is high. If $b_{1}a'_{1}\in E$ then $D'$, $b_{1}a'_{1}a'_{2}a_{2}$
 beats $D,R$ by (P1). Else, $\left\Vert b_{1},C\right\Vert =3$.
Then $D'$, $b_{1}c_{1}c_{2}b_{1}$, and $c_{3}e_{2}c_{3}$ are 
disjoint cycles.\end{proof}

\subsection{Key Lemma\label{sub:Key-L}}

Now $|R|=3$; say $R=a_{1}a'a_{2}$. By Lemma \ref{R=00003D3} the maximum length of a cycle in $\mathcal C$ is $4$.
Fix $C=w_{1}\dots w_{4}w_{1}\in\mathcal{C}$.

\begin{lem}
\label{L0} If $D\in\mathcal{C}$ with $\left\Vert R,D\right\Vert \geq7$
then $|D|=3$, $\left\Vert R,D\right\Vert =7$ and $G[R\cup D]=K_{6}-E(K_{3})$. \end{lem}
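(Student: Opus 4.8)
The plan is to exploit optimality \ref{o1} in its sharpest form: since $|\mathcal C|=k-1$ and $G$ has no $k$ disjoint cycles, $G[R\cup D]$ can contain no two disjoint cycles, for otherwise replacing $D$ by those two cycles would produce $k$ disjoint cycles in $G$. As $|V(R)\cup V(D)|=6$, this means $G[R\cup D]$ has no two disjoint triangles, and this single constraint drives the whole argument. First I would pin down $|D|$. The maximum cycle length is $4$, so $|D|\in\{3,4\}$; if $|D|=4$ then Claim~\ref{clm:1}(b) gives $\|w,D\|\le2$ for each $w\in V(R)$, whence $\|R,D\|\le6<7$, a contradiction. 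Thus $|D|=3$, say $D=d_1d_2d_3d_1$. Recall $R=a_1a'a_2$ is an induced path, so $a_1a',a'a_2\in E$ and $a_1a_2\notin E$ (as $R$ is acyclic).

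Next I would translate ``no two disjoint triangles'' into adjacency conditions. Write $A_1=N(a_1)\cap D$ and $A_2=N(a_2)\cap D$, and note each of $\|a_1,D\|,\|a',D\|,\|a_2,D\|\le3$ by Claim~\ref{clm:1}(a). The only ways to partition $V(R)\cup V(D)$ into two triangles use a part $\{a',a_s,d_i\}$ together with $\{a_{3-s},d_j,d_k\}$ (where $s\in\{1,2\}$ and $\{d_j,d_k\}=V(D)\setminus\{d_i\}$): a triangle containing two of $a_1,a',a_2$ needs those two adjacent, and the only adjacent pairs are $a_1a'$ and $a'a_2$, both containing $a'$. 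Hence for each $i$ the configuration ``$a'd_i,a_1d_i\in E$ and $\{d_j,d_k\}\subseteq A_2$'' is forbidden, and symmetrically with $a_1,a_2$ swapped. I would then show $\|a',D\|=3$: if not, then $\|a_1,D\|+\|a_2,D\|\ge5$, so WLOG $a_1$ is adjacent to all of $D$; now for every $d_i$ the forbidden split $\{a',a_2,d_i\}$, $\{a_1,d_j,d_k\}$ forces $N(a')\cap D$ and $A_2$ to be disjoint, giving $\|a',D\|+\|a_2,D\|\le3$ and $\|R,D\|\le6$, a contradiction.

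With $\|a',D\|=3$, $a'$ is adjacent to all of $D$, so $\{a_1,a',d_i\}$ is a triangle exactly when $d_i\in A_1$, and $\{a_2,d_j,d_k\}$ exactly when $\{d_j,d_k\}\subseteq A_2$. The forbidden configurations then read $d_i\in A_1\Rightarrow|A_2\cap(V(D)\setminus\{d_i\})|\le1$ and symmetrically. From these I would deduce $|A_1|,|A_2|\le2$ (if, say, $A_2=V(D)$, then any $d_i\in A_1$ violates the condition), and since $\|a_1,D\|+\|a_2,D\|\ge4$ this forces $|A_1|=|A_2|=2$. Finally, if $A_1\neq A_2$ then $|A_1\cap A_2|=1$, and choosing $d_i\in A_1\setminus A_2$ yields $A_2=V(D)\setminus\{d_i\}$, exhibiting the disjoint triangles $\{a_1,a',d_i\}$ and $\{a_2,d_j,d_k\}$; so $A_1=A_2$, a common $2$-set missing one vertex $x\in V(D)$. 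Then $\|R,D\|=2+3+2=7$, and the three non-edges of $G[R\cup D]$ are precisely $a_1a_2$, $a_1x$, $a_2x$, which form a triangle; hence $G[R\cup D]=K_6-E(K_3)$. I expect the main obstacle to be organizing the case analysis so that the two symmetric forbidden configurations are applied cleanly, and in particular establishing $\|a',D\|=3$ before the counting, since that is what collapses the a-priori possibilities into the single extremal structure.
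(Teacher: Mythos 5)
Your proof is correct and follows essentially the same route as the paper's: both reduce to $|D|=3$ via Claim~\ref{clm:1}, then exploit that $G[R\cup D]$ has exactly six vertices and, by \ref{o1}, no two disjoint triangles, which forces the degree pattern $\left\Vert a_1,D\right\Vert=\left\Vert a_2,D\right\Vert=2$, $\left\Vert a',D\right\Vert=3$ and finally $N(a_1)\cap D=N(a_2)\cap D$. The only cosmetic difference is that where the paper invokes Claim~\ref{PC} to rule out an endpoint of $R$ having three neighbors in $D$, you re-derive that special case directly by enumerating the possible partitions of the six vertices into two triangles.
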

\begin{proof}
Since $\left\Vert R,D\right\Vert \geq7$, there exists $a\in R$ with
$\left\Vert a,D\right\Vert \geq3$.  By Claim~\ref{clm:1}, $|D|=3$.
If $\|a_i,D\|=3$ for any $i \in [2]$, then (a) and (c) in
Claim~\ref{PC} hold, violating \ref{o1}.
Then $\left\Vert a_{1},D\right\Vert =\left\Vert a_{2},D\right\Vert =2$ and
$\left\Vert a',D\right\Vert =3$. If $G[R\cup D]\ne K_{6}-K_{3}$
then $N(a_{1})\cap D\ne N(a_{2})\cap D$.
Then there is $w\in N(a_{1})\cap D$ with $\left\Vert a_{2},D-w\right\Vert =2$.
Then $wa_{1}a'w$ and $a_{2}(D-w)a_{2}$ are disjoint cycles.
\end{proof}

\begin{lem}
\label{L1} Let $D\in\mathcal{C}$ with $D=z_{1}\dots z_{t}z_{1}$.
If $\left\Vert C,D\right\Vert \geq8$ then $\left\Vert C,D\right\Vert =8$
and 
\[
W:=G[C\cup D]\in\{K_{4,4},~~K_{1}\vee K_{3,3},~~\overline{K}_{3}\vee(K_{1}+K_{3})\}.
\]
\end{lem}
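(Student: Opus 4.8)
The plan is to argue entirely inside $W:=G[C\cup D]$, using \ref{o2} as the engine. Since $C$ and $D$ are two of the $k-1$ cycles of the optimal $\mathcal C$, we may never repartition $V(C)\cup V(D)$ into two disjoint cycles of smaller total length (pushing any unused vertices into $R$) without beating $\mathcal C$ by \ref{o2}, nor may we find three disjoint cycles there. Concretely this says $W$ contains no two disjoint triangles, and when $|D|=4$ also no disjoint triangle and $4$-cycle (total length $7<8$). By Lemma~\ref{R=00003D3} the longest cycle in $\mathcal C$ has length $4$, so $|C|=4$ and $|D|\in\{3,4\}$, and every relevant ``improvement'' will be a decomposition of $V(C)\cup V(D)$ into two short disjoint cycles.

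First I would establish the bound $\|C,D\|\le 8$. Suppose $\|C,D\|\ge 9$; I will produce two disjoint triangles in $W$, which have total length $6$ (less than $|C|+|D|\in\{7,8\}$) and hence beat $\mathcal C$ by \ref{o2}. The available triangles are of two types: $w_aw_{a+1}z_i$, a consecutive pair of $C$ with a common neighbour $z_i\in D$, and $w_az_iz_j$, a vertex of $C$ adjacent to two vertices of $D$ that are consecutive on $D$ (when $|D|=3$, any two vertices of $D$ are consecutive). With at least $9$ cross-edges spread over the $\le 4$ vertices of $D$, some $z_i$ has $\|z_i,C\|\ge 3$, and a short averaging argument over the cross-degree pattern $(\|z_i,C\|)_i$ yields two vertex-disjoint such triangles. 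This step is routine but needs the few extremal degree patterns checked by hand.

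Second, assuming $\|C,D\|=8$, I would pin down $W$ by cases on $|D|$. For $|D|=4$: a chord of $C$ creates a triangle disjoint from the $4$-cycle $D$, and a chord of $D$ a triangle disjoint from the $4$-cycle $C$; each is a forbidden triangle-plus-$4$-cycle, so $C$ and $D$ are chordless and $W$ has exactly $4+4+8=16$ edges. I would then show $W$ is triangle-free: any triangle must consist of a consecutive pair of one cycle together with a vertex of the other, and for any such triangle I would locate a disjoint cycle of length $\le 4$ among the remaining five vertices (the density forced by $16$ edges makes this possible), again beating $\mathcal C$. A triangle-free graph on $8$ vertices with $16$ edges is $K_{4,4}$ by Mantel's theorem. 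For $|D|=3$: the same chord argument forbids chords of $C$, so $W$ has $15$ edges and no triangle lies inside $C$; I would then analyse the cross-degree sequence $(\|z_1,C\|,\|z_2,C\|,\|z_3,C\|)$ summing to $8$, splitting off the case in which some vertex is adjacent to all six others—forcing $K_1\vee K_{3,3}$—from the complementary case, which I expect to force $\overline K_3\vee(K_1+K_3)$.

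The main obstacle is the structural step, and especially the subcase $|D|=3$. Here $|C|+|D|=7=3+4$, so every decomposition of $V(C)\cup V(D)$ into a triangle and a $4$-cycle has total length exactly $|C|+|D|$ and \ref{o2} gives nothing; the only usable improvement is a pair of disjoint triangles of total length $6$, and now such a pair may split the three vertices of $D$ between its two triangles (one triangle $w_aw_{a+1}z_i$, the other $w_bz_jz_k$), which is a genuinely new configuration to exclude. Showing that precisely $K_1\vee K_{3,3}$ and $\overline K_3\vee(K_1+K_3)$ survive amounts to an essentially exhaustive verification over the admissible cross-degree sequences and the placement of the at most four non-edges between $C$ and $D$; making this enumeration clean, rather than a long list of ad hoc subcases, is where the real care is needed.
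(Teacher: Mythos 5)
Your overall framework is the paper's own: exchange arguments inside $W$ driven by~\ref{o2}, chordless $C$ and $D$, and a split by $|D|$. Indeed your $|D|=3$ plan, though you leave it unexecuted, does go through cleanly (the admissible cross-degree sequences are only $(4,4,0),(4,3,1),(4,2,2),(3,3,2)$, and the first three collapse quickly), and finishing $|D|=4$ with Mantel's theorem is a nice touch. The genuine gap is that both of the $|D|=4$ steps you call ``routine'' rest on claims that are false as stated. For the bound step you commit to producing two disjoint \emph{triangles} whenever $\|C,D\|\ge 9$. Take $z_1\sim w_1,w_2,w_3$ and $z_2,z_4\sim w_1,w_3$, $z_3\sim w_2,w_4$ (nine cross-edges, both $4$-cycles chordless). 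Every triangle of this $W$ contains $z_1$: no other $z_i$ has two consecutive neighbours on $C$, and the edges $z_2z_3$, $z_3z_4$ have no common neighbour on $C$. So there are no two disjoint triangles; one is forced to use a triangle plus a $4$-cycle instead (e.g.\ $z_1w_2w_3$ together with $z_2z_3z_4w_1z_2$), which your bound step does not provide for even though your framework permits it.

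For the structure step you claim that, when $\|C,D\|=8$, \emph{any} triangle of $W$ admits a disjoint cycle of length at most $4$ on the remaining five vertices, ``forced by the density of $16$ edges.'' Take $z_1\sim w_1,w_2,w_3,w_4$ and $z_2,z_4\sim w_1,w_2$, with $z_3$ having no neighbour on $C$ (eight cross-edges, both cycles chordless). The triangle $z_1w_1w_2$ leaves $\{z_2,z_3,z_4,w_3,w_4\}$, which induces only the path $z_2z_3z_4$ and the edge $w_3w_4$ --- a forest. (This $W$ is still excludable, but only via a \emph{different} pair, e.g.\ the disjoint triangles $z_1w_3w_4$ and $z_2w_1w_2$.) In both steps, then, the universal claim must be replaced by an existential one --- \emph{some} triangle has a disjoint partner of total length under $8$ --- and proving that existential statement is exactly the nontrivial content of the lemma; the paper supplies it through its case analysis on $\Delta(W)$ (Cases 1--3 for $|D|=4$), where high-degree vertices are handled first. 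Note also that the paper's natural replacements can produce a $5$-cycle as the second cycle, which merely \emph{ties} under~\ref{o2}; it kills these by observing that the new collection would then itself be optimal and would contradict Lemma~\ref{R=00003D3}. Your proposal, restricted to partners of length at most $4$, never confronts this, so any repair must either prove that a short partner always exists or import that trick.
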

\begin{proof}
 First suppose $|D|=4$. Suppose
 
\begin{equation}\label{star_p_14}\mbox{$W$ contains two disjoint
cycles $T$ and $C'$ with $|T|=3$.}
\end{equation}
Then $\mathcal{C}':=\mathcal{C}-C-D+T+C'$
is an optimal choice of $k-1$ disjoint cycles, since $\mathcal{C}$ is optimal. By Lemma~\ref{R=00003D3}, $|C'|\leq4$.
Thus $\mathcal{C}'$ beats $\mathcal{C}$ by \ref{o2}. 

\noindent CASE 1: $\Delta(W)=6$. By symmetry, assume $d_W(w_{4})=6$. Then $\left\Vert \{z_{i},z_{i+1}\},C-w_{4}\right\Vert \geq2$
for some $i\in\{1,3\}$. Then \eqref{star_p_14} holds with $T=w_{4}z_{4-i}z_{5-i}w_{4}$. 

\noindent CASE 2: $\Delta(W)=5.$ Say $z_{1},z_{2},z_{3}\in N(w_{1})$.
Then $\left\Vert \{z_{i},z_{4}\},C-w_{1}\right\Vert \geq2$ for some
$i\in\{1,3\}$. Then \eqref{star_p_14} holds with $T=w_{1}z_{4-i}z_{2}w_{1}$. 

\noindent CASE 3: $\Delta(W)=4$. Then $W$ is regular. If $W$ has
a triangle then \eqref{star_p_14} holds. Else, say $w_{1}z_{1},w_1z_3\in E$. 
Then $z_1,z_3 \not\in N(w_2) \cup N(w_4)$, so $z_2,z_4 \in N(w_2) \cup N(w_4)$, and $z_1,z_3 \in N(w_3)$.

Now, suppose $|D|=3$. 

\noindent CASE 1: $d_W(z_{h})=6$ for some $h\in[3]$. Say $h=3$. If
$w_{i},w_{i+1}\in N(z_{j})$ for some $i\in[4]$ and $j\in[2]$, then
$z_{3}w_{i+2}w_{i+3}z_{3}$, $z_{j}w_{i}w_{i+1}z_{j}$ beats $C,D$ by \ref{o2}. Else for all $j\in[2]$, $\|z_{j},C\|=2$, and
the neighbors of $z_j$ in $C$ are nonadjacent. If $w_i \in N(z_1)\cap N(z_2) \cap C$, then 
$z_3w_{i+1}w_{i+2}z_3$, $z_1z_2w_iz_1$ are preferable to $C,D$ by \ref{o2}.
Wence $W=K_{1}\vee K_{3,3}.$

\noindent CASE 2:  $d_W(z_{h})\le 5$ for every $h \in [3]$. Say $d(z_{1})=5=d(z_{2})$, $d(z_{3})=4$,
and $w_{1},w_{2},w_{3}\in N(z_{1})$. If $N(z_{1})\cap C\ne N(z_{2})\cap C$
then $W-z_{3}$ contains two disjoint cycles, preferable to $C,D$ by \ref{o2}; if $w_{i}\in N(z_{3})$
for some $i\in\{1,3\}$ then $W-w_{4}$ contains two disjoint cycles.
Then $N(z_{3})=\{w_{2},w_{4}\}$, and so $W=\overline{K}_{3}\vee(K_{1}+K_{3})$, where
$V(K_1)=\{w_4\}$, $w_2z_1z_2w_2=K_3$, and $V(K_3)=\{w_1,w_3,z_3\}$.
\end{proof}

\begin{claim}
\label{6edges}For $D \in \mathcal C$, if $\left\Vert \{w_{1},w_{3}\},D\right\Vert \geq5$
then $\left\Vert C,D\right\Vert \leq6$. If also $|D|=3$ then $\left\Vert \{w_{2},w_{4}\},D\right\Vert =0$.\end{claim}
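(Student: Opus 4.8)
The plan is to argue by contradiction through the optimality criterion \ref{o2}: I assume the conclusion fails and exhibit two vertex-disjoint cycles $T_1,T_2\subseteq G[C\cup D]$ with $|T_1|+|T_2|<|C|+|D|$; replacing $C,D$ in $\mathcal C$ by $T_1,T_2$ then beats $\mathcal C$ by \ref{o2}. Two preliminary reductions feed this. First, by \ref{o2} the $4$-cycle $C$ is induced (a chord would let me shorten $C$, lowering $\sum|C|$), so $w_1w_3,w_2w_4\notin E$; likewise $D$ is induced when $|D|=4$. Second, writing $\left\Vert\{w_1,w_3\},D\right\Vert=s+2c$, where $c$ is the number of common neighbors of $w_1,w_3$ in $D$ and $s$ the number of private ones, the inequality $s+c\le|D|$ together with $\left\Vert\{w_1,w_3\},D\right\Vert\ge5$ gives $c\ge2$ when $|D|=3$ and $c\ge1$ when $|D|=4$.

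For $|D|=3$, say $D=z_1z_2z_3z_1$, I would prove the stronger statement $\left\Vert\{w_2,w_4\},D\right\Vert=0$, which immediately yields $\left\Vert C,D\right\Vert=\left\Vert\{w_1,w_3\},D\right\Vert\le6$. Suppose not; after relabeling I may assume $w_2z_1\in E$. Since $\left\Vert\{w_1,w_3\},D\right\Vert\ge5$, at most one of the six edges between $\{w_1,w_3\}$ and $D$ is absent. I then look for an assignment $\{i,j\}=\{1,3\}$ with $z_1\in N(w_i)$ and $z_2,z_3\in N(w_j)$; given this, $T_1:=w_2z_1w_i$ (using the $C$-edge $w_2w_i$) and $T_2:=w_jz_2z_3$ are disjoint triangles, contradicting \ref{o2}. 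A short check, splitting on which single edge (if any) is missing, shows that some assignment always succeeds.

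For $|D|=4$ I only need $\left\Vert C,D\right\Vert\le6$. If $\left\Vert C,D\right\Vert\ge8$, then Lemma~\ref{L1} forces $G[C\cup D]=K_{4,4}$ (the sole option with $|D|=4$); but there the diagonal pair $\{w_1,w_3\}$ of $C$ lies in one part and sees only the two $D$-vertices of the opposite part, so $\left\Vert\{w_1,w_3\},D\right\Vert=4<5$, a contradiction. It remains to rule out $\left\Vert C,D\right\Vert=7$. If $\left\Vert\{w_1,w_3\},D\right\Vert=7$ then $c\ge3$, and I can choose an edge $z_az_{a+1}$ of $D$ with $z_a,z_{a+1}\in N(w_1)$ and the complementary edge $z_bz_{b+1}$ with $z_b,z_{b+1}\in N(w_3)$, giving two disjoint triangles at once. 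Otherwise $\left\Vert\{w_1,w_3\},D\right\Vert\in\{5,6\}$, whence $\left\Vert\{w_2,w_4\},D\right\Vert\ge1$; fixing such an edge and a vertex of $\{w_1,w_3\}$ with at least three neighbors in $D$, I form a triangle $w_1z_az_{a+1}$ and seek a cycle of length $\le4$ on the remaining five vertices $\{w_2,w_3,w_4\}\cup\{z_c,z_d\}$ (the leftover $D$-vertices, which are adjacent in $D$), closing it through $N(w_3)\cap D$ or through a single edge from $\{w_2,w_4\}$.

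The hard part will be precisely this last step. The diagonal non-edges $w_2w_4$ and $z_1z_3,z_2z_4$ obstruct the naive closure: if the surviving cross-edges reach $w_2$ and $w_4$ only on opposite sides of the leftover $D$-edge, the shortest available cycle has length $5$ and $|T_1|+|T_2|=8$ fails to beat $C,D$. The resolution I expect to use is that the chosen vertex of large degree into $D$ admits at least two candidate triangles $T_1$ (leaving either of the two $D$-edges), and a finite case analysis on the placement of the one or two edges from $\{w_2,w_4\}$ relative to $N(w_3)\cap D$ shows that at least one choice produces a triangle plus a $4$-cycle, or two triangles. Carrying out this bounded check, rather than finding a single uniform construction, is the genuine labor of the claim.
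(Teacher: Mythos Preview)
Your treatment of $|D|=3$ and of $\|C,D\|\ge8$ via Lemma~\ref{L1} is fine, and the reduction to $\|C,D\|=7$ with $\|\{w_1,w_3\},D\|\in\{5,6\}$ is correct. The gap is in that last subcase: committing to a triangle of the form $T_1=w_1z_az_{a+1}w_1$ and then searching for a short cycle in $\{w_2,w_3,w_4,z_c,z_d\}$ does not always succeed. Take $N(w_1)\cap D=\{z_1,z_2,z_3\}$, $N(w_3)\cap D=\{z_1,z_3\}$, and $N(w_2)\cap D=N(w_4)\cap D=\{z_2\}$; here $\|\{w_1,w_3\},D\|=5$ and $\|C,D\|=7$. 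For $T_1=w_1z_1z_2w_1$ the residual on $\{w_2,w_3,w_4,z_3,z_4\}$ has edge set $\{w_2w_3,w_3w_4,z_3z_4,w_3z_3\}$, a tree; for $T_1=w_1z_2z_3w_1$ the residual is the mirror tree on $\{w_2,w_3,w_4,z_1,z_4\}$. Neither of your two candidate triangles leaves a cycle, so the promised case analysis stalls.

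The paper avoids this by not insisting that $T_1$ use a $D$-edge. After recording (as you implicitly do) that both $w_1$-triangles force $\|\{w_i,w_3\},\{z_3,z_4\}\|\le1$ and $\|\{w_i,w_3\},\{z_1,z_4\}\|\le1$ for the chosen $i\in\{2,4\}$, it pivots to a triangle built on a $C$-edge: either $w_iw_3z_2$ paired with the $4$-cycle $w_1z_1z_4z_3w_1$, or $w_iw_1z_m$ (where $z_m$ is the unique $D$-neighbour of $w_i$) paired with a short cycle through $w_3$ and $D-z_m$. In the configuration above this yields $w_1w_2z_2w_1$ and $w_3z_1z_4z_3w_3$. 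The missing idea is precisely to let $T_1$ borrow an edge of $C$; once you allow that, the bounded check goes through.
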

\begin{proof}
Assume not.  Let $D=z_1\dots z_tz_1$. Then $\left\Vert \{w_{1},w_{3}\},D\right\Vert \geq5$ and  $\left\Vert C,D\right\Vert \geq7$.
Say $\left\Vert w_{1},D\right\Vert \geq\left\Vert w_{3},D\right\Vert $,
$\{z_{1},z_{2},z_{3}\}\subseteq N(w_{1})$, and $z_{l}\in N(w_{3})$. 

Suppose $\left\Vert w_{1},D\right\Vert =4$. Then $|D|=4$.
 If $\left\Vert z_{h},C\right\Vert \geq3$
for some $h\in[4]$ then 
there is a cycle $B\subseteq G[w_{2},w_{3},w_{4},z_{h}]$; so 
$B$, $w_1z_{h+1}z_{h+2}w_1$ beats $C,D$ by \ref{o2}.
Else there are  $j\in\{l-1,l+1\}$ and $i\in \{2,3,4\}$ with $z_iw_j\in E$. 
Then 
$z_lz_j[w_iw_3]z_l$, $w_1(D-z_l-z_j)w_1$ beats $C,D$  by \ref{o2}, where $[w_iw_3]=w_3$ if $i=3$.


Else, $\left\Vert w_{1},D\right\Vert =3$. By assumption, there is
$i\in\{2,4\}$ with $\left\Vert w_{i},D\right\Vert \geq1$. 
If $|D|=3$, applying Claim~\ref{PC} 
 with $P:=w_{1}w_{i}w_{3}$ and cycle $D$ yields two disjoint cycles in $(D \cup C)-w_{6-i}$, contradicting \ref{o2}. Therefore, suppose $|D|=4$. Because $w_1z_1z_2w_1$ and $w_1z_2z_3w_1$ are triangles, there do not exist cycles in $G[\{w_i,w_3,z_3,z_4\}]$ or $G[\{w_i,w_3,z_1,z_4\}]$ by~\ref{o2}. Then $\|\{w_i,w_3\},\{z_3,z_4\}\|$, $\|\{w_i,w_3\},\{z_1,z_4\}\| \leq 1$. Since $\|\{w_i,w_3\},D\| \geq 3$, one has a neighbor in $z_2$. If both are adjacent to $z_2$, then $w_iw_3z_2w_i$, $w_1z_1z_4z_3w_1$ beat $C,D$ by \ref{o2}. Then $\|\{w_i,w_3\},z_2\|=1=\|\{w_i,w_3\},z_1\|=\|\{w_i,w_3\},z_3\|$. Let $z_m$ be the neighbor of $w_i$. Then $w_iw_1z_mw_i$, $w_3(D-z_m)w_3$ beat $C,D$ by \ref{o2}.
 
 Suppose $|D|=3$ and $\|\{w_1,w_3\},D\| \geq 5$. If $\|\{w_2,w_4\},D\| \geq 1$, then $C \cup D$ contains two triangles, and these are preferable to $C,D$ by \ref{o2}.
\end{proof}

For $v\in N(C)$, set $\mbox{type}(v)=i\in[2]$ if $N(v)\cap C\subseteq\{w_{i},w_{i+2}\}$. Call
$v$  \emph{light }if $\left\Vert v,C\right\Vert =1$; else  $v$ is
 \emph{heavy.} For $D=z_{1}\dots z_{t}z_{1}\in\mathcal{C}$, put  $H:=H(D):=G[R\cup D]$.

\begin{claim}
\label{types} If $\left\Vert \{a_{1},a_{2}\},D\right\Vert \geq5$
then there exists $i\in[2]$ such that
\begin{enumerate}[label=(\alph{enumi}), ref=\alph{enumi}]
\item \label{4edges} $\|C,H\| \leq 12$ and $\left\Vert \{w_{i},w_{i+2}\},H\right\Vert \leq4$;
\item \label{12edges}$\left\Vert C,H\right\Vert =12$;
\item \label{pic}$N(w_{i})\cap H=N(w_{i+2})\cap H=\{a_{1},a_{2}\}$ and
$N(w_{3-i})\cap H=N(w_{5-i})\cap H=V(D)\cup\{a'\}$.\end{enumerate}
\begin{proof}
By Claim~\ref{clm:1}, $|D|=3$. Choose notation so that $\left\Vert a_{1},D\right\Vert =3$
and $z_{2},z_{3}\in N(a_{2})$. 

(\ref{4edges}) Using that $\{w_{1},w_{3}\}$ and $\{w_{2},w_{4}\}$
are independent and Lemma \ref{L1}:
\begin{equation}
\left\Vert C,H\right\Vert =\left\Vert C,V-(V-H)\right\Vert \geq2(4k-3)-8(k-2)=10.\label{eq:CH}
\end{equation}
Let $v\in V(H)$. As $K_4 \subseteq H$, $H-v$ contains a $3$-cycle. If $C+v$ contains
another $3$-cycle then these $3$-cycles beat $C,D$ by
\ref{o2}.
Thus, $\mbox{type}(v)$ is defined for all $v\in N(C)\cap H$,
 and $\left\Vert C,H\right\Vert \leq12$.
If only five vertices of $H$ have neighbors in $C$ then there is
$i\in[2]$ such that at most two vertices in $H$ have type $i$. Then $\left\Vert \{w_{i},w_{i+2}\},H\right\Vert \leq4$.
Else every vertex in $H$ has a neighbor in $C$.\emph{ }By (\ref{eq:CH}),
$H$ has at least four heavy vertices.

Let $H'$ be the spanning subgraph of $H$ with $xy\in E(H')$ iff $xy \in E(H)$ and
$H-\{x,y\}$ contains a $3$-cycle. If $xy\in E(H')$ then $N(x)\cap N(y)\cap C=\emptyset$ by \ref{o2}.
Now, if $x$ and $y$ have the same type, then they are both light. By inspection,
$H'\supseteq z_{1}a_{1}a'a_{2}z_{2}+a_{2}z_{3}$. 

Let $\mbox{type}(a_{2})=i$. If $a_{2}$ is heavy then its neighbors
$a',z_{2},z_{3}$ have type $3-i$. Either $z_{1},a_{1}$ are both
light or they have different types. Anyway, $\left\Vert \{w_{i},w_{i+2}\},H\right\Vert \leq4$.
Else $a_{2}$ is light. Then because there are at least four heavy vertices in $H$, 
at least one of $z_{1},a_{1}$ is heavy 
and so they have different types. Also any type-$i$ vertex in $a',z_{2},z_{3}$
is light, but at most one vertex of $a,z_2,z_3$ is light because there are at most two light vertices in $H$. Then $\left\Vert \{w_{i},w_{i+2}\},H\right\Vert \leq4$.

(\ref{12edges}) By (\ref{4edges}), there is $i$ with $\left\Vert \{w_{i},w_{i+2}\},H\right\Vert \leq4$;
thus 
\[
\left\Vert \{w_{i},w_{i+2}\},V-H\right\Vert \geq(4k-3)-4=4(k-2)+1.
\]
Now $\left\Vert \{w_{i},w_{i+2}\},D'\right\Vert \geq5$ for some $D'\in\mathcal{C}-C-D$.
By (\ref{4edges}), Claim~\ref{6edges}, and Lemma~\ref{L1},
\[
12\geq\left\Vert C,H\right\Vert =\left\Vert C,V-D'-(V-H-D')\right\Vert \geq2(4k-3)-6-8(k-3)=12.
\]

(\ref{pic}) By (\ref{12edges}), $\left\Vert C,H\right\Vert =12$,
so each vertex in $H$ is heavy. Thus $\mbox{type}(v)$ is the unique
proper $2$-coloring of $H'$, and (\ref{pic}) follows.
\end{proof}
\end{claim}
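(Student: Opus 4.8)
The plan is to sandwich $\|C,H\|$ between $10$ and $12$ and then recover the exact adjacencies from an auxiliary graph on $V(H)$ that records which pairs of vertices can be removed without killing every triangle of $H$.

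First I would normalise the configuration. Since $\|\{a_1,a_2\},D\|\ge5$, some end of $R$---say $a_1$---has $\|a_1,D\|\ge3$, so Claim~\ref{clm:1} forces $|D|=3$; writing $D=z_1z_2z_3z_1$ I may assume $a_1$ is adjacent to all of $D$ and $a_2$ to $z_2,z_3$, so that $\{a_1,z_1,z_2,z_3\}$ induces a $K_4$ in $H$---the fact I will lean on throughout. For the lower bound I would observe that the $4$-cycle $C$ has no chord (a chord $w_1w_3$ or $w_2w_4$ would let me trade $C$ for a shorter triangle, beating $\mathcal C$ by~\ref{o2}); hence $\{w_1,w_3\}$ and $\{w_2,w_4\}$ are nonadjacent pairs and \ref{H2} gives $\|C,V\|=\sum_j d(w_j)\ge2(4k-3)$. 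Since Lemma~\ref{L1} forbids $\|C,B\|>8$ for every cycle $B\in\mathcal C$, the $k-2$ cycles forming $V-H$ satisfy $\|C,V-H\|\le8(k-2)$, so $\|C,H\|\ge10$. For the upper bound I would use the $K_4$: it guarantees a triangle in $H-v$ for each $v$, so a vertex of $H$ with two \emph{consecutive} neighbours $w_j,w_{j+1}$ on $C$ would create two disjoint triangles beating $\mathcal C$ by~\ref{o2}. Thus $\mathrm{type}(v)$ is defined and $\|v,C\|\le2$ for every $v\in V(H)\cap N(C)$, giving $\|C,H\|\le12$.

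For part~\ref{4edges} I would introduce the graph $H'$ on $V(H)$ with $xy\in E(H')$ exactly when $xy\in E(H)$ and $H-\{x,y\}$ still contains a triangle. By~\ref{o2} such an $x,y$ share no neighbour on $C$, so two $H'$-adjacent vertices of the same type cannot both be heavy; more sharply, once every vertex is typed, a heavy type-$i$ vertex forces each of its $H'$-neighbours into type $3-i$. A short verification shows $H'$ contains the spanning tree made of the path $z_1a_1a'a_2z_2$ and the edge $a_2z_3$. I would then split into cases. If some vertex of $H$ has no neighbour on $C$, only five vertices carry a type, so one type $i$ is used at most twice and $\|\{w_i,w_{i+2}\},H\|\le4$. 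Otherwise every vertex lies in $N(C)$, and $\|C,H\|\ge10$ forces at least four heavy vertices; taking $i=\mathrm{type}(a_2)$, the $H'$-edges out of $a_2$ drive $a',z_2,z_3$ into type $3-i$, leaving only $a_1,a_2,z_1$ able to be type $i$, and the $H'$-edge $a_1z_1$ holds the joint contribution of $a_1,z_1$ to $\|\{w_i,w_{i+2}\},H\|$ to at most $2$; with $a_2$'s contribution of $2$ this yields $\|\{w_i,w_{i+2}\},H\|\le4$.

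For part~\ref{12edges}, \ref{4edges} and \ref{H2} give $\|\{w_i,w_{i+2}\},V-H\|\ge(4k-3)-4$, so some $D'\in\mathcal C-C-D$ has $\|\{w_i,w_{i+2}\},D'\|\ge5$; then Claim~\ref{6edges} caps $\|C,D'\|\le6$ and Lemma~\ref{L1} caps the remaining cross-edges, and recomputing $\|C,V\|$ squeezes $\|C,H\|\ge12$, hence $\|C,H\|=12$. For part~\ref{pic}, equality means every vertex of $H$ is heavy, so each $H'$-edge now joins vertices of \emph{different} type; as $H'$ is connected this $2$-colouring is unique, forcing $a_1,a_2$ onto one side and $a',z_1,z_2,z_3$ onto the other, which is precisely~\ref{pic}. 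I expect the genuine difficulty to lie in~\ref{4edges}: managing the interaction of \emph{type} and \emph{heavy}/\emph{light} through $H'$---especially showing that the same-type pair $a_1,z_1$ cannot drive the type-$i$ count above $4$---is the delicate point, whereas \ref{12edges} and \ref{pic} are essentially bookkeeping built on top of the sandwich $10\le\|C,H\|\le12$.
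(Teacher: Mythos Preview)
Your approach is essentially the paper's: the sandwich $10\le\|C,H\|\le12$, the auxiliary graph $H'$, the spanning tree $z_1a_1a'a_2z_2+a_2z_3$ inside $H'$, and the bookkeeping for parts~\ref{12edges} and~\ref{pic} all match.

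There is one genuine gap in your handling of~\ref{4edges}. The assertion ``the $H'$-edges out of $a_2$ drive $a',z_2,z_3$ into type $3-i$'' uses the rule you stated just before---that a \emph{heavy} vertex forces its $H'$-neighbours to the opposite type---but you never checked that $a_2$ is heavy. If $a_2$ is light, an $H'$-neighbour $x\in\{a',z_2,z_3\}$ that is also light is free to have type $i$, and your count ``$a',z_2,z_3$ contribute $0$'' breaks down. The paper closes this by splitting on whether $a_2$ is heavy: in the light case it uses the ``at least four heavy'' count (so at most one further light vertex) to bound the stray type-$i$ contribution from $\{a',z_2,z_3\}$ by $1$, and observes that a light $a_2$ now contributes only $1$ rather than $2$, so the total is still at most $4$. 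You already have the four-heavy fact; you just need to run this case.
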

\begin{lem}
\label{lem6}\label{u_1u_3}There exists $C^{*}\in\mathcal{C}$ such
that $3\leq\left\Vert \{a_{1},a_{2}\},C^{*}\right\Vert \leq4$ and $\left\Vert \{a_{1},a_{2}\},D\right\Vert =4$
 for all $D\in\mathcal{C}-C^{*}$. If $\left\Vert \{a_{1},a_{2}\},C^{*}\right\Vert=3$ then one of $a_1,a_2$ is low. \end{lem}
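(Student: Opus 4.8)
The plan is to prove the lemma by a single double count of the edges from $\{a_1,a_2\}$ to $\bigcup\mathcal C$, together with one structural input (Claim~\ref{types}) to control triangles. Since $R=a_1a'a_2$ is a path, its internal vertex is $a'$ and so $a_1a_2\notin E$ and $\|a_1,R\|=\|a_2,R\|=1$; hence $\|\{a_1,a_2\},R\|=2$, and because $a_1a_2\notin E$, $(H2)$ gives
\[
\sum_{D\in\mathcal C}\|\{a_1,a_2\},D\|=d(a_1)+d(a_2)-\|\{a_1,a_2\},R\|=d(a_1)+d(a_2)-2\ge 4k-5=4(k-1)-1 .
\]
Granting the key inequality $\|\{a_1,a_2\},D\|\le 4$ for every $D\in\mathcal C$, I finish as follows. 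There are $k-1$ summands, each at most $4$, with total at least $4(k-1)-1$, so at most one cycle can have $\|\{a_1,a_2\},D\|<4$ (two such cycles would force the total down to at most $4k-6$). Let $C^*$ be that cycle, or, if none falls short, any cycle of $\mathcal C$. Then $\|\{a_1,a_2\},D\|=4$ for all $D\in\mathcal C-C^*$, and the displayed bound forces $\|\{a_1,a_2\},C^*\|\ge 3$. Finally, if $\|\{a_1,a_2\},C^*\|=3$ then the total is exactly $4k-5$, so $d(a_1)+d(a_2)=4k-3$; if both $a_1,a_2$ were high we would have $d(a_1)+d(a_2)\ge 4k-2$, so one of $a_1,a_2$ is low. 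Thus everything reduces to the inequality $\|\{a_1,a_2\},D\|\le 4$.

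To prove that inequality, note first that if $|D|\ge 4$ then $\|a_i,D\|\le 2$ by Claim~\ref{clm:1}(a), so $\|\{a_1,a_2\},D\|\le 4$ immediately. I may therefore assume $|D|=3$ and, for a contradiction, $\|\{a_1,a_2\},D\|\ge 5$; since each $\|a_i,D\|\le 3$, I may take $\|a_1,D\|=3$ and $\|a_2,D\|\ge 2$.

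The easy sub-case is $\|a',D\|\ge 1$, where I would exhibit two disjoint triangles in $R\cup D$. Pick $z\in N(a')\cap D$. If $a_2z\in E$, then $a'a_2z$ and $a_1(D-z)a_1$ are disjoint triangles (the latter using $\|a_1,D\|=3$). If $a_2z\notin E$, then $a_2$ misses only $z$ in $D$, so $a'a_1z$ and $a_2(D-z)a_2$ are disjoint triangles. Either way, replacing $D$ in $\mathcal C$ by these two triangles yields $k$ disjoint cycles, contradicting \ref{o1}.

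The remaining case, $\|\{a_1,a_2\},D\|\ge 5$ with $\|a',D\|=0$, is the genuine obstacle. Here I would invoke Claim~\ref{types}: its hypothesis holds, so for some $i$ (say $i=2$), $N(w_2)\cap H=N(w_4)\cap H=\{a_1,a_2\}$ and $N(w_1)\cap H=N(w_3)\cap H=V(D)\cup\{a'\}$, where $H=G[R\cup D]$; thus $w_1,w_3$ are adjacent to all of $D$ and to $a'$, while $w_2,w_4$ meet $R\cup D$ only in $a_1,a_2$. The first thing to observe is that this case is vacuous when $k=3$: the proof of Claim~\ref{types}(\ref{12edges}) produces a cycle $D'\in\mathcal C-C-D$ with $\|\{w_2,w_4\},D'\|\ge 5$, which is impossible when $\mathcal C-C-D=\emptyset$. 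For $k\ge 4$ that auxiliary triangle $D'$ exists (with $\|\{w_1,w_3\},D'\|=0$ by Claim~\ref{6edges}), and the same squeeze that proves Claim~\ref{types}(\ref{12edges}) forces $\|C,D''\|=8$, hence one of the rigid graphs of Lemma~\ref{L1}, for every other $D''$. The plan is to use this near-complete adjacency of $C$ to the other cycles to repartition a bounded union of cycles of $\mathcal C$ (together with $R$) into one more cycle than it contains, contradicting \ref{o1}. I expect this to be the hard part, and the reason is structural: $\{a_1,a_2,w_1,w_3\}$ is independent and $a'$ has no neighbour in $D$, so no exchange confined to $R\cup C\cup D$, or even to $R\cup C\cup D\cup D'$, gains a cycle; the gain must come from the global rigidity (the almost-complete bipartite links between $C$ and the remaining cycles) rather than from a purely local swap, and this is where the bulk of the case analysis will lie.
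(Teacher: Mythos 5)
Your reduction of the lemma to the single inequality $\|\{a_1,a_2\},D\|\le4$ for all $D\in\mathcal C$ is correct and coincides with the paper's closing count, and the cases you do settle are fine: $|D|\ge4$ follows from Claim~\ref{clm:1}, your two disjoint triangles handle $|D|=3$ with $\|a',D\|\ge1$ (a nice elementary shortcut the paper doesn't separate out), and the $k=3$ vacuity observation is valid. But the proposal does not prove the lemma. The case you defer --- $|D|=3$, $\|a',D\|=0$, $k\ge4$ --- is precisely the content of Lemma~\ref{lem6} beyond what Claim~\ref{types} already provides, and for it you offer only a plan (``this is where the bulk of the case analysis will lie''). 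Everything you complete is either routine pigeonhole or material already packaged into Claim~\ref{types}; the paper's proof of this lemma consists almost entirely of the step you stop at, so this is a genuine gap, not a deferred verification.

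Moreover, the plan you sketch aims at the wrong optimality criterion, which is why it looks hard. You propose to gain a $k$-th cycle, contradicting \ref{o1}, and you correctly note that no exchange confined to $R\cup C\cup D$ (or even $R\cup C\cup D\cup D'$) does that. The paper never gains a cycle here: it finishes with a \emph{local} exchange that wins by \ref{o2}. In the paper's labeling (where $\|\{w_1,w_3\},H\|\le4$ by Claim~\ref{types}(\ref{4edges})): since $w_1w_3\notin E$, the pair $\{w_1,w_3\}$ sends at least $4(k-3)+1$ edges to $\mathcal C-C-D$, so some $B\in\mathcal C-C-D$ has $\|\{w_1,w_3\},B\|\ge5$; a one-line swap using Claim~\ref{types}(\ref{pic}) forces $|B|=3$, and Claim~\ref{6edges} gives $\|C,B\|\le6$. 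Then the squeeze $2(4k-3)\le\|C,V\|\le12+6+8(k-3)=2(4k-3)$ (Claim~\ref{types}(\ref{12edges}), Lemma~\ref{L1}) is tight, so $\|C,D'\|=8$ and hence $\|\{w_2,w_4\},D'\|=4$ for every other cycle $D'$. Feeding this into the degree count for the nonadjacent pair $\{w_2,w_4\}$, whose edges to $H$ are pinned at $8$ by Claim~\ref{types}(\ref{pic}) and whose edges to $B$ number at most $6-5=1$, forces $\|\{w_2,w_4\},B\|=1$ exactly; then Claim~\ref{PC}, applied to the path $w_1w_2w_3$ and the triangle $B$, produces two disjoint cycles inside $C\cup B$ avoiding $w_4$, which beat $C,B$ by \ref{o2}. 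So the missing idea is not a global repartition at all: it is this second counting squeeze, which transfers tightness from $\{a_1,a_2\}$ through $C$ to an auxiliary triangle $B$, where a purely local length-reducing exchange finishes. Without that (or a substitute for it), the proof is incomplete.
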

\begin{proof}

Suppose $\left\Vert \{a_{1},a_{2}\},D\right\Vert \geq5$ for some  $D\in\mathcal C$; set $H:=H(D)$. Using Claim
\ref{types}, choose notation so that $\left\Vert \{w_{1},w_{3}\},H\right\Vert \leq4$. 
Now 
\[
\left\Vert \{w_{1},w_{3}\},V-H\right\Vert \geq4k-3-4=4(k-2)+1.
\]
Thus there is a cycle $B\in\mathcal{C}-D$ with $\left\Vert \{w_{1},w_{3}\},B\right\Vert \geq5$;
say $\left\Vert \{w_{1},B\}\right\Vert =3$. By Claim \ref{6edges},
$\left\Vert C,B\right\Vert \leq6$. 
Note by Claim~\ref{types}, if $|B|=4$ then for an edge $z_1z_2 \in N(w_1)$, $w_1z_1z_2w_1$ and $w_2w_3a_2a'w_2$ beat $B,C$ by \ref{o2}. Then $|B|=3$.
Using Claim~\ref{types}(\ref{12edges}) and Lemma~\ref{L1}, 
\[
2(4k-3)\leq\left\Vert C,V\right\Vert =\left\Vert C,H\cup B\cup(V-H-B)\right\Vert \leq12+6+8(k-3)=2(4k-3).
\]
Thus, $\left\Vert C,D'\right\Vert =8$ for all $D'\in \mathcal C-C-D$. By Lemma~\ref{L1}, $\left\Vert \{w_{1},w_{3}\},D'\right\Vert =\left\Vert \{w_{2},w_{4}\},D'\right\Vert =4$.
By Claim~\ref{types}(\ref{pic}) and Claim \ref{6edges},
\begin{align*}
4k-3 & \leq\left\Vert \{w_{2},w_{4}\},H\cup B\cup(V-H-B)\right\Vert \leq8+1+4(k-3)=4k-3,
\end{align*}
and so $\left\Vert \{w_{2},w_{4}\},B\right\Vert =1$. Say $\|w_2,B\|=1$. Since $|B|=3$, by Claim~\ref{PC}, $G[B\cup C-w_4]$ has two disjoint cycles that are  preferable to $C,B$ by \ref{o2}. This contradiction implies $\left\Vert \{a_{1},a_{2}\},D\right\Vert \leq4$ for all $D\in\mathcal{C}$. Since $\left\Vert \{a_{1},a_{2}\},V\right\Vert \geq{4k-3}$
and $\left\Vert \{a_{1},a_{2}\},R\right\Vert =2$, 
we get
$\left\Vert \{a_{1},a_{2}\},D\right\Vert \geq3$,
and equality holds for at most one $D\in\mathcal{C}$, and only if one of $a_1$ and $a_2$ is low.
\end{proof}

\subsection{Completion of the proof of Theorem \ref{main}.\label{contradiction}}

For an optimal $\mathcal{C}$, let $\mathcal{C}_{i}:=\{D\in\mathcal{C}:|D|=i\}$
and $t_{i}:=|\mathcal{C}_{i}|$. 
For $C\in\mathcal{C}_{4}$, let $Q_{C}:=Q_{C}(\mathcal{C}):=G[R(\mathcal{C})\cup C]$.
A $3$-path $R'$ is $\mathcal{D}$\emph{-useful} if $R'=R(\mathcal{C}')$
for an optimal set $\mathcal{C}'$ with $\mathcal{D}\subseteq\mathcal{C}'$;
we write \emph{$D$-useful} for $\{D\}$-useful. 

\begin{lem}
\label{2cases} \emph{Let} \emph{$\mathcal{C}$ be an optimal set
and $C\in\mathcal{C}_{4}$. Then }$Q=Q_{C}\in\{K_{3,4},K_{3,4}-e\}$. \end{lem}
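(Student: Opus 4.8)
The target $K_{3,4}$ must be read with the bipartition $\{w_1,w_3,a'\}$ versus $\{w_2,w_4,a_1,a_2\}$: the two edges of the path $R=a_1a'a_2$ and the four edges of $C=w_1w_2w_3w_4w_1$ are all ``crossing'' edges, so $Q=K_{3,4}$ means $a_1,a_2$ are joined to the diagonal $\{w_1,w_3\}$, $a'$ to the diagonal $\{w_2,w_4\}$, and nothing else, while $K_{3,4}-e$ permits one of these six crossing edges to be absent. The plan is thus to pin down $N(a_1)\cap C$, $N(a')\cap C$, and $N(a_2)\cap C$. Since $|C|=4$, Lemma~\ref{L0} gives $\left\Vert R,C\right\Vert\le 6$, and Claim~\ref{clm:1} gives $\left\Vert a_i,C\right\Vert\le 2$ for $i\in[2]$, the two neighbours (when there are two) being a diagonal by Claim~\ref{clm:1}(c). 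The Key Lemma~\ref{u_1u_3} yields $\left\Vert \{a_1,a_2\},C\right\Vert\ge 3$, so some endpoint, say $a_1$, has $N(a_1)\cap C=\{w_1,w_3\}$; and applying Claim~\ref{clm1.2} to the edges $a_1a'$ and $a'a_2$ forces $N(a')\cap C\subseteq\{w_2,w_4\}$ and $N(a')\cap N(a_2)\cap C=\emptyset$.

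These observations already dispose of the generic case $\left\Vert a',C\right\Vert=2$: then $N(a')\cap C=\{w_2,w_4\}$, so Claim~\ref{clm1.2} on $a'a_2$ forces $N(a_2)\cap C\subseteq\{w_1,w_3\}$, and $\left\Vert a_2,C\right\Vert\ge 1$ because $\left\Vert \{a_1,a_2\},C\right\Vert\ge 3$. Hence $\left\Vert R,C\right\Vert=4+\left\Vert a_2,C\right\Vert\in\{5,6\}$, and $Q$ is exactly $K_{3,4}$ (if $\left\Vert a_2,C\right\Vert=2$) or $K_{3,4}-e$ (if $\left\Vert a_2,C\right\Vert=1$). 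So the entire problem reduces to the sparse case $\left\Vert a',C\right\Vert\le 1$, in which I must show both that $a_2$ avoids the wrong diagonal (so $N(a_2)\cap C\subseteq\{w_1,w_3\}$) and that $\left\Vert R,C\right\Vert\ge 5$; a short check using Claim~\ref{clm:1}(c) and $N(a')\cap N(a_2)\cap C=\emptyset$ shows these two requirements are equivalent to excluding the ``stubborn'' configurations in which $a'$ and $a_2$ each have a single mismatched neighbour in $C$ (the extreme case being $N(a')\cap C=\{w_2\}$, $N(a_2)\cap C=\{w_4\}$, where $\left\Vert R,C\right\Vert=4$).

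The main obstacle is precisely these stubborn configurations. When $\left\Vert a',C\right\Vert\le 1$ a direct case-check shows that every cycle of $Q$ must run through the path $R$ and is therefore too long to leave a vertex-disjoint second cycle, so $Q$ contains no two disjoint cycles; consequently \ref{o1} cannot be contradicted inside $G[R\cup C]$ alone, and (since a $K_{3,4}-e$ with $\left\Vert R,C\right\Vert=5$ also has no two disjoint cycles) \ref{o1} does not even separate the good configurations from the too-sparse ones. To break this I would \emph{park} $a_1$ on a $4$-cycle $C_A=a_1w_1w_{\mathrm{other}}w_3a_1\subseteq Q$ that omits the vertex $w_j\in\{w_2,w_4\}$ with $a_2w_j\in E$ (such a $w_j$ exists exactly in the bad case); this frees the length-$2$ path $w_ja_2a'$, whose endpoints $w_j,a'$ are nonadjacent by Claim~\ref{clm1.2}, so $\mathcal C_A:=\mathcal C-C+C_A$ is again optimal. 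I would then bring in a second cycle $D\in\mathcal C$, which by the Key Lemma~\ref{u_1u_3} sends four edges to $\{a_1,a_2\}$ and whose intersection pattern with $C$ is pinned down by Lemma~\ref{L1} and Claim~\ref{types}; feeding the freed path into Claim~\ref{PC} on $D$ should extract two disjoint cycles inside $G[R\cup C\cup D]$, hence $k$ disjoint cycles, contradicting \ref{o1}. The degree hypothesis (H2) is exactly what makes $D$ dense enough (through the Key Lemma) for this extraction to succeed, and the Key Lemma's clause that one of $a_1,a_2$ is low when $\left\Vert \{a_1,a_2\},C^\ast\right\Vert=3$ is the signal that the tight configurations are the degree-critical ones. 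Once all sparse configurations are eliminated, the only survivors are those identified in the second paragraph, giving $Q\in\{K_{3,4},K_{3,4}-e\}$.
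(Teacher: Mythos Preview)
Your setup and the ``generic'' case $\left\Vert a',C\right\Vert=2$ are fine, and the reductions via Claim~\ref{clm:1}(c), Claim~\ref{clm1.2}, and Lemma~\ref{u_1u_3} are correct.  The gap is the ``stubborn'' case.  Your sketch invokes Claim~\ref{types} on a second cycle $D$, but that claim has hypothesis $\left\Vert\{a_1,a_2\},D\right\Vert\ge 5$, while the Key Lemma~\ref{u_1u_3} you have already used guarantees $\left\Vert\{a_1,a_2\},D\right\Vert\le 4$ for \emph{every} $D\in\mathcal C$; so Claim~\ref{types} simply does not apply here.  Likewise Lemma~\ref{L1} needs $\left\Vert C,D\right\Vert\ge 8$, which you have not established.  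Without these, your ``parking'' manoeuvre produces a new useful path but no mechanism to extract two disjoint cycles from $G[R\cup C\cup D]$.

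The paper's argument for this part (its Claim~\ref{LCtype}, that $\mathrm{type}(a_1)=\mathrm{type}(a_2)$) is the real work of the lemma and is substantially longer than your sketch suggests: it first shows $\left\Vert a',D\right\Vert\le 2$ for all $D\in\mathcal C-C$ via a swap argument, deduces $\left\Vert a',C\right\Vert=0$ and that $a'$ is low, then uses a tight edge count to force $\left\Vert C,D\right\Vert=8$ so that Lemma~\ref{L1} \emph{does} apply, and finally runs through the three structures $K_{4,4}$, $\overline{K_3}\vee(K_3+K_1)$, $K_1\vee K_{3,3}$ for $G[C\cup D]$.  The last of these does not yield a contradiction directly but instead forces $G=\mathbf Y_2$, which is excluded only by the hypothesis of Theorem~\ref{main}; your sketch never confronts this exceptional graph.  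Even once types agree, one still needs to push $\left\Vert R,C\right\Vert$ up to $5$; the paper does this by two further applications of Lemma~\ref{u_1u_3} to useful paths $R'=w_2w_3w_4$ and $R''=a_2w_1w_4$, an argument absent from your proposal.
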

\begin{proof}
Since $\mathcal{C}$ is optimal, $Q$ does not contain a $3$-cycle.
Then for all $v\in V(C)$, $N(v)\cap R$ is independent and $\left\Vert a_{1},C\right\Vert ,\left\Vert a_{2},C\right\Vert \leq2$.
By Lemma~\ref{lem6}, $\left\Vert \{a_{1},a_{2}\},C\right\Vert \geq3$.
Say $a_{1}w_{1},a_{1}w_{3}\in E$ and $\left\Vert a_{2},C\right\Vert \geq1$.
Then type($a_{1}$) and type($a_{2}$) are defined.
\begin{claim}
\label{LCtype}$\mbox{type}(a_{1})=\mbox{type}(a_{2})$.\end{claim}
\begin{proof}
Suppose not. Then $\left\Vert w_{i},R\right\Vert \leq1$ for all ${i}\in [4]$.
Say $a_{2}w_{2}\in E$. If $w_{i}a_{j}\in E$ and $\|a_{3-j},C\|=2$, let $R_{i}=w_{i}a_{j}a'$
and $C_{i}=a_{3-j}(C-w_{i})a_{3-j}$ (see Figure~\ref{R1C1}). Then $R_{i}$
is $(\mathcal C-C+C_{i})$-useful. Let $\lambda(X)$ be the number of low vertices
in $X\subseteq V$. As $Q$ does not contain a $3$-cycle, $\lambda(R)+\lambda(C)\leq2$.
We claim:
\begin{equation}
\forall D\in\mathcal{C}-C,~~\left\Vert a',D\right\Vert \leq2.\label{a'low}
\end{equation}
Fix $D\in\mathcal{C}-C$, and suppose
$\left\Vert a',D\right\Vert \geq3$. By Claim~\ref{clm:1}, $|D|=3$. Since

\noindent 
\begin{eqnarray}
\left\Vert C,D\right\Vert  & = & \left\Vert C,\mathcal C\right\Vert -\left\Vert C,\mathcal{C}-D\right\Vert \nonumber \\
 & \geq & 4(2k-1)-\lambda(C)-\left\Vert C,R\right\Vert -8(k-2)\notag\\
 & = & 12-\left\Vert C,R\right\Vert -\lambda(C)\geq6+\lambda(R),\label{CD}
\label{CD7}\end{eqnarray}
we get that  $\left\Vert w_{i},D\right\Vert \geq2$ for some $i\in[4]$.
 If $R_{i}$ is defined, $R_{i}$ is $\{C_{i},D\}$-useful.
 By Lemma~\ref{lem6},
$\left\Vert \{w_{i},a'\},D\right\Vert \leq4$. As $\left\Vert w_{i},D\right\Vert \geq2$,
$\left\Vert a',D\right\Vert \leq2$, proving (\ref{a'low}). 
Then $R_i$ is not defined, so $a_2$ is low with $N(a_2) \cap C = \{w_2\}$ and $\|w_2,D\|\leq 1$.
Then by (\ref{CD7}), $\|C-w_2,D\| \geq 6$. Note $G[a'+D]=K_4$, so for any $z \in D$, $D-z+a'$ is a triangle, so by \ref{o2} the neighbors of $z$ in $C$ are independent. Then $\|C-w_2,D\| = 6$ with $N(z) \cap C = \{w_1,w_3\}$ for every $z \in D$. Then $\|w_2,D\|=1$, say $zw_2 \in E(G)$, and now $w_2w_3zw_2$, $w_1(D-z)w_1$ beat $C,D$ by \ref{o2}.

\begin{figure}[ht]
 \begin{tikzpicture}[scale=0.75]
  \draw (-1,0) node[vertex,
  	 label=below:$w_1$,draw](w1){};
 \draw (-1,2) node[vertex,
  	 label=$w_2$,draw](w2){};
 \draw (1,2) node[vertex,
  	 label=$w_3$,draw](w3){};	
 \draw (1,0) node[vertex,
  	 label=below:$w_4$,draw](w4){};
\draw (w1) -- (w2) -- (w3) -- (w4) -- (w1);
 \draw (-3,-2) node[vertex,
  	 label=below:$a_1$,draw](a1){};
 \draw (3,-2) node[vertex,
  	 label=below:$a_2$,draw](a2){};
 \draw (0,-2.5) node[vertex,draw](a'){};
\draw (a1) -- (a') -- (a2);
\draw (w1) -- (a1);
\draw (w4) -- (a2);
\draw (a1) to[out=10,in=260] (w3);
\draw (a2) to[out=170,in=280] (w2);
\draw[dotted, line width=3pt] (w1) -- (a1) -- (a');
\draw[dotted, line width=3pt] (a2) -- (w4) -- (w3) -- (w2);
\draw[dotted, line width=3pt] (a2) to[out=170,in=280] (w2);
\draw (-3,-1) node {$R_1$};
\draw (2,1) node {$C_1$};
\end{tikzpicture}
\caption{Claim \ref{LCtype}}\label{R1C1}
\end{figure}
%

If $\|a',C\|\ge1$ then
 $a'w_{4}\in E$ and $N(a_{2})\cap C=\{w_{2}\}$.
Now $R_2$ is $C_2$-useful,
$\mbox{type}(a')\ne\mbox{type}(w_{2})$
with respect to $C_2$, and the middle vertex $a_2$ of $R_2$ has no neighbors
in $C_2$. Thus we may assume $\left\Vert a',C\right\Vert =0$. Then
$a'$ is low:
\begin{equation}
d(a')=\left\Vert a',C\cup R\right\Vert +\left\Vert a',\mathcal{C}-C\right\Vert \leq0+2+2(k-2)=2k-2.\label{a'}
\end{equation}
Thus all vertices of $C$ are high. Using Lemma~\ref{L1}, this yields:
\begin{equation}
4\geq\left\Vert C,R\right\Vert =\left\Vert C,V-(V-R)\right\Vert \geq4(2k-1)-8(k-1)=4.\label{8}
\end{equation}
As this calculation is tight, $d(w)=2k-1$ for every $w\in C$.
Thus $d(a')\ge 2k-2$, so (\ref{a'}) is tight. Hence $\left\Vert a',D\right\Vert =2$
for all $D\in\mathcal{C}-C$.

Pick $D=z_{1}\dots z_{t}z_{1}\in\mathcal{C}-C$ with $\left\Vert \{a_{1},a_{2}\},D\right\Vert $
maximum. By Lemma~\ref{lem6}, $3\leq\left\Vert \{a_{1},a_{2}\},D\right\Vert \leq4$.
$ $ Say $\left\Vert a_{i},D\right\Vert \geq 2$. By (\ref{8}), $\left\Vert C,D\right\Vert =8$.
By Lemma~\ref{L1}, 
\[
W:=G[C\cup D]\in\{K_{4,4},\,\,\overline{K}_{3}\vee(K_{3}+K_{1}),\,\, K_{1}\vee K_{3,3}\}.
\]

\noindent CASE 1: $W=K_{4,4}$. Then $\left\Vert D,R\right\Vert \geq5>|D|=4$,
so $\left\Vert z,R\right\Vert \geq2$ for some $z\in V(D)$. Let $w\in N(z)\cap C$.
Either $w$ and $z$ have a comon neighbor in $\{a_1,a_2\}$ or $z$ has
two consecutive neighbors in $R$. Regardless, $G[R+w+z]$ contains a
$3$-cycle $D'$ and $G[W-w-z]$ contains a $4$-cycle $C'$. Thus
$C',D'$ beats $C,D$ by \ref{o2}.

\noindent CASE 2: $W=\overline{K}_{3}\vee(K_{3}+K_{1})$. As $\left\Vert \{a',a_{i}\},D\right\Vert \ge 4>|D|$,
there is $z\in V(D)$ with $D':=za'a_{i}z\subseteq G$. Also $W-z$
contains a $3$-cycle $C'$, so $C',D'$ beats $C,D$
by \ref{o2}.

\noindent CASE 3: $W=K_{1}\vee K_{3,3}$. Some $v\in V(D)$ satisfies
$\left\Vert v,W\right\Vert =6$. There is no $w \in W-v$ such that
$w$ has two adjacent neighbors in $R$: else $a$
and $v$ would be contained in disjoint $3$-cycles, contradicting
the choice of $C,D$. Then $\left\Vert w,R\right\Vert \leq1$
for all $w\in W-v$, because type($a_1) \neq$ type($a_2$). Similarly, no $z\in D-v$ has two adjacent neighbors
in $R$. Thus
\[
2+3\leq\left\Vert a',D\right\Vert +\left\Vert \{a_{1},a_{2}\},D\right\Vert =\left\Vert R,D\right\Vert =\left\Vert R,D-v\right\Vert +\left\Vert R,v\right\Vert \leq2+3,
\]
so $\left\Vert \{a_{1},a_{2}\},D\right\Vert =3$, $R\subseteq N(v)$,
and  $N(a_{i})\cap K_{3,3}$  is independent.
 By Lemma~\ref{lem6} and the maximality of $\left\Vert \{a_{1},a_{2}\},D\right\Vert =3$,
$k=3$. Thus $G=\mathbf{Y}_{2}$, a contradiction.\end{proof}

Returning to the proof of Lemma~\ref{2cases}, we have $\mbox{type}(a_{1})=\mbox{type}(a_{2})$.
Using Lemma~\ref{lem6}, choose notation so that $a_{1}w_{1},a_{1}w_{3},a_{2}w_{1}\in E$.
Then $Q$ has bipartition $\{X,Y\}$ with $X:=\{a',w_{1},w_{3}\}$
and $Y:=\{a_{1},a_{2},w_{2},w_{4}\}$. The only possible nonedges
between $X$ and $Y$ are $a'w_{2}$, $a'w_{4}$ and $a_{2}w_{3}$.
Let $C':=w_{1}Rw_{1}$. Then $R':=w_{2}w_{3}w_{4}$ is $C'$-useful.
By Lemma~\ref{lem6}, $\left\Vert \{w_{2},w_{4}\},C'\right\Vert \geq3$.
Already $w_{2},w_{4}\in N(w_{1})$; so because $Q$ has no $C_3$, (say) $a'w_{2}\in E$.$ $
Now, let $C'':=a_{1}a'w_{2}w_{3}a_{1}$. Then $R'':=a_{2}w_{1}w_{4}$
is $C''$-useful; so $\left\Vert \{a_{2},w_{4}\},C''\right\Vert \geq3$.
Again, $Q$ contains no $C_3$, so $a'w_{4}$ or $a_{2}w_{3}$ is an edge of $G$. Thus
$Q\in\{K_{3,4},K_{3,4}-e\}$. 
\end{proof}

\begin{proof}[Proof of Theorem~\ref{main}]

 Using Lemma~\ref{2cases}, one of two cases holds:
\begin{enumerate}[label=(C\arabic*), ref=(C\arabic*)]
\item \noindent \label{C1} For some optimal
set $\mathcal{C}\mbox{ and }C'\in\mathcal{C}_{4},$ $Q_{C'}=K_{3,4}-x_{0}y_{0}$; 
\item \noindent \label{C2} for all optimal sets $\mathcal{C}\mbox{ and }C\in\mathcal{C}_{4}$, $G[R\cup C]=K_{3,4}$.
\end{enumerate}
\noindent Fix an optimal set $\mathcal{C}$ 
and $C'\in\mathcal{C}_{4}$,
where $R=y_0x'y$ with $d(y_0) \leq d(y)$,
such that in \ref{C1}, $Q_{C'}=K_{3,4}-x_0y_0$.
By
 Lemmas~\ref{lem6} and \ref{2cases},
for all $C\in\mathcal{C}_{4}$, $1\leq\left\Vert y_{0},C\right\Vert \leq\left\Vert y,C\right\Vert \leq2$
and $\left\Vert y_{0},C\right\Vert =1$
only in Case~\ref{C1} when $C=C'$. Put $H:=R\cup\bigcup\mathcal{C}_{4}$,  $S=S(\mathcal C):=N(y)\cap H$, and $T=T(\mathcal C):=V(H)\smallsetminus S$.
 As $\|y,R\|=1$  and $\|y,C\|=2$ for  each $C\in\mathcal C_4$,
 $|S|=1+2t_4=|T|-1$. 
\begin{claim}
\label{lem11}$H$ is a bipartite graph with parts $S$ and $T$. In case \ref{C1}, $H=K_{2t_{4}+1,2t_{4}+2}{-x_{0}y_{0}}$;
else $H=K_{2t_{4}+1,2t_{4}+2}$.\end{claim}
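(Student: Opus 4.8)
My plan is to localise $S$ and $T$ via Lemma~\ref{2cases}, to establish bipartiteness by a cycle-length exchange, and to force completeness from the Ore condition. First I would identify the two parts. By Lemma~\ref{2cases}, for each $C\in\mathcal C_4$ the graph $Q_C=G[R\cup C]$ is $K_{3,4}$, or $K_{3,4}-x_0y_0$ when $C=C'$ in Case~\ref{C1}; its side of size three consists of $x'$ and the two vertices of $C$ in $N(y)$, and its side of size four consists of $y_0,y$ and the other two vertices of $C$. Since the deleted edge $x_0y_0$ is incident to $y_0\ne y$, the set $N(y)\cap V(Q_C)$ is the size-three side for every $C$, and these sides all contain $x'$. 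Hence $S=N(y)\cap H$ is their union and $T=V(H)\setminus S$ is the union of the size-four sides, which all contain $y_0$ and $y$; this recovers $|S|=2t_4+1$, $|T|=2t_4+2$, and shows that $S$ and $T$ meet each $Q_C$ in its two sides.

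Next I would show $S$ and $T$ are independent, so that $H$ is bipartite with parts $S,T$. Independence inside a single $Q_C$ is immediate, so suppose an edge $uu'$ joins $u\in C$ and $u'\in D$ with $C\ne D$ in $\mathcal C_4$ and, say, $u,u'\in S$. Then $u,u'\in N(y)$, and they also share a neighbour in $\{y_0,y\}$; writing $C=w_1w_2w_3w_4w_1$ with $u=w_1$, I would pair the triangle through $uu'$ and that common $R$-neighbour with the $4$-cycle $x'w_2w_3w_4x'$. These are disjoint, have total length $3+4=7<8=|C|+|D|$, and leave the remaining vertices acyclic, so they beat $C,D$ by~\ref{o2} (or yield $k$ cycles, contradicting~\ref{o1}). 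The case $u,u'\in T$ is symmetric, using the triangle $x'uu'$ and the $4$-cycle $yw_3w_4w_1y$. Thus no monochromatic edge exists.

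For completeness, the edges inside each $Q_C$ and all $R$-to-cycle edges are present by Lemma~\ref{2cases}, the only possible absence being $x_0y_0$; so it remains to join each $u\in S\cap C$ to each $z\in T\cap D$ with $C\ne D$. Here I would use $\sigma_2(G)\ge 4k-3$. Independence of $S$ and $T$ forces at most one low vertex in each, so almost every vertex of $H$ is high; plugging these degree bounds into a global edge count, with $\|C,D\|\le 8$ from Lemma~\ref{L1}, the bound of Claim~\ref{6edges}, and the $R$- and $3$-cycle contributions controlled by Claims~\ref{clm:1} and~\ref{types}, should make all inequalities tight and give $\|C,D\|=8$ for every pair of $4$-cycles. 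As $G[C\cup D]$ is already bipartite, the only possibility in Lemma~\ref{L1} is $K_{4,4}$, which supplies every cross edge; the lone genuine non-edge is then $x_0y_0$.

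The hard part will be this final count. A single application of the Ore bound only yields that $H$ contains $K_{|S|,|T|}$ minus a matching (each $S$-vertex can miss at most one $T$-vertex, and conversely), so the real work is to collapse this matching to the single edge $x_0y_0$: this needs the rigid structure of $Q_{C'}$ together with Lemma~\ref{lem6} and, where a low vertex survives, the independence hypothesis~\ref{H3}. Controlling the joint contribution of a non-adjacent pair $u,z$ to each $3$-cycle, so that the count is genuinely tight, is the other delicate point and will again rest on the length-minimality~\ref{o2}.
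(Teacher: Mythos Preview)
Your identification of $S$ and $T$ via Lemma~\ref{2cases}, and your argument that $S$ and $T$ are independent, are both correct and essentially match the paper's reasoning (the paper compresses your exchange into the single line ``if any edge exists inside $S$ or $T$, then $C_3+(t_4-1)C_4\subseteq H$, and these cycles beat $\mathcal C_4$ by~\ref{o2}'').

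The gap is in completeness. You want to force $\|C,D\|=8$ for every pair $C,D\in\mathcal C_4$ by a global edge count against the Ore bound, and you correctly flag this as the hard part. In fact the count does not close: for $C\in\mathcal C_4$ one gets $8k-4\le\sum_{w\in C}d(w)\le 8+\|C,R\|+8(k-2)\le 8k-2$, which leaves slack of two, and the bounds you cite (Claim~\ref{6edges}, Claim~\ref{types}) concern special configurations rather than furnishing uniform upper bounds on $\|H,\mathcal C_3\|$. There is no evident way to collapse the resulting ``$K_{|S|,|T|}$ minus a matching'' to a single missing edge by counting alone.

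The paper bypasses the count entirely with a swap. For $t\in T\smallsetminus\{y,y_0\}$, say $t\in C\in\mathcal C_4$, set $C^*:=y(C-t)y$ and $R^*:=tx'y_0$; then $R^*$ is $(\mathcal C-C+C^*)$-useful, so Lemma~\ref{2cases} applies with $t$ now an \emph{endpoint} of the path. Since the bipartitions of each $Q_D$ are forced (they all contain $x'$ on the size-three side and $y_0$ on the size-four side), this immediately yields $\|t,S\|=|S|$, with the choice of $y_0$ ensuring that any missing edge, in Case~\ref{C1}, is still $x_0y_0$. This single swap replaces your entire count and gives completeness directly; you should use it instead.
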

\begin{proof}
By Lemma \ref{2cases}, $\|x',S\|=\|y,T\|=\|y_0,T\|=0$.

By Lemmas~\ref{lem6} and \ref{2cases}, $\|y_0,S\|=|S|-1$ in \ref{C1} and $\|y_0,S\|=|S|$ otherwise. We claim that for every $t \in T - y_0$, $\|t,S\|=|S|$. This clearly holds for $y$, so take $t \in H-\{y,y_0\}$. Then $t \in C$ for some $C \in \mathcal C_4$. Let $\mathcal R^* := tx'y_0$ and $\mathcal C^*:=y(C-t)y$. (Note $R^*$ is a path and $C^*$ is a cycle by Lemma~\ref{2cases} and the choice of $y_0$.) Since $R^*$ is $C^*$-useful, by Lemmas~\ref{lem6} and \ref{2cases}, and by choice of $y_0$, $\|t,S\|=\|y,S\|=|S|$. Then in \ref{C1}, $H \supseteq K_{2t_{4}+1,2t_{4}+2}{-x_{0}y_{0}}$ and $x_0y_0 \not\in E(H)$;
else $H\supseteq K_{2t_{4}+1,2t_{4}+2}$.
  
  Now we easily see that if any edge exists inside $S$ or $T$, then $C_3+(t_4-1)C_4 \subseteq H$, and these cycles beat $\mathcal C_4$ by \ref{o2}.
\end{proof}

By Claim~\ref{lem11} all  pairs of vertices of $T$ are the ends of a $\mathcal C_3$-useful path. Now we use Lemma~\ref{lem6} to show that they have essentially the same degree to each cycle in $\mathcal C_3$.

\begin{claim}
\label{lem12}If $v\in T$ and $D\in\mathcal{C}_{3}$ then $1\leq\left\Vert v,D\right\Vert \leq2$;
if $\left\Vert v,D\right\Vert =1$ then $v$ is low and 
for all $C\in\mathcal{C}_{3}-D$, $\left\Vert v,C\right\Vert =2$.\end{claim}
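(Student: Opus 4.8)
The plan is to reduce everything to the single inequality $\|v,D\|\le 2$; once that is in hand, the remaining assertions fall out quickly. By the sentence preceding the claim, every vertex of $T$ is an endpoint of a $\mathcal C_3$-useful path, so I would fix an optimal set $\mathcal C'$ with $\mathcal C_3\subseteq\mathcal C'$ whose remainder is a $3$-path $R'=vb'u$ having $v$ as an endpoint; here $u\in T$, and by Claim~\ref{lem11} the interior satisfies $b'\in S$. Since $v\in V(R')$, Claim~\ref{clm:1} already gives $\|v,D\|\le 3$, with equality possible only when $|D|=3$ (which holds). So the whole difficulty is to exclude $\|v,D\|=3$.

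The exclusion of $\|v,D\|=3$ is the main obstacle. Suppose $v$ is adjacent to all of $D=z_1z_2z_3$. Lemma~\ref{lem6}, applied to $\mathcal C'$ with ends $v,u$, yields $\|\{v,u\},D\|\le 4$ and hence $\|u,D\|\le 1$. I would first invoke Claim~\ref{PC} with $P=R'$, $v_1=v$, $i=2$: statement (1a) holds, so statement (2) must fail; since $\|u,D\|\le 1$ already defeats (2d), avoiding a pair of disjoint cycles (which would beat $\mathcal C'$ by \ref{o1}) forces that no vertex of $D$ is adjacent to both $b'$ and $u$. When $\|u,D\|=1$, choosing the triangle $vz_iz_j$ so that the leftover $z_m$ is the unique neighbor of $u$ in $D$ then pins down $b'z_m\notin E$, after which one rebuilds a better collection. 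When $\|u,D\|=0$ (or the above is inconclusive) I would instead exploit the near-complete bipartite structure $H=K_{2t_4+1,\,2t_4+2}$ (minus at most one edge) from Claim~\ref{lem11}: absorbing $v$ into $D$ as a triangle frees $v$ from $R'$, and the vertices of $(H-v)$ together with the released $z_m$ and the remaining $4$-cycles can be reassembled so that one $4$-cycle and $D$ are replaced by two triangles, beating $\mathcal C'$ by \ref{o2}. The genuine case analysis here—splitting on whether $t_4=0$, on the location of $N(u)\cap D$, and on whether the interior of $R'$ reaches $D$—is where the real effort lies.

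Granting $\|v,D\|\le 2$ for every $v\in T$, the lower bound is immediate: if $\|v,D\|=0$, then for a $\mathcal C_3$-useful path $vb'u$ Lemma~\ref{lem6} gives $\|\{v,u\},D\|\ge 3$, so $\|u,D\|\ge 3$, contradicting the upper bound applied to $u\in T$. Hence $1\le\|v,D\|\le 2$.

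For the dichotomy I would count degrees. By Claim~\ref{lem11}, $H$ is bipartite and $v\in T$, so all edges of $v$ inside $H$ land in $S$: thus $\|v,H\|=\|v,S\|=2t_4+1$ (one less if $v=y_0$ in case~\ref{C1}, which I treat separately), and $d(v)=\|v,S\|+\|v,\bigcup\mathcal C_3\|$. If $v$ were high, then $d(v)\ge 2k-1=2t_3+2t_4+1$ would force $\|v,\bigcup\mathcal C_3\|\ge 2t_3$; since each of the $t_3$ summands is at most $2$, every one equals $2$, making $\|v,D\|=1$ impossible. Therefore $\|v,D\|=1$ implies $v$ is low. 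Finally, assuming $\|v,D\|=1$, Lemma~\ref{lem6} on the useful path $vb'u$ gives $\|u,D\|\ge 2$, so $\|\{v,u\},D\|=3$; thus $D$ is the unique exceptional cycle $C^{*}$ of Lemma~\ref{lem6}, whence $\|\{v,u\},C\|=4$ for every other $C\in\mathcal C_3-D$. As $\|v,C\|\le 2$ and $\|u,C\|\le 2$, this forces $\|v,C\|=2$ for all $C\in\mathcal C_3-D$, which is the last assertion.
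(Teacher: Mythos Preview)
Your argument for the lower bound and for the two final assertions is correct and close to the paper's, but the heart of the claim---the inequality $\|v,D\|\le 2$---is not established. You yourself flag ``the genuine case analysis here\dots is where the real effort lies'' and then do not carry it out; the sketch (invoking Claim~\ref{PC}, absorbing $v$ into a triangle, reassembling $4$-cycles) is too vague to count as a proof, and some of it is off target (for instance $t_4=0$ never occurs, by Lemma~\ref{R=00003D3}).

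The missing idea is to use \emph{two} auxiliary vertices of $T$ rather than one. Pick $y_1,y_2\in T-v$ and $u\in S-x_0$; by Claim~\ref{lem11} each of $y_1uv$, $y_2uv$, $y_1uy_2$ is $\mathcal C_3$-useful, so Lemma~\ref{lem6} gives
\[
3\le \|\{v,y_1\},D\|,\ \|\{v,y_2\},D\|,\ \|\{y_1,y_2\},D\|\le 4.
\]
From $3\le \|y_1,D\|+\|y_2,D\|\le 4$ one of $y_1,y_2$ has at most two neighbours in $D$ and the other at least two; say $\|y_1,D\|\le 2\le\|y_2,D\|$. Pairing $v$ with the correct $y_j$ now gives both bounds at once:
\[
\|v,D\|=\|\{v,y_1\},D\|-\|y_1,D\|\ge 3-2=1,\qquad
\|v,D\|=\|\{v,y_2\},D\|-\|y_2,D\|\le 4-2=2.
\]
With a single auxiliary $u$ you only know $\|\{v,u\},D\|\le 4$, which says nothing about $\|v,D\|$ unless you already control $\|u,D\|$ from below; the second auxiliary vertex is exactly what supplies that control. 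Once $1\le\|v,D\|\le 2$ is in hand, your degree count showing that $\|v,D\|=1$ forces $v$ low, and your use of the uniqueness of $C^*$ in Lemma~\ref{lem6} to get $\|v,C\|=2$ for $C\in\mathcal C_3-D$, are both fine.
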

\begin{proof}
By Claim~\ref{lem11}, $H+x_0y_0$ is a complete bipartite graph. Let $y_{1},y_{2}\in T-v$ and $u\in S-x_0$. Then
$R'=y_1uv$, $R''=y_2uv$, and $R'''=y_1uy_2$
are $\mathcal{C}_{3}$-useful. By Lemma~\ref{lem6},
$$3\leq\left\Vert \{v,y_{1}\},D\right\Vert,\left\Vert \{v,y_{2}\},D\right\Vert,\left\Vert \{y_{1},y_{2}\},D\right\Vert \leq4.$$
 Say $\left\Vert y_{1},D\right\Vert \leq2\leq\left\Vert y_{2},D\right\Vert $.
Thus 
\[
1\leq\left\Vert \{v,y_{1}\},D\right\Vert-\left\Vert y_{1},D\right\Vert =\left\Vert v,D\right\Vert =\left\Vert \{v,y_{2}\},D\right\Vert-\left\Vert y_{2},D\right\Vert \leq2.
\]
Suppose $\left\Vert v,D\right\Vert =1$. By Claim~\ref{lem11} and Lemma~\ref{lem6}, for any $v' \in T-v$,
\[
4k-3\leq\left\Vert \{v,v'\},H\cup(\mathcal{C}_{3}-D)\cup D\right\Vert \leq2(2t_{4}+1)+4(t_{3}-1)+3=4k-3.
\]
Thus for all $C\in\mathcal{C}_{3}-D_{0}$, $\left\Vert \{v,v'\},C\right\Vert =4$, and so $\left\Vert v,C\right\Vert =2$. Hence $v$ is low.
\end{proof}

Next we show that all vertices in $T$ have essentially the same neighborhood in each  $C\in \mathcal C_3$.

\begin{claim}\label{lem14-1}
Let $z\in D\in\mathcal{C}_{3}$ and $v,w\in T$ with $w$ high.
\begin{enumerate}
\item If $zv\in E$ and $zw\notin E$ then $T-w\subseteq N(z).$
\item $N(v)\cap D\subseteq N(w)\cap D$.
\end{enumerate}
\end{claim}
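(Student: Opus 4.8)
The plan is to run everything through the near-complete-bipartite structure of $H$ from Claim~\ref{lem11} together with the optimality criteria \ref{o1} and \ref{o2}, reconfiguring $\mathcal C$ along $\mathcal C_3$-useful paths. The engine is this: for $p,q\in T$ and $s\in S$ with $ps,sq\in E$, the path $psq$ is $\mathcal C_3$-useful, since deleting $\{p,s,q\}$ from $H$ leaves a (near-)complete bipartite graph with parts of sizes $2t_4$ and $2t_4$, hence $t_4$ disjoint $4$-cycles by Claim~\ref{lem11}. Consequently, if $G[D\cup\{p,s,q\}]$ ever contains two disjoint cycles, then together with $\mathcal C_3-D$ and these $t_4$ reformed $4$-cycles we would obtain $(t_3-1)+t_4+2=k$ disjoint cycles, which is impossible. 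This ``two disjoint triangles inside $D\cup(\text{useful path})$'' is the contradiction I aim for.

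First I would record what $w$ being high forces. As $T$ is independent and $\|w,C\|\le 2$ for each $C\in\mathcal C_3$ (Claim~\ref{lem12}), while $\|w,H\|=\|w,S\|\le|S|=2t_4+1$, we get $d(w)\le(2t_4+1)+2t_3=2k-1$. Since $w$ is high this is tight, so $w$ is adjacent to all of $S$ and $\|w,C\|=2$ for every $C\in\mathcal C_3$; in particular $N(w)\cap D=D-z=:\{z',z''\}$, and $wz'z''$ is a triangle. For (1), assume $zv\in E$ and $zw\notin E$. I would first show $N(z)\cap S=\emptyset$: if some $s\in S$ had $sz\in E$, then (choosing $s$, and if necessary the neighbor $v$ of $z$, so as to avoid the single possibly-missing edge $x_0y_0$ of Case~\ref{C1}) the path $vsw$ is $\mathcal C_3$-useful and $G[D\cup\{v,s,w\}]$ contains the two disjoint triangles $wz'z''$ and $vzs$ --- the forbidden configuration. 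With $N(z)\cap S=\emptyset$ in hand I would pair $z$ against $w$ through hypothesis~\ref{H2}: since $zw\notin E$, $d(z)\ge(4k-3)-d(w)=2k-2$. Writing $d(z)=2+\|z,T\|+\|z,\bigcup(\mathcal C_3-D)\|$ and bounding the spread of $z$ into the remaining triangles by \ref{o2} (a vertex of $D$ adjacent to all of another triangle, or to too long an arc, would yield a shorter or additional cycle), this lower bound pushes $\|z,T-w\|$ up to its maximum $2t_4+1$, giving $T-w\subseteq N(z)$.

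Finally, (2) follows from (1). If $N(v)\cap D\not\subseteq N(w)\cap D$, then, because $\|w,D\|=2$ forces $N(w)\cap D=\{z',z''\}$, there is $z\in(N(v)\cap D)\setminus N(w)$; now (1) gives $T-w\subseteq N(z)$ and the previous step gives $N(z)\cap S=\emptyset$. Thus $z$ is adjacent to all of the independent set $T$ except $w$, yet misses $S$ and $w$. I would then derive the contradiction by pairing $z$ with $w$ via \ref{H2} (forcing the low/high dichotomy on $T$ and, typically, a second high vertex $v'\in T$ with $N(v')\cap D=\{z',z''\}$) and reconfiguring $D$, a useful path, and one $4$-cycle of $\mathcal C_4$ so as either to exhibit the forbidden two-disjoint-triangle configuration, to contradict the count in Lemma~\ref{lem6}, or to produce an independent set exceeding $|G|-2k$ against \ref{H3}. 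Hence no such $z$ exists and $N(v)\cap D\subseteq N(w)\cap D$.

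The main obstacle, at every step, is controlling two adjacencies that decide whether the two-triangle configuration closes up: the adjacency of the \emph{middle} $S$-vertex of the useful path to $D$, and the spread of $z$ across $\mathcal C_3-D$. These are exactly the quantities left uncontrolled by Claims~\ref{lem11} and \ref{lem12}, and pinning them down requires repeated appeals to Lemma~\ref{lem6}, Claim~\ref{PC} and the minimality \ref{o2}, together with careful bookkeeping around the unique possibly-missing edge $x_0y_0$ in Case~\ref{C1}; the delicate part of (2) is ensuring that the final reconfiguration genuinely beats $\mathcal C$ rather than merely re-describing it.
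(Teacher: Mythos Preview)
Your approach to (1) has a genuine counting gap. After (correctly) computing $d(w)=2k-1$ and attempting to show $N(z)\cap S=\emptyset$, you write
\[
d(z)\ \ge\ (4k-3)-d(w)=2k-2,\qquad d(z)=2+\|z,T\|+\|z,\textstyle\bigcup(\mathcal C_3-D)\|,
\]
and then try to force $\|z,T-w\|=2t_4+1$ by bounding $\|z,D'\|$ for $D'\in\mathcal C_3-D$ via \ref{o2}. Two problems: first, \ref{o2} does \emph{not} yield $\|z,D'\|\le 2$ here, since both $D$ and $D'$ are triangles and no rearrangement of $D\cup D'$ can produce cycles of total length below $6$ or a third cycle without additional edges you have not established. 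Second, even if you had $\|z,D'\|\le 2$ for every $D'\in\mathcal C_3-D$, your inequality only gives
\[
\|z,T\|\ \ge\ (2k-4)-2(t_3-1)=2t_4,
\]
one short of the required $2t_4+1=|T-w|$. So the degree argument cannot close; the missing unit is exactly what forces the swap below. (There is also an unresolved edge case in your proof of $N(z)\cap S=\emptyset$: if the unique $s\in N(z)\cap S$ is $x_0$ and $v=y_0$, the triangle $vzs$ does not form and you have no other known $T$-neighbour of $z$ to substitute for $v$.)

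The paper bypasses all of this by \emph{swapping} rather than counting: set $D^*:=wz'z''w$ (a triangle, since $N(w)\cap D=D-z$) and $R':=zvu$ for some $u\in S-x_0$; this yields an optimal $\mathcal C'$ with $\mathcal C_3-D+D^*\subseteq\mathcal C'$. Now apply Claim~\ref{lem11} to $\mathcal C'$: since $z$ is an end of $R'$, one has $T(\mathcal C')=S+z$ and $S(\mathcal C')=T-w$, and the (near-)complete bipartite structure gives $T-w\subseteq N(z)$ directly. The \ref{C1} case is handled by a short low/low contradiction rather than by ``choosing $s$ or $v$''. For (2), the paper simply iterates (1) with three high vertices $w,w',w''\in T$; this produces three distinct $z,z',z''\in D$ each adjacent to all but one vertex of $T$, and then any $x\in T\smallsetminus\{w,w',w''\}$ has $\|x,D\|=3$, contradicting Claim~\ref{lem12}. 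Your sketch for (2) never reaches this pigeonhole and remains too vague to constitute a proof.
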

\begin{proof}
(1) Since $w$ is high, Claim~\ref{lem12} implies $\left\Vert w,D\right\Vert =2$.
Since $zw\notin E$, we see $D':=w(D-z)w$ is a $3$-cycle. Let $u\in S-x_0$.
 Then $zvu=R(\mathcal{C}')$ for some optimal set $\mathcal{C}'$
with $\mathcal{C}_{3}-D+D'\subseteq\mathcal{C}'$. By Claim~\ref{lem11},  $T(\mathcal{C}')=S+z$
and $S(\mathcal{C}')=T-w$.
If \ref{C2} holds, then $T-w=S(\mathcal C') \subseteq N(z)$, as desired. Suppose \ref{C1} holds, so 
 there are  $x_0 \in S$ and  $y_0 \in T$ with $x_0y_0\notin E$. 
 By Claims~\ref{lem11} and \ref{lem12}, $d(y_0) \leq (|S|-1)+2t_3=2k-2$, so $y_0$ is low. 
 Since $w$ is high, we see $y_0 \in T-w$. But now apply
 Claims~\ref{lem11} and \ref{lem12}  to $T(\mathcal C')$: $d(x_0) \leq |S(\mathcal C')|-1+2t_3 = 2k-2$, and $x_0$ is low. As $x_0y_0 \notin E$, this is a contradiction. 
 Now $T-w=S(\mathcal{C}')\subseteq N(z)$.

(2) Suppose there exists $z\in N(v)\cap D\smallsetminus N(w)$. By
(1), $T-w\subseteq N(z)$. Let $w'\in T-w$ be high. By Claim~\ref{lem12},
$\left\Vert w',D\right\Vert =2$. Now there exists $z'\in N(w)\cap D\smallsetminus N(w')$
and $z\ne z'$. By (1), $T-w'\subseteq N(z')$. As $|T|\geq4$ and
at least three of its vertices are high, there exists a high $w''\in T-w-w'$.
Since $w''z,w''z'\in E$, there exists $z''\in N(w)\cap D\smallsetminus N(w'')$
with $\{z,z',z''\}=V(D)$. By (1), $T-w''\subseteq N(z'')$. Since
$|T|\geq4$, there exists $x\in T\smallsetminus\{w,w',w''\}$. Now $\left\Vert x,D\right\Vert =3$,
contradicting Claim~\ref{lem12}. \end{proof}

Let $y_1,y_2 \in T-y_0$ and let $x \in S$ with $x=x_0$ if $x_0y_0 \not\in E$. By Claim~\ref{lem11},
$y_1xy_2$ is a path, and $G-\{y_1,y_2,x\}$ contains an optimal set $\mathcal C'$.
Recall $y_0$ was chosen in $T$ with minimum degree, so $y_1$ and $y_2$ are high and by Claim~\ref{lem12} $\|y_i,D\|=2$ for each $i \in [2]$ and each $D \in \mathcal C_3$. Let $N=N(y_{1})\cap\bigcup\mathcal{C}_{3}$ and $M=\bigcup\mathcal{C}{}_{3}\smallsetminus N$
(see Figure \ref{fig4}).
By Claim~\ref{lem11}, $T$ is independent. By Claim~\ref{lem14-1},
for every $y \in T$, $N(y) \cap \bigcup \mathcal C_3 \subseteq N$, so $E(M,T)=\emptyset$. Since $y_2 \neq y_0$, also $N(y_{2})\cap\bigcup\mathcal{C}_{3}=N$. 
\begin{figure}[ht]
 \begin{tikzpicture}[scale=0.6]
 \draw(0,0) node[vertex] (y1){} ;
 \draw(0.3,-.75) node {$y_1$};
 \draw(1,0) node[vertex] (y2){} ;
 \draw(1,-.5) node {$y_2$};
 \foreach \x in {1,...,4}
 \draw(1+\x,0) node[vertex] (t\x){} ;
 \draw(6.5,-1.5) node[label=$T$] (T){} ;
 \draw(2.5,-.3) ellipse (3cm and 1.2 cm);
 \draw(0.5,2.5) node[vertex,label=$x$,draw] (x'){} ;
 \foreach \x in {1,...,4}
 \draw(.5+\x,2.5) node[vertex] (s\x){} ;
 \draw(6.5,2.5) node[label=$S$] (S){} ;
 \draw(2.5,2.7) ellipse (3cm and 1 cm);
 \foreach \x in {1,...,4}
 	{\draw (s\x) -- (t\x);}
\draw (s1) -- (t2);
\draw (t1) -- (s2);
\draw (s3) -- (t4);
\draw (t3) -- (s4);
 \draw (y1) -- (x') -- (y2);
 \foreach \x in {1,2}
	 {\draw(-2.7*\x,2.05) node[vertex] (u\x){};
	 \draw(-2.7*\x-.7,0.2) node[vertex] (l\x){};
	 \draw(-2.7*\x+.7,0.2) node[vertex] (r\x) {}; }
 \foreach \x in {1,2}
	 { \draw (u\x) -- (l\x) -- (r\x) -- (u\x);} 
  \foreach \x in {1,2}
	  {\draw (l\x) to[out=-20, in=190] (y1) to[out=190, in=-20] (r\x);
 	\draw (l\x) to[out=-30, in=200] (y2) to[out=200, in=-30] (r\x);  }
  \draw(-4,0) ellipse (3.25 cm and 1 cm);
  \draw(-4,2.05) ellipse (3 cm and .5 cm);  
  \draw (-7.25,-.75) node {$N$};
  \draw (-7.25,2.75) node {$M$};
 \end{tikzpicture}\caption{}\label{fig4}
\end{figure}

\begin{claim}
\label{lemMM}$M$ is independent.\end{claim}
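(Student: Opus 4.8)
The plan is to assume $M$ is not independent and to derive a contradiction by producing $k$ disjoint cycles in $G$. So suppose $z_1z_2\in E$ with $z_i\in M\cap D_i$, $D_i\in\mathcal C_3$. Since $\|y_1,D\|=2$ for every $D\in\mathcal C_3$ (Claim~\ref{lem12}), each such $D$ meets $M$ in exactly one vertex, so $D_1\ne D_2$; write $D_i=z_ip_iq_i$. Because $z_i\notin N$, Claim~\ref{lem12} forces $\|y_1,D_i\|=\|y_2,D_i\|=2$, whence $p_i,q_i\in N\subseteq N(y_1)\cap N(y_2)$ and $p_iq_i\in E$. Throughout I would use the facts already in hand: $y_1,y_2$ are high with $N(y_1)\cap\bigcup\mathcal C_3=N(y_2)\cap\bigcup\mathcal C_3=N$; $E(M,T)=\emptyset$; and, by Claim~\ref{lem11}, $H$ is complete bipartite with parts $S,T$ except for the single possible nonedge $x_0y_0$, which I avoid by taking $x=x_0$.

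The main construction replaces the two triangles $D_1,D_2$ by $y_1p_1q_1$ and $y_2p_2q_2$ (legal since $p_i,q_i\in N(y_j)$ and $p_iq_i\in E$); this keeps $t_3$ triangles, frees $z_1,z_2$ (still joined by $z_1z_2$), and deletes $y_1,y_2$ from $H$. The residue $H-y_1-y_2$ is complete bipartite with parts $S$ and $T-y_1-y_2$ of sizes $2t_4+1$ and $2t_4$, so for any prescribed $s\in S$ it decomposes into $t_4$ disjoint $4$-cycles plus the single leftover vertex $s$. Hence if some $s\in S$ is adjacent to both $z_1$ and $z_2$, then $z_1z_2s$ is a triangle, and together with the $t_3$ triangles and these $t_4$ four-cycles we obtain $t_3+1+t_4=k$ disjoint cycles covering all of $V(G)$, contradicting that $G$ has no $k$ disjoint cycles.

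Thus everything reduces to folding $z_1,z_2$ into the packing; a common neighbor $s\in S$ of $z_1,z_2$ is the cleanest sufficient condition, and failing that I would exploit the degree information. Since $z_i\not\sim y_1$ and $d(y_1)=2k-1$, hypothesis~\ref{H2} gives $d(z_i)\ge 2k-2$, while $E(M,T)=\emptyset$ confines the $H$-neighbors of $z_i$ to $S$. Freeing only $D_1$ (replacing it by $y_1p_1q_1$ and rebalancing $H-y_1\cong K_{2t_4+1,2t_4+1}$ into $t_4$ four-cycles plus an edge $s^*t^*$) makes $\{z_1,s^*,t^*\}$ an optimal remainder; whenever $z_1$ has a neighbor in $S$ this remainder is a $\mathcal C_3$-useful $3$-path with endpoint $z_1$, so Lemma~\ref{lem6} forces $\|z_1,D_2\|\ge2$, and since $z_1\sim z_2$ this yields $z_1\sim p_2$ or $z_1\sim q_2$, hence a triangle on $\{z_1,z_2,p_2\}$; combined with $y_1p_1q_1$, the untouched triangles, and a $4$-cycle packing of the rest of $H$, this again drives the count to $k$ disjoint cycles. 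The main obstacle I anticipate is precisely this bookkeeping: guaranteeing that the freed vertex ($q_2$, or the leftover $S$-vertex) packs with the nearly-balanced residue of $H$ into exactly the right number of cycles. That step consumes the $t_4$-vertex slack and requires careful attention to the one possibly-missing edge $x_0y_0$ and to the parity of the two sides of the bipartition, and it is where the case in which $z_1,z_2$ have no common (or no) $S$-neighbor has to be resolved by a slightly longer cycle through the bipartite part.
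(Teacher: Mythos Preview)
Your overall strategy---find a common neighbor $s\in S$ of $z_1,z_2$, form the triangle $z_1z_2s$, and repack the rest---is exactly the paper's final step. The gap is in establishing that such a common neighbor exists. Your degree count $d(z_i)\ge 2k-2$ together with $E(M,T)=\emptyset$ only yields
\[
\|z_i,S\|\;\ge\;(2k-2)-2-3(t_3-1)=2t_4-t_3+1,
\]
which is useless once $t_3>2t_4$; in particular it does not force $\|z_i,S\|>0$, so your fallback ``whenever $z_1$ has a neighbour in $S$'' is exactly the case you cannot guarantee. Even if $z_1$ does have such a neighbour, your application of Lemma~\ref{lem6} to the path $z_1s^*t^*$ gives only $\|\{z_1,t^*\},D_2\|\ge 3$, hence $\|z_1,D_2\|\ge 1$ when $D_2$ happens to be the exceptional cycle $C^*$---not the $\ge 2$ you claim.

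What you are missing is the paper's intermediate step: one first proves that $\|z,S\|>t_4$ for \emph{every} $z\in M$. Pigeonhole on $|S|=2t_4+1$ then immediately gives the common neighbour. The proof of this bound is the real content of the claim: if $\|z,S\|\le t_4$, then (using $zy_1\notin E$, $\|y_1,S\|=2t_4+1$, and $\|\{z,y_1\},T\|=0$)
\[
\|\{y_1,z\},\mathcal C_3\|\ge (4k-3)-(3t_4+1)=4t_3+t_4>4t_3,
\]
so some $D'=z'z_1'z_2'\in\mathcal C_3$ has $\|\{y_1,z\},D'\|\ge 5$, whence $\|z,D'\|=3$. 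Replacing $D,D'$ by $y_1(D-z)y_1$ and the triangle $zz'z_2'$ leaves $xy_2z_1'$ as a $\mathcal C_3$-useful path; but $z_1'$ now has three neighbours in the triangle $zz'z_2'$, contradicting Claim~\ref{lem12}. This is the key lemma you need to insert before your ``main construction''; the rest of your write-up then goes through essentially as the paper's.
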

\begin{proof}
First, we show 
\begin{equation}\label{star_p_19}
\mbox{$\|z,S\|>t_4$ for all  $z\in M$.}
\end{equation}
 If not then there exists $z\in D\in\mathcal{C}_{3}$ with $\left\Vert z,S\right\Vert \leq t_{4}$.
Since $\left\Vert M,T\right\Vert =\|T,T\|=0$,
\[
\left\Vert \{y_{1},z\},\mathcal{C}_{3}\right\Vert \geq4k-3-\left\Vert \{z,y_{1}\},S\right\Vert \ge 4(t_{4}+t_{3}+1)-3-(2t_{4}+1+t_4)=t_{4}+4t_{3}>4t_{3}.
\]
Then there is $D'=z'z'_{1}z'_{2}z'\in\mathcal{C}_{3}$ with $\left\Vert \{z,y_{1}\},D'\right\Vert \geq5$
and $z'\in M$. As
$\|y_1,D\|=2$, 
 $\left\Vert z,D'\right\Vert =3$. Since $D^{*}:=zz'z'_{2}z$ 
is a cycle, $xy_{2}z'_{1}$ is $D^{*}$-useful.
As $\left\Vert z'_{1},D^{*}\right\Vert =3$, this contradicts Claim~\ref{lem12}, proving \eqref{star_p_19}.

Suppose $zz'\in E(M)$; say $z\in D\in\mathcal{C}_{3}$ and $z'\in D'\in\mathcal{C}_{3}$.
By \eqref{star_p_19}, there is $u\in N(z)\cap N(z')\cap S$. Then $zz'uz$, $y_{1}(D-z)y_{1}$ and
$y_{2}(D'-z')y_{2}$ are disjoint cycles, contrary to \ref{o1}.
\end{proof}

By Claims~\ref{lem11} and \ref{lemMM},  $M$ and $T$ are independent;  as remarked  above  $E(M,T)=\emptyset$. Then $M\cup T$ is independent. This contradicts (H3), since 
\[
|G|-2k+1=3t_{3}+4t_{4}+3-2(t_{3}+t_{4}+1)+1=t_{3}+2t_{4}+2=|M\cup T|\leq\alpha(G).
\]
 The proof of  Theorem~\ref{main} is now complete.
\end{proof}

\section{The case $k=2$}
Lov\'asz \cite{Lo} observed that any (simple or multi-) graph can be transformed into a multigraph with minimum degree at least $3$, without affecting the maximum number of disjoint cycles in the graph, by using a sequence of operations of the following three types: (i) deleting a bud; (ii) suppressing a vertex $v$ of degree $2$ that has two neighbors $x$ and $y$, i.e., deleting $v$ and adding a new (possibly parallel) edge between $x$  and $y$; and (iii) increasing the multiplicity of a loop or edge with multiplicity $2$. Here loops and two parallel edges are considered cycles, so forests have neither. Also $K_s$ and $K_{s,t}$ denote simple graphs. Let $W^*_s$ denote a wheel on $s$ vertices whose spokes, but not outer cycle edges, may be multiple. The following theorem characterizes those multigraphs that do not have two disjoint cycles.
\begin{thm}[Lov\'asz~\cite{Lo}]\label{lovasz}
Let $G$ be a multigraph  with $\delta(G) \geq 3$ and no two disjoint cycles. 
Then $G$ is one of the following: $(1)$ $K_5$, $(2)$ $W^*_s$,  $(3)$ 
$K_{3,|G|-3}$ together with a multigraph on the vertices of the (first) 3-class, and $(4)$ 
a forest $F$ and a vertex $x$ with possibly some loops at $x$ and some   
 edges linking $x$ to $F$.
\end{thm}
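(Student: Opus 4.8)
The plan is to induct on $|G|+\|G\|$, exploiting one crucial constraint: since $G$ has no two disjoint cycles, \emph{for every cycle $C$ the submultigraph $G-V(C)$ is a forest}. In particular every component of $G$ must contain a cycle (a forest component would have a vertex of degree at most $1$, violating $\delta(G)\ge 3$, bearing in mind that loops and parallel edges count as cycles), so $G$ is connected---otherwise two components would supply two disjoint cycles. I would organize the whole argument around the connectivity of $G$, treating the $3$-connected multigraphs as the kernel of the classification and reducing every other case to it. Throughout I would freely use the three reduction operations described just before the statement (deleting buds, suppressing degree-$2$ vertices, and increasing multiplicities of multiple edges or loops), since they do not change the maximum number of disjoint cycles and let me restore $\delta\ge 3$ after any reduction.

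First I would dispose of the low-connectivity cases. If $x$ is a cutvertex, then two distinct components of $G-x$ cannot both contain a cycle, so at most one component $D_0$ meets a cycle and every other component is a forest attached to $G$ only through $x$. Since such a component has $\delta\ge 3$ but is joined to the rest of $G$ solely through $x$, its leaves must send several edges to $x$; collecting all these components yields exactly the ``forest $F$ plus apex $x$'' of conclusion~(4), with loops at $x$ accounting for the reductions. If $D_0$ is empty we are finished; otherwise I would recurse on $G[D_0\cup\{x\}]$ and argue that the recovered structure is compatible with the forest only when the whole graph is again of type~(4). A $2$-separation $\{x,y\}$ is handled similarly but more delicately: I would show that at most one side of the separation contains a cycle avoiding $\{x,y\}$, and that collapsing the cycle-free side into the bundle $xy$ strictly reduces the graph without creating or destroying pairs of disjoint cycles, so that induction applies and only types~(3) and~(4) can emerge.

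The heart of the argument, and the step I expect to be the main obstacle, is the $3$-connected case: I would prove that a $3$-connected multigraph with $\delta\ge 3$ and no two disjoint cycles is $K_5$, a wheel $W^*_s$, or the type-(3) graph $K_{3,|G|-3}$ carrying an arbitrary multigraph on its $3$-class. To do this I would fix a shortest cycle $C$, so that $F:=G-V(C)$ is a forest, and then use Menger's theorem: $3$-connectivity provides three internally disjoint $C$--$F$ paths, while the forest constraint forbids two vertices of $F$ from each sending two independent edges into $C$ (that would assemble two disjoint cycles inside $C\cup F$). This tension forces $F$ to be extremely small---essentially a single vertex or an independent set---after which a direct analysis of how the remaining vertices attach to $C$, repeatedly using minimality of $C$ (no chord may create a cycle disjoint from a suitably shortened $C$), pins the graph down to one of the three $3$-connected types. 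Executing this attachment analysis cleanly, and incorporating the multigraph refinements (parallel spokes of the wheel, multiple edges inside the $3$-class), is where the genuine work lies.

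Finally I would reassemble the pieces. The $3$-connected kernels yield conclusions~(1)--(3) directly; the cutvertex and $2$-cut reductions yield~(4) together with the low-connectivity instances of~(3); and because the reduction operations are reversible, reinserting the suppressed degree-$2$ vertices and the deleted buds keeps the graph within the stated families, which completes the classification.
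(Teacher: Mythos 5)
First, a point of order: the paper never proves this statement---it is quoted as Lov\'asz's theorem and used as a black box (the paper's own contribution starts afterwards, with Lemma~\ref{LoL})---so your proposal can only be judged on its own merits, not against an internal proof. On the merits, your skeleton (connectivity case analysis; a shortest cycle $C$ with $G-V(C)$ a forest) is a sensible starting point, but the heart of your argument rests on a false claim. You assert that the forest constraint ``forbids two vertices of $F$ from each sending two independent edges into $C$,'' and you deduce that $F$ is ``essentially a single vertex or an independent set.'' Both statements are contradicted by the very graphs the theorem must produce. Two such attachments yield disjoint cycles only when the two attachment pairs can be separated along $C$ (neither sharing a vertex of $C$ nor interleaving); if they share a vertex or cross, no disjoint pair arises. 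Concretely, in $K_{3,|G|-3}$ (type (3)) with $C$ a $4$-cycle, every vertex of the large class outside $C$ sends two edges into $C$, yet there are no two disjoint cycles (the local picture is a subgraph of $K_{3,3}$); and in a wheel $W^*_s$ with $C$ a triangle through the hub, $F$ is a path on $s-3$ vertices whose two ends each send two edges into $C$. So $F$ need not be small or independent---it is a long path for wheels and contains almost the whole large class for type (3)---and the ``tension'' you describe would wrongly eliminate exactly the extremal structures. The real work in the $3$-connected case is to analyze how a \emph{large} forest $F$ can attach to $C$ (everything funneling through one hub vertex, or through three fixed vertices), and your sketch does not contain that analysis.

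The low-connectivity cases are also not closed. For a cutvertex $x$ you leave the component $D_0$ containing a cycle to an unspecified recursion; in fact a direct argument is available and is needed: if some component $D_0$ of $G-x$ contained a cycle $C_0$, then any other component $D_1$ satisfies $d_{G[D_1\cup\{x\}]}(v)\ge3$ for all $v\in D_1$, so $G[D_1\cup\{x\}]$ contains a cycle, which avoids $D_0$ and hence is disjoint from $C_0$, a contradiction; thus $G-x$ is a forest and $G$ is of type (4) outright. As written, your recursion would still have to exclude, say, $K_5$ with a pendant tree at $x$, which requires exactly this argument. Similarly, the $2$-cut step (``collapse the cycle-free side into the bundle $xy$'') is asserted without verifying that $\delta\ge3$ can be restored after collapsing, that the absence of two disjoint cycles is preserved in both directions, or---most importantly---that re-expanding the collapsed side inside each of the four outcome types lands back in the stated list. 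These reassembly arguments, together with a correct treatment of large $F$ in the $3$-connected case, constitute the actual content of Lov\'asz's theorem; as it stands, the proposal is a plan whose central mechanism fails.
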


Let $\mathcal G$ be the class of simple graphs $G$ with $|G|\ge6$ and $\sigma_2(G)\geq 5$ that do not have two disjoint cycles. Fix $G\in\mathcal G$. A  vertex in $G$ is low if its degree is at most $2$.  The low vertices form a clique $Q$ of size at most $2$---if $|Q|=3$, then $Q$ is a component-cycle, and $G-Q$ has another cycle. By Lov\'asz's observation, $G$ can be reduced to a graph $H$ of type (1--4). Reversing this reduction, $G$ can be obtained from $H$ by adding buds and subdividing edges. 
Let $Q':=V(G)\smallsetminus V(H)$. It follows that  $Q\subseteq Q'$. If $Q' \neq Q$, then
$Q$ consists of a single leaf in $G$ with a neighbor of degree 3, so $G$ is obtained from $H$ by subdividing an edge and adding a leaf to the degree-2 vertex.
If $Q'=Q$, then $Q$ is a component of $G$, or $G=H+Q+e$ for some  edge $e\in E(H,Q)$, or at least one vertex of $Q$ subdivides an edge $e\in E(H)$. In the last case, when $|Q|=2$, $e$ is subdivided twice by $Q$. As $G$ is simple, $H$ has at most one multiple edge, and its multiplicity is at most $2$.

In case (4), because $\delta(H) \geq 3$,  either $F$ has at least two buds, each linked to $x$ by multiple edges, or $F$ has one bud linked to $x$ by an edge of multiplicity at least $3$. This case cannot arise from $G$. Also, $\delta(H)=3$, unless $H=K_5$, in which case $\delta(H)=4$. Then $Q$ is not an isolated vertex, lest deleting $Q$ leave $H$ with $\delta(H) \geq 5>4$; and if $Q$ has a vertex of degree $1$ then $H=K_5$. Else all vertices of $Q$ have degree $2$, and $Q$ consists of the subdivision vertices of one edge of $H$. 
We have the following lemma.

\begin{lem}\label{LoL}
Let $G$ be a graph with $|G|\geq 6$ and $\sigma_2(G) \geq 5$ that does not have  two disjoint cycles.
 Then $G$ is one of the following (see Figure~\ref{fig:LoL}):
\begin{enumerate}[label=(\alph*)]
\item $K_5+ K_2$;
\item $K_5$ with a  pendant edge, possibly subdivided;
\item $K_5$ with one edge subdivided and then a leaf added adjacent to the degree-2 vertex;
\item a graph $H$ of type (1--3) with no multiple edge, and possibly one edge subdivided once or twice, and if $|H|=6-i$ with $i\ge1$ then some edge is subdivided at least $i$ times; 
\item a graph $H$ of type (2) or (3) with one edge of multiplicity two, and one of its parallel parts is subdivided once or twice---twice if $|H|=4$.
\end{enumerate}
\end{lem}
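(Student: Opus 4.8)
The plan is to leverage the structural reduction already carried out in the paragraphs preceding the lemma, so that only organization and verification remain. Every $G \in \mathcal{G}$ reduces, via Lov\'asz's three operations, to a Lov\'asz-type multigraph $H$, and $G$ is recovered from $H$ by subdividing edges and attaching buds; moreover the low vertices of $G$ form a clique $Q$ of size at most $2$, with $Q \subseteq Q' := V(G) \setminus V(H)$, type (4) cannot occur, and $\delta(H) = 3$ unless $H = K_5$. The remaining task is to translate these constraints into the five explicit families (a)--(e), which I would do by casework on $H$.

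First I would record the global restrictions forced by the clique condition on $Q$. Since any two low vertices must be adjacent and a subdivision vertex has degree $2$, at most one edge of $H$ may be subdivided, and at most twice (two subdivision vertices then share a single edge and so are adjacent). The same argument bounds multiplicities: turning a parallel part into a simple path requires a degree-$2$ vertex, so $H$ can carry at most one multiple edge and of multiplicity at most $2$, since two subdivision vertices coming from distinct parallel copies or distinct edges would be nonadjacent and low. These remarks, with $\delta(H) = 3$ (unless $H = K_5$) and the exclusion of type (4), determine exactly which reinflations of $H$ are admissible.

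Next I would split on whether $H = K_5$ or $\delta(H) = 3$. For $H = K_5$, the $Q$-versus-$Q'$ dichotomy gives three attachment patterns: $Q$ is a separate $K_2$-component (yielding (a)), $Q$ sits on a pendant edge that is subdivided at most once (yielding (b)), or $Q$ is a single bud hung on the degree-$3$ subdivision vertex of a subdivided edge (yielding (c)); in each subcase I would confirm $\sigma_2(G) \geq 5$ and the absence of two disjoint cycles. For $\delta(H) = 3$, the graph $H$ is a wheel $W^*_s$ or a $K_{3,|H|-3}$-type graph, and the only freedom is whether $H$ is simple, landing in (d), or carries its single permitted double edge, landing in (e), with at most one subdivision in the simple case and the forced resolution of the parallel part in the double-edge case.

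The main obstacle will be the small-order bookkeeping imposed by $|G| \geq 6$. When $|H| < 6$ the lemma demands a minimum number of subdivisions (the ``$|H| = 6 - i$ with $i \geq 1$ implies some edge subdivided at least $i$ times'' clause of (d), and the ``twice if $|H| = 4$'' clause of (e)), and I must check that these are precisely what is forced by simultaneously achieving $|G| \geq 6$, keeping $G$ simple, creating no second disjoint cycle, and preserving the bound $|Q| \leq 2$ on the clique of low vertices. Verifying that no configuration outside (a)--(e) survives all four requirements --- for instance, that one cannot subdivide on two different edges or attach buds in two places --- is where the delicate but routine degree and adjacency accounting resides.
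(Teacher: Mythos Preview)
Your plan is correct and mirrors the paper's own argument: the paragraphs preceding the lemma \emph{are} its proof, and what remains is exactly the sorting of the $Q$-versus-$Q'$ possibilities into families (a)--(e). There is one small gap in your casework, however. When $H=K_5$ you enumerate only three attachment patterns, but a fourth survives: $Q'=Q$ may consist solely of one or two subdivision vertices on a single edge of $K_5$, with no bud and no separate component, producing $K_5$ with an edge subdivided once or twice---and this lands in case (d), not (a)--(c). The cleaner dichotomy, and the one the paper actually uses, is not ``$H=K_5$ versus $\delta(H)=3$'' but rather ``$Q$ contains a vertex of degree at most $1$'' (which forces $H=K_5$ via the $\sigma_2$ bound and yields (a)--(c)) versus ``every vertex of $Q$ has degree $2$'' (so $Q$ is the set of subdivision vertices of a single edge of $H$, with $H$ of any type (1)--(3), yielding (d) or (e)). With that adjustment your outline matches the paper exactly.
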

\begin{figure}[ht]
\begin{center}
\begin{subfigure}[b]{0.2\textwidth}\centering
\begin{tikzpicture}[scale=0.3]
\foreach \y in {1,...,5}
	{\draw(-10,0) +(18+72*\y: 3 cm)  node[vertex] (k\y){} ;}
\foreach \y in {1,...,5}
\foreach \x in {1,...,5}
	\draw (k\x) -- (k\y);
\draw(-14,-2.4)node[vertex] (a){} ;
\draw(-14,.95)node[vertex] (b){} ;
\draw (a)--(b);
\end{tikzpicture}
\caption{}
\end{subfigure}\hspace{.75cm}\centering
\begin{subfigure}[b]{0.3\textwidth}
\begin{tikzpicture}[scale=0.3]
\foreach \y in {1,...,5}
	{\draw(-8.5,0) +(18+72*\y: 3 cm)  node[vertex] (k\y){} ;}
\foreach \y in {1,...,5}
\foreach \x in {1,...,5}
\draw (k\x) -- (k\y);
\draw(-6,3)  node[vertex] (u2){} ;
\draw  (u2) -- (k1);
\foreach \y in {1,...,5}
	{\draw(0,0) +(18+72*\y: 3 cm)  node[vertex] (k\y){} ;	}
\foreach \y in {1,...,5}
\foreach \x in {1,...,5}
\draw (k\x) -- (k\y);
\draw(2.5,3)  node[vertex] (u1){} ;
\draw(5,3)  node[vertex] (v1){} ;
\draw (v1) -- (u1) -- (k1);
\end{tikzpicture}
\caption{}
\end{subfigure}\hspace{1cm}
\begin{subfigure}[b]{0.2\textwidth}
\begin{tikzpicture}[scale=0.3]
\foreach \y in {1,...,3}
	{\draw(10,-4+2*\y)  node[vertex] (q\y){} ;}
\draw (14,-1) node[vertex] (p1){} ;
\draw (14,1) node[vertex] (p2){} ;
\draw (18,0) node[vertex] (u){} ;
\draw (21,0) node[vertex] (v){} ;
\draw (v) -- (u);
\foreach \x in {1,2}{
	 \draw (p\x) -- (u);
	 \foreach \y in {1,...,3} \draw (p\x) -- (q\y);
	 }
\draw (q1) -- (q2) -- (q3);
\draw (q1) to[out=180, in=180] (q3);
\end{tikzpicture}
\caption{}
\end{subfigure}
\vspace{.75cm}

\begin{subfigure}[b]{.8\textwidth}
\begin{tikzpicture}[scale=0.3]
\foreach \y in {1,...,5}
{
\draw(-5,-.75) +(18+72*\y: 3 cm)  node[vertex] (k\y){} ;
	}
\foreach \y in {1,...,5}
\foreach \x in {1,...,5}
\draw (k\x) -- (k\y);
\draw(-4,1.5) node[vertex] {} ;
\draw(-3.,.85) node[vertex] {} ;

\draw(4,-.75)  node[vertex] (mid){};
\foreach \y in {1,...,5}
{
\draw(4,-.75) +(18+72*\y: 3 cm)  node[vertex] (k\y){} ;
\draw (mid) -- (k\y);
	}
\draw (4,-.75) circle (3cm);
\draw(4,1.25) node[vertex] {} ;
\draw(4,.25)  node[vertex] {} ;
\draw(13,-.75)  node[vertex] (mid){};
\foreach \y in {1,...,5}
	{\draw(13,-.75) +(18+72*\y: 3 cm)  node[vertex] (k\y){} ;
	\draw (mid) -- (k\y);}
\draw (13,-.75) circle (3cm);
\draw(13,-.75) +(18+72*.33: 3 cm)  node[vertex] {} ;
\draw(13,-.75) +(18+72*.66: 3 cm)  node[vertex] {} ;
\draw(20,-3)  node[vertex] (x) {} ;
\draw(22,-3)  node[vertex] (y) {} ;
\draw(24,-3)  node[vertex] (z) {} ;
\draw[thick, densely dotted] (x) to[out=-45, in=-135] (z);
\draw[thick, densely dotted] (x)--(y) -- (z);
\draw(20,1.5)  node[vertex] (x') {} ;
\draw(22,1.5)  node[vertex] (z') {} ;
\draw(24,1.5)  node[vertex] (y') {} ;
\draw (x)--(x')--(y)--(y')--(z)--(z')--(y) (y')--(x)--(z') (x')--(z);
\draw(20,-1.5)  node[vertex]{} ;
\draw(20,0)  node[vertex]{} ;

\draw(28,-3)  node[vertex] (f) {} ;
\draw(30,-3)  node[vertex] (g) {} ;
\draw(32,-3)  node[vertex] (h) {} ;
\draw(28,1.5)  node[vertex] (f') {};
\draw(30,1.5)  node[vertex] (g') {};
\draw(32,1.5)  node[vertex] (h') {};
\draw (f)--(f')--(g)--(g')--(h)--(f') (g)--(f)--(g');
\draw (f)--(h')--(g) (h)--(h');
\draw(28.66,-3)  node[vertex]{} ;
\draw(29.33,-3)  node[vertex]{} ;
\draw[thick, densely dotted] (f) to[out=-45, in=-135] (h);
\draw[thick, densely dotted] (g) -- (h);
\end{tikzpicture}
\caption{}
\end{subfigure}

\vspace{.75cm}

\begin{subfigure}[b]{.6\textwidth}
\begin{tikzpicture}[scale=0.3]

\draw(5,-.75)  node[vertex] (mid){};
\foreach \y in {1,...,5}
{
\draw(5,-.75) +(18+72*\y: 3 cm)  node[vertex] (k\y){} ;
\draw (mid) -- (k\y);
	}
\draw (5,-.75) circle (3cm);
\draw(5,.25)  node[vertex]{};
\draw(5,1.25)  node[vertex]{};
\draw (5,-.75) to[out=30, in=-30] (5,2.25);

\draw(13,-.75)  node[vertex] (mid){};
\foreach \y in {1,...,5}
{
\draw(13,-.75) +(18+72*\y: 3 cm)  node[vertex] (k\y){} ;
\draw (mid) -- (k\y);
	}
\draw (13,-.75) circle (3cm);
\draw(13,-.75) +(18+72*.33: 3 cm)  node[vertex] {} ;
\draw(13,-.75) +(18+72*.66: 3 cm)  node[vertex] {} ;
\draw (k1)--(k5);

\draw(20,-3)  node[vertex] (x) {} ;
\draw(22,-3)  node[vertex] (y) {} ;
\draw(24,-3)  node[vertex] (z) {} ;
\draw(20,1.5)  node[vertex] (x') {} ;
\draw(22,1.5)  node[vertex] (z') {} ;
\draw(24,1.5)  node[vertex] (y') {} ;
\draw (x)--(x')--(y)--(y')--(z)--(z')--(y) (y')--(x)--(z') (x')--(z);
\draw(20,-1.5)  node[vertex]{} ;
\draw(20,0)  node[vertex]{} ;
\draw (x) to[out=135, in=-135] (x');

\draw(28,-3)  node[vertex] (f) {} ;
\draw(30,-3)  node[vertex] (g) {} ;
\draw(32,-3)  node[vertex] (h) {} ;
\draw(28,1.5)  node[vertex] (f') {};
\draw(30,1.5)  node[vertex] (g') {};
\draw(32,1.5)  node[vertex] (h') {};
\draw (f)--(f')--(g)--(g')--(h)--(f') (g)--(f)--(g');
\draw (f)--(h')--(g) (h)--(h');
\draw(28.66,-3)  node[vertex]{} ;
\draw(29.33,-3)  node[vertex]{} ;
\draw (f) to[out=-45, in=-135] (g);
\draw[thick, densely dotted] (f) to[out=-45, in=-135] (h);
\draw[thick, densely dotted] (g) -- (h);
\end{tikzpicture}\caption{}
\end{subfigure}
\end{center}
\caption{Theorem~\ref{LoL}}\label{fig:LoL}
\end{figure}

\section{Connections to Equitable Coloring\label{EqCol}}

 A proper vertex coloring of a graph $G$ is \emph{equitable} if any
two color classes differ in size by at most one. 
 In 1970
Hajnal and Szemer\' edi proved:
\begin{thm}[\cite{HSz}]
 \label{hst} 
  Every graph $G$ with $\Delta(G)+1\leq k$
has an equitable $k$-coloring. 
\end{thm}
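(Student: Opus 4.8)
The statement is the celebrated Hajnal--Szemer\'edi theorem (note $\Delta(G)+1\le k$ is the same as $\Delta(G)\le k-1$), so I would aim for the edge--induction and recolouring proof. First I would normalize the problem: by adding isolated vertices I may assume $|G|=sk$, so that an equitable $k$-colouring is precisely a partition of $V(G)$ into $k$ independent sets, each of size exactly $s$. I would then induct on $\|G\|$. Choosing any edge $xy$, the induction hypothesis gives $G-xy$ an equitable $k$-colouring $V_1,\dots,V_k$ with all $|V_i|=s$. If $x$ and $y$ land in different classes this already colours $G$, so I may assume $x,y\in V_1$. Since $d_G(x)\le k-1$ and one neighbour of $x$ (namely $y$) already sits in $V_1$, at most $k-2$ neighbours of $x$ lie in $V_2\cup\dots\cup V_k$, so some class $V_i$ with $i\ge2$ contains no neighbour of $x$; moving $x$ into $V_i$ yields a \emph{proper} colouring of $G$ whose class sizes are $(s-1,s,\dots,s,s+1)$. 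Thus everything reduces to a main lemma: \emph{if $\Delta(G)\le k-1$ and $G$ has a proper $k$-colouring with one class of size $s+1$, one of size $s-1$, and the rest of size $s$, then $G$ has an equitable $k$-colouring.}

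To prove this lemma I would use augmenting chains of ``class moves.'' Relabel so $|V_1|=s+1$ and $|V_k|=s-1$. Call a vertex $w\in V_i$ \emph{movable to} $V_j$ if $w$ has no neighbour in $V_j$, and form the auxiliary digraph $D$ on $\{V_1,\dots,V_k\}$ with an arc $V_i\to V_j$ whenever some $w\in V_i$ is movable to $V_j$. The key observation is that a directed path $V_1=W_0\to W_1\to\dots\to W_t=V_k$ in $D$ lets me transfer the surplus: successively moving a witnessing vertex from $W_j$ into $W_{j+1}$ is conflict-free by the definition of movability, the witnesses lie in distinct classes so no two interfere, and the net effect is to lower $|V_1|$ by one and raise $|V_k|$ by one, producing an equitable colouring. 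Hence it suffices to show that $V_k$ is reachable from $V_1$ in $D$.

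The entire difficulty is concentrated in the case where $V_k$ is \emph{not} reachable. Let $\mathcal A$ be the set of classes reachable from $V_1$ in $D$; then $V_k\notin\mathcal A$ and no arc leaves $\mathcal A$, which forces the rigid structural condition that \emph{every} vertex of \emph{every} class in $\mathcal A$ has a neighbour in \emph{every} class outside $\mathcal A$. My plan would be to contradict $\Delta(G)\le k-1$ from this, but a naive bound (each vertex of $\mathcal A$ having at least $|\{V_j\}_{j}\setminus\mathcal A|$ external neighbours) is not by itself sufficient. The essential manoeuvre, which I expect to be the main obstacle, is that one must also \emph{modify} the colouring: by swapping a carefully chosen neighbour of the surplus vertex between classes, one enlarges $\mathcal A$ (or decreases a suitable potential), and only after iterating this does a discharging argument---assigning to the classes of $\mathcal A$ charges reflecting the surplus and redistributing them across the adjacencies forced above---yield a vertex of degree at least $k$, the desired contradiction. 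This recolouring-plus-discharging step is exactly where the sharpness of $\Delta(G)=k-1$ is consumed, and it is the reason Theorem~\ref{hst} lies far deeper than the trivial greedy bound for ordinary colouring; controlling both the termination of the recolouring and the final degree count is the crux of the whole argument.
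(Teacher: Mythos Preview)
The paper does not contain a proof of this statement: Theorem~\ref{hst} is quoted as the Hajnal--Szemer\'edi theorem with a citation to~\cite{HSz}, and the paper only remarks that a shorter proof appears in~\cite{KKY} and a fast algorithm in~\cite{KKMS}. So there is no ``paper's own proof'' to compare against; the result is used as a black box in Section~\ref{EqCol}.

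As for your outline itself: the reduction to $|G|=sk$, the edge-deletion induction, and the passage to a nearly-equitable colouring with class sizes $(s-1,s,\dots,s,s+1)$ are all standard and correct, and your auxiliary digraph $D$ on the colour classes is exactly the device used in the Kierstead--Kostochka proof. However, you explicitly stop at the genuine difficulty. When $V_k$ is not reachable from $V_1$ in $D$, the structural observation that every vertex in a class of $\mathcal A$ has a neighbour in every class outside $\mathcal A$ is correct but, as you note, insufficient by itself. What you describe as ``swapping a carefully chosen neighbour of the surplus vertex between classes'' to enlarge $\mathcal A$ or decrease a potential is precisely the heart of the argument, and you have not supplied it: one needs a specific potential function (in~\cite{KKY} this involves counting \emph{solo} vertices in the accessible classes and the sizes of the reachable sets), a concrete recolouring rule that provably improves it, and a careful count showing that the terminal configuration forces a vertex of degree at least $k$. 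None of these ingredients is routine, and the discharging you allude to is not a generic redistribution but a tightly engineered double-count across the bipartition $\mathcal A$ versus its complement. Your proposal is an accurate roadmap, but it is a roadmap only; the portion you flag as ``the crux of the whole argument'' is indeed the entire content of the theorem, and it remains unwritten.
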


 For  
a shorter proof of Theorem~\ref{hst}, see~\cite{KKY};
for an $O(k|G|^{2})$-time algorithm see~\cite{KKMS}.

Motivated by Brooks' Theorem, it is natural to ask which graphs $G$
with $\Delta(G)=k$ have equitable $k$-colorings. Certainly such
graphs are $k$-colorable. Also, if $k$ is odd then $K_{k,k}$ has
no equitable $k$-coloring. Chen, Lih, and Wu~\cite{CLW} conjectured
(in a different form) that these are the only obstructions
to an equitable version of Brooks' Theorem:
\begin{conjecture}[\cite{CLW}]
\label{conjmain} If $G$ is a graph with  $\chi(G), \Delta(G)\le k$ and no equitable
$k$-coloring then $k$ is odd and  $K_{k,k}\subseteq G$.
\end{conjecture}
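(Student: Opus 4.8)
The plan is to run a minimal-counterexample argument and reinterpret equitable colorings as clique factors in the complement, so that the cycle machinery of this paper can be applied. First I would dispose of the trivial regime: if $\Delta(G)\le k-1$ then Theorem~\ref{hst} already produces an equitable $k$-coloring, so a counterexample must have $\Delta(G)=k$. Reducing further to $k\mid|G|$ --- where ``equitable'' forces all $k$ classes to have the same size $m:=|G|/k$ --- an equitable $k$-coloring of $G$ is precisely a partition of $V(\overline G)$ into $k$ cliques of size $m$, i.e.\ a $K_m$-factor of $\overline G$. Since $\Delta(G)=k$, we have $\delta(\overline G)\ge|G|-1-k=(m-1)k-1$, exactly one short of the Hajnal--Szemer\'edi threshold $(m-1)k$ that guarantees a $K_m$-factor. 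Thus the conjecture is equivalent to characterizing the graphs sitting one below this threshold that nonetheless fail to have a $K_m$-factor.

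The slice $m=3$ (equivalently $|G|=3k$) is exactly what Theorem~\ref{ch++} settles, and I would handle it first. A $K_3$-factor of $\overline G$ is a family of $k$ disjoint triangles; but $\overline G$ has only $3k$ vertices, so \emph{any} $k$ disjoint cycles in $\overline G$ must each have length $3$ and together span $\overline G$. Hence $G$ has an equitable $k$-coloring if and only if $\overline G$ has $k$ disjoint cycles. Now $\delta(\overline G)\ge2k-1$, and $\alpha(\overline G)=\omega(G)\le\chi(G)\le k=|\overline G|-2k$, so hypothesis (H3) of Theorem~\ref{ch++} holds automatically. Therefore $\overline G$ has its $k$ disjoint cycles \emph{unless} (H4) fails, i.e.\ unless $k$ is odd and $\overline G=2K_{k}\vee\overline{K_{k}}$. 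Complementing, $G=K_{k,k}\cup K_{k}$, so $k$ is odd and $K_{k,k}\subseteq G$, which is exactly the desired conclusion. (For odd $k\ge3$ only this clause of (H4) is relevant; the $k=2$ wheel clause concerns even $k$, and its complement has chromatic number $3>k$, so it is excluded by the hypothesis $\chi(G)\le k$.)

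For general $m$ I would attempt the same reduction, and this is where the genuine difficulty lies. The assertion ``$\delta\ge(m-1)k-1$ forces a $K_m$-factor unless a prescribed obstruction occurs'' is a true generalization of the disjoint-cycle problem solved here, but Theorem~\ref{main} supplies only the $m=3$ instance: the exceptional graphs $\mathbf Y_1,\mathbf Y_2$ and the whole optimal-collection analysis (that $R$ is a path, that $|R|=3$, and the Key Lemma) are tied to cycles and have no evident counterpart for cliques of size $\ge4$. The hard part will be designing the analogous local exchange moves for $K_m$-factors --- replacing a bounded number of cliques of an optimal near-factor whenever a structural claim fails --- since both the repertoire of useful moves and the list of extremal configurations grow rapidly with $m$. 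I expect this, rather than the reductions or the $m=3$ base case, to be the crux; it is precisely why the conjecture is still open in general, the present paper contributing the $m=3$ slice through Theorem~\ref{ch++}.
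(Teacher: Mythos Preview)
The statement you are attempting to prove is Conjecture~\ref{conjmain}, which the paper presents as an \emph{open conjecture}; there is no proof in the paper to compare against. The paper records partial progress (Theorem~\ref{k,s<5}, imported from \cite{KKJGT} and \cite{KK2}) showing the conjecture holds whenever $k\le4$ or $s:=|G|/k\le4$, and you correctly recognise that the general case would require new ideas beyond the cycle-exchange machinery developed here.

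Your handling of the $m=3$ slice, however, is circular. You invoke Theorem~\ref{ch++} to conclude that $\overline G$ has $k$ disjoint triangles unless (H4) fails, but examine how the paper actually \emph{proves} Theorem~\ref{ch++} in the boundary case $|G|=3k$: it passes to $\overline G$, uses Brooks' Theorem to get $\chi(\overline G)\le k$, and then appeals to Corollary~\ref{che+}---which is exactly the $s=3$ instance of the (strong form of the) conjecture, established in \cite{KK2} and quoted via Theorem~\ref{k,s<5}. Thus the paper's logical flow is (Conjecture for $s=3$) $\Rightarrow$ Corollary~\ref{che+} $\Rightarrow$ Theorem~\ref{ch++} at $|G|=3k$; your proposal runs the last arrow backwards. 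What you have verified is that the $|G|=3k$ case of Theorem~\ref{ch++} and the $s=3$ case of Conjecture~\ref{conjmain} are \emph{equivalent}, not that the present paper independently contributes the $m=3$ slice. The contribution runs in the opposite direction: the known equitable-coloring result for $s=3$ is an \emph{input} to Theorem~\ref{ch++}, not an output of it.
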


In \cite{CLW}, Chen, Lih, and Wu proved Conjecture~\ref{conjmain} holds for $k=3$.
By a simple trick, it suffices to prove the conjecture for graphs $G$ with $|G|=ks$.
Combining the results of the two papers  \cite{KKJGT} and
 \cite{KK2}, 
we have:

\begin{thm}
\label{k,s<5}  Suppose $G$ is a graph with $|G|=ks$. If $\chi(G), \Delta(G)\le k$
and $G$ has no 
equitable $k$-coloring, then
$k$ is odd and $K_{k,k}\subseteq G$ or both 
 $k\ge5$ \cite{KKJGT} and
 $s\ge5$ \cite{KK2}.
\end{thm}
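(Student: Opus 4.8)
The plan is to derive Theorem~\ref{k,s<5} as a bookkeeping combination of the two cited papers together with the classical base cases, rather than by any new argument. Fix $G$ with $|G|=ks$, $\chi(G)\le k$, $\Delta(G)\le k$, and no equitable $k$-coloring. The conclusion is a disjunction, so I would argue by assuming its first alternative fails: suppose it is \emph{not} the case that $k$ is odd and $K_{k,k}\subseteq G$. Under this assumption I must produce both lower bounds $k\ge5$ and $s\ge5$; each follows from one of the two papers, which between them resolve Conjecture~\ref{conjmain} in two complementary small-parameter regimes.

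First I would bound $k$. The paper \cite{KKJGT} settles Conjecture~\ref{conjmain} for $k\le4$: every graph with $\chi,\Delta\le k\le4$ that admits no equitable $k$-coloring has $k$ odd and contains $K_{k,k}$ (the cases $k\le2$ being trivial and $k=3$ being the theorem of Chen, Lih, and Wu \cite{CLW}). Since by assumption $G$ is not of this form, $k\le4$ is impossible, so $k\ge5$. Symmetrically, I would bound $s$ using \cite{KK2}, which settles Conjecture~\ref{conjmain} for the complementary parameter $s\le4$: for $|G|=ks$ with $s\le4$, the only obstruction is again $k$ odd with $K_{k,k}\subseteq G$. As this is excluded, $s\le4$ cannot occur, so $s\ge5$. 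Combining the two, $k\ge5$ and $s\ge5$, which is exactly the second alternative of the disjunction.

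Two points deserve attention, though neither is a genuine obstacle here. The reduction to $|G|=ks$ noted before the statement (the ``simple trick'' of padding the vertex set with independent vertices to make $|G|$ a multiple of $k$ without changing $\chi$, $\Delta$, or equitable colorability in the relevant way) is what lets both cited results be phrased for balanced color classes; since Theorem~\ref{k,s<5} already hypothesizes $|G|=ks$, it is invoked only inside the two papers. The one thing I would verify carefully is that the two results dovetail: \cite{KKJGT} covers $k\le4$ while \cite{KK2} covers $s\le4$, and on their overlap (small $k$ and small $s$) both yield the same trivial obstruction, so no case is left unhandled and the bounds do not conflict. The real difficulty of the statement is entirely contained in the proofs of \cite{KKJGT} and \cite{KK2}; in this paper the assertion is simply the logical union of those two theorems, so the ``main obstacle'' is purely one of citing the correct parameter ranges rather than of mathematical content.
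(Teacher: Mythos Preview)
Your proposal is correct and matches the paper exactly: the paper does not give a separate proof of Theorem~\ref{k,s<5} but merely introduces it with the sentence ``Combining the results of the two papers \cite{KKJGT} and \cite{KK2}, we have,'' with the citations placed inline in the statement to indicate which paper yields which bound. Your write-up makes explicit the only piece of logic involved---assume the first disjunct fails and read off $k\ge5$ from \cite{KKJGT} (together with \cite{CLW} for $k=3$ and the trivial $k\le2$ cases) and $s\ge5$ from \cite{KK2}---which is precisely the intended derivation.
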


A graph $G$  is $k$-\emph{equitable} if $|G|=ks$, $\chi(G)\le k$ and every proper
$k$-coloring of $G$ has $s$ vertices in each color class. The following
strengthening of Conjecture~\ref{conjmain}, if true, provides a characterization of
graphs $G$ with $\chi(G), \Delta(G)\le k$ that have an equitable $k$-coloring.   

\begin{conjecture}[\cite{KK-Strong}]\label{strong} 
Every graph $G$ with  $\chi(G), \Delta(G)\le k$ has no equitable $k$-coloring if
and only if $k$ is odd and $G=H+K_{k,k}$ for some $k$-equitable graph $H$.
\end{conjecture}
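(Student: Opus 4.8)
First I would record the elementary fact that $K_{k,k}$ has no equitable $k$-coloring when $k$ is odd: since each color class is independent and the two parts $A,B$ of size $k$ are completely joined, every class lies inside $A$ or inside $B$; an equitable $k$-coloring of the $2k$ vertices would have all classes of size $2$, forcing each part of size $k$ to split into pairs, which is impossible for odd $k$. Now suppose $k$ is odd and $G=H+K_{k,k}$ with $H$ a $k$-equitable graph, so $|H|=ks$ and $|G|=k(s+2)$. Any equitable $k$-coloring of $G$ must have all classes of size exactly $s+2$, because $k\mid|G|$ and class sizes differ by at most one. Its restriction to $H$ is a proper coloring with at most $k$ colors, so by $k$-equitability it has exactly $s$ vertices in each class; then the restriction to $K_{k,k}$ has exactly $(s+2)-s=2$ vertices in each class, i.e.\ it is an equitable $k$-coloring of $K_{k,k}$, contradicting the previous observation. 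Hence $G$ has no equitable $k$-coloring.

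\textbf{Plan for the ``only if'' direction: reduction to a component.} Suppose $\chi(G),\Delta(G)\le k$ and $G$ has no equitable $k$-coloring. The first step is to invoke Conjecture~\ref{conjmain} (known in the range of Theorem~\ref{k,s<5}) to conclude that $k$ is odd and that $G$ contains a copy $K$ of $K_{k,k}$, with parts $A,B$. The key structural move is then to use the degree bound: each $a\in A$ is already adjacent to all $k$ vertices of $B$, so $d_G(a)\ge k$, and since $\Delta(G)\le k$ we get $d_G(a)=k$ with $N(a)=B$. Therefore $A$ is independent and has no edges to $V(G)\setminus(A\cup B)$, and symmetrically for $B$. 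Thus $G[A\cup B]=K_{k,k}$ is a connected component of $G$, and $G=G'+K_{k,k}$ where $G':=G-(A\cup B)$ again satisfies $\chi(G'),\Delta(G')\le k$. What remains is to show that $G'$ is $k$-equitable, which is the new content of the strong form.

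\textbf{Plan for the ``only if'' direction: rigidity of $G'$, and the main obstacle.} Here I would argue the contrapositive: if $G'$ is \emph{not} $k$-equitable, then $G$ has an equitable $k$-coloring, a contradiction. Non-$k$-equitability means either $k\nmid|G'|$, or $|G'|=ks$ but some proper $k$-coloring of $G'$ has class sizes $n_1,\dots,n_k$ not all equal to $s$. In either case $G'$ admits a proper $k$-coloring with a controlled imbalance, and the goal is to color the component $K_{k,k}$ with class sizes $m_1,\dots,m_k$ so that the totals $n_i+m_i$ differ by at most one. The feasibility constraint is that $\{m_i\}$ be realizable inside $K_{k,k}$, i.e.\ split into two groups each summing to $k$ (one filling $A$, one filling $B$)---a subset-sum condition on the target sizes. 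The conceptual point is that the \emph{only} obstruction to equitably coloring $K_{k,k}$ is the parity clash ``odd $k$ cannot split a size-$k$ part into $2$'s,'' and this obstruction is precisely neutralized once $G'$ supplies an odd slack, i.e.\ once $G'$ fails to be rigid; the cleanest route is to choose the coloring of $G'$ so that some class has size $s\pm1$ and then use that odd slack to absorb the parity defect of $K_{k,k}$. I expect two genuine obstacles. The deep one is Conjecture~\ref{conjmain} itself, the Brooks-type input that is still open beyond the range of Theorem~\ref{k,s<5}; any unconditional proof of the present statement must settle it first. The secondary one is the subset-sum/exchange bookkeeping of this last step, where one must verify that a \emph{single} proper $k$-coloring of the non-rigid $G'$ has a size profile admitting a complementary, $K_{k,k}$-realizable, near-balanced assignment in all cases (including $k\nmid|G'|$), which likely requires an inductive or case-based analysis of the class-size distribution.
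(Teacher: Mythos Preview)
The statement you are attempting to prove is presented in the paper as an open \emph{conjecture}, not a theorem; the paper gives no proof of it and there is nothing to compare your attempt against. What the paper does state (citing \cite{KK-Strong}) is that Conjecture~\ref{strong} is \emph{equivalent} to Conjecture~\ref{conjmain}, and the latter is only known in the range of Theorem~\ref{k,s<5}. You have correctly located this: your ``if'' direction is a clean and correct argument, and in your ``only if'' direction you explicitly identify Conjecture~\ref{conjmain} as the deep input that remains open.

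Your reduction from Conjecture~\ref{conjmain} to Conjecture~\ref{strong}---isolating $K_{k,k}$ as a component via the degree cap, then arguing that non-rigidity of $G'$ lets one equitably color $G$---is the right shape, and is essentially what \cite{KK-Strong} proves. Two caveats. First, your ``secondary obstacle'' is real but not merely bookkeeping: showing that \emph{any} failure of $k$-equitability in $G'$ can be exploited requires more than choosing one coloring with a class of size $s\pm1$; one typically needs an iterative recoloring argument (e.g.\ first equitably color $G'$ via Theorem~\ref{hst} if possible, then perturb) rather than a single subset-sum match. Second, your case ``$k\nmid|G'|$'' never arises here: if $G=G'+K_{k,k}$ and one reduces (as the paper notes) to $k\mid|G|$, then automatically $k\mid|G'|$. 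So the only live case is $|G'|=ks$ with a non-balanced proper $k$-coloring, and that is where the recoloring work lies.
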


The next theorem collects results from~\cite{KK-Strong}. Together with
Theorem~\ref{k,s<5} it yields Corollary~\ref{che+}.
\begin{thm}[\cite{KK-Strong}]
Conjecture~\ref{conjmain} is equivalent to Conjecture~\ref{strong}. Indeed, for any $k_0$ and $n_0$, Conjecture~\ref{conjmain} holds for  $k \leq k_0$ and $|G| \leq n_0$ if and only if 
Conjecture~\ref{strong} holds for  $k \leq k_0$ and $|G| \leq n_0$.  
\end{thm}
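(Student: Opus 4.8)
The plan is to prove the two implications separately while tracking the parameters $k\le k_0$ and $|G|\le n_0$ throughout, so that the level-by-level refinement falls out automatically. The implication Conjecture~\ref{strong}$\Rightarrow$Conjecture~\ref{conjmain} is immediate and does not move the parameters: if $G$ has $\chi(G),\Delta(G)\le k\le k_0$ and $|G|\le n_0$ and no equitable $k$-coloring, then by Conjecture~\ref{strong} $k$ is odd and $G=H+K_{k,k}$ for some $k$-equitable $H$, so in particular $K_{k,k}\subseteq G$, which is exactly the conclusion of Conjecture~\ref{conjmain} for the very same $G$.

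For the reverse implication the entire content is a single lemma: \emph{if $k$ is odd and $\chi(H),\Delta(H)\le k$, then $G=H+K_{k,k}$ has no equitable $k$-coloring if and only if $H$ is $k$-equitable.} Granting this lemma, Conjecture~\ref{conjmain} gives Conjecture~\ref{strong}: if $G$ has $\chi,\Delta\le k\le k_0$, $|G|\le n_0$ and no equitable $k$-coloring, then Conjecture~\ref{conjmain} (applied to the same $G$) forces $k$ odd and $K_{k,k}\subseteq G$; since every vertex of this $K_{k,k}$ already has $k$ neighbors inside it and $\Delta(G)\le k$, it has no neighbor outside, so $K_{k,k}$ is a component and $G=H+K_{k,k}$ with $\chi(H),\Delta(H)\le k$, and the lemma then yields that $H$ is $k$-equitable. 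The ``if'' half of the lemma is an unconditional parity count: if $H$ is $k$-equitable with $|H|=ks$ and $G=H+K_{k,k}$ had an equitable coloring, then (as $k\mid|G|=k(s+2)$) each class would have $s+2$ vertices, and restricting to $H$ would use each color exactly $s$ times, hence each color exactly twice on $K_{k,k}$; but a color class is independent and so lies in one side of $K_{k,k}$, and writing $S_A,S_B$ for the disjoint color sets on the two sides we would get $|A|=2|S_A|$ and $|B|=2|S_B|$, forcing $|S_A|=|S_B|=k/2$, impossible for $k$ odd.

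The work is the ``only if'' half, which I would prove in contrapositive form, namely \emph{if $H$ is not $k$-equitable then $G=H+K_{k,k}$ has an equitable coloring}, by induction on $|G|$ inside the range $k\le k_0,\ |G|\le n_0$. If $H$ itself has no equitable $k$-coloring, the inductive hypothesis (Conjecture~\ref{strong} for the smaller graph $H$, still within the range) yields $H=H'+K_{k,k}$ with $H'$ $k$-equitable, so $G=H'+K_{k,k}+K_{k,k}$; here the two disjoint copies of $K_{k,k}$ can be colored so that their color-size vectors sum to the constant vector $(4,\dots,4)$, for instance $(3,1,2,\dots,2)$ against $(1,3,2,\dots,2)$, each realizable because $3+\tfrac{k-3}{2}\cdot 2=k$, and combining this with an equitable coloring of $H'$ gives one of $G$. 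In the remaining case $H$ has an equitable coloring but is not $k$-equitable, so either $k\nmid|H|$ or $H$ admits a proper coloring with unequal class sizes; one pre-loads such an imbalance on $H$ and then colors $K_{k,k}$ to fill every class up to the common target.

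This last construction is the main obstacle. A coloring of $K_{k,k}$ realizes exactly the color-size vectors $(c_1,\dots,c_k)$ with $0\le c_i\le k$ whose positive entries split into two groups each summing to $k$ (the colors used on the two sides), and the blocked vector $(2,\dots,2)$ is precisely the parity obstruction above. Thus the task is to choose a proper coloring of $H$, with class sizes $h_i$, for which $c_i=(s+2)-h_i$ is nonnegative and of this realizable form. The tension is that enough imbalance must be present to escape $(2,\dots,2)$ — this is exactly where non-$k$-equitability of $H$ is used — while the bound $c_i\le k$ forces $h_i\ge s+2-k$, so the imbalance cannot be too large; reconciling these, most likely by choosing the coloring of $H$ carefully and again leaning on the inductive hypothesis for proper subgraphs, is where the real effort lies. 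Since every appeal to Conjecture~\ref{conjmain} and to the inductive hypothesis is to a graph of order at most $|G|\le n_0$ with at most $k\le k_0$ colors, the argument establishes Conjecture~\ref{strong} precisely at the level $(k_0,n_0)$, giving the stated refinement.
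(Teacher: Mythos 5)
Your skeleton is sound, and most of your reductions are correct and essentially forced: the direction Conjecture~\ref{strong} $\Rightarrow$ Conjecture~\ref{conjmain} is immediate; the parity argument showing that $H$ $k$-equitable implies $H+K_{k,k}$ has no equitable $k$-coloring is correct; the observation that $\Delta(G)\le k$ forces any copy of $K_{k,k}$ to be a full component (so $G=H+K_{k,k}$) is correct; your Case A ($H$ has no equitable $k$-coloring) is handled correctly by the induction together with the colorings $(3,1,2,\dots,2)$ and $(1,3,2,\dots,2)$ of the two copies of $K_{k,k}$; and the subcase $k\nmid|H|$ also goes through by mismatching targets, since the deviation vector $(1,3,2,\dots,2)$ is realizable on $K_{k,k}$ for odd $k$. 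But the remaining case --- $k$ divides $|H|$, and $H$ has an equitable $k$-coloring as well as some non-equitable proper $k$-coloring --- is not a detail that can be deferred: it is the entire content of the theorem, and your proposal explicitly leaves it open (``where the real effort lies''). For what it is worth, the present paper offers no proof to compare against: it quotes the statement from \cite{KK-Strong}, where precisely this case is the main technical theorem.

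Moreover, the tools you propose for that case cannot work as described. There, $H$ \emph{has} an equitable $k$-coloring, so neither Conjecture~\ref{conjmain} nor the inductive hypothesis (Conjecture~\ref{strong} for smaller graphs) applies to $H$ or to its subgraphs --- their hypotheses fail --- and applying Conjecture~\ref{conjmain} to $G$ itself only returns $K_{k,k}\subseteq G$, which you already know. What is required is an \emph{unconditional} recoloring theorem, e.g.: if $\Delta(H)\le k$, $|H|=ks$, and $H$ has both an equitable and a non-equitable proper $k$-coloring, then $H$ has a proper $k$-coloring with class sizes $(s+1,s-1,s,\dots,s)$ (or some other coloring whose deviation vector $(s+2-h_i)$ splits into two groups each summing to $k$). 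This is exactly the ``equitable versus nearly equitable'' theorem of \cite{KK-Strong}, proved there by a Hajnal--Szemer\'edi-style recoloring argument of substantial length; it does not follow from the soft counting, component-splitting, and induction in your outline. Until that case is supplied, the implication Conjecture~\ref{conjmain} $\Rightarrow$ Conjecture~\ref{strong}, which is the nontrivial half of the equivalence, remains unproven.
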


\begin{cor}
\label{che+}A graph $G$ with $|G|=3k$ and $\chi(G),\Delta(G)\leq k$
has no equitable $k$-coloring if and only if $k$ is odd and $G= K_{k,k} +K_k$. 
\end{cor}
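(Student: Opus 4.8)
The plan is to reduce the ``only if'' direction to the cited Theorem~\ref{k,s<5} and then carry out a short structural and arithmetic argument for both directions. Since $|G|=3k$, an equitable $k$-coloring is precisely a proper coloring in which each of the $k$ color classes has exactly $3$ vertices. Applying Theorem~\ref{k,s<5} with $s=3<5$: if $G$ has no equitable $k$-coloring, then $k$ is odd and $K_{k,k}\subseteq G$. So it remains to upgrade ``$K_{k,k}\subseteq G$'' to ``$G=K_{k,k}+K_k$'' and, conversely, to verify that $K_{k,k}+K_k$ has no equitable $k$-coloring for odd $k$.

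First I would extract the component structure. Let $A,B$ be the sides of a fixed copy of $K_{k,k}$ in $G$. Every $a\in A$ is already adjacent to all $k$ vertices of $B$, so $\deg_G(a)\ge k$; since $\Delta(G)\le k$ this forces $\deg_G(a)=k$ with $N(a)=B$. The symmetric statement holds for $B$, so $A\cup B$ spans exactly $K_{k,k}$ and sends no edge outside itself. Hence $G=K_{k,k}+G[C]$ where $C:=V(G)\setminus(A\cup B)$ has $|C|=k$. Thus $G=K_{k,k}+K_k$ iff $G[C]=K_k$, and it suffices to show that $G$ has an equitable $k$-coloring exactly when $G[C]\ne K_k$.

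The core is a demand-splitting computation. In $K_{k,k}$ every color class lies entirely in $A$ or entirely in $B$, so a proper coloring of $G$ is equitable iff the colors can be assigned so that, writing $d_i:=3-n_i^C$ for the number of times color $i$ must appear in $K_{k,k}$ (where $n_i^C$ is its multiplicity in a fixed proper coloring of $C$), the multiset $\{d_i\}$ splits into two blocks each summing to $k$, one filling $A$ and one filling $B$. If $G[C]=K_k$, every proper coloring of $C$ uses each color exactly once, so $d_i=2$ for all $i$; splitting $k$ twos into two blocks of sum $k$ needs $k$ even, impossible for odd $k$, which gives the ``if'' direction. If instead $G[C]\ne K_k$, pick nonadjacent $u,v\in C$, color them both with color $1$, and give the remaining $k-2$ vertices the distinct colors $2,\dots,k-1$; this is proper, and yields $d_1=1$, $d_k=3$, and $d_i=2$ for the $k-2$ colors in between. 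Here I would put the block $\{d_k\}\cup\{(k-3)/2 \text{ of the twos}\}$ (sum $k$, using $k$ odd so $(k-3)/2\in\mathbb Z$) on side $A$ and the rest on $B$; since the remaining $(k-1)/2$ twos together with $d_1=1$ also sum to $k$, both sides are filled exactly, producing an equitable $k$-coloring.

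The only genuinely hard input is Theorem~\ref{k,s<5}, which is quoted from~\cite{KKJGT,KK2} and ultimately rests on Theorem~\ref{main}; everything after it is elementary. The one point demanding care is the parity bookkeeping in the last paragraph: verifying that the two chosen blocks use exactly $k$ vertices on each side, so that $A$ and $B$ are filled precisely, and that every $d_i\in\{0,\dots,3\}$, so that the integer split genuinely corresponds to a proper equitable coloring rather than merely a feasible arithmetic decomposition.
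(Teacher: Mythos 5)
Your proof is correct, but it finishes differently from the paper. Both arguments begin identically: apply Theorem~\ref{k,s<5} with $s=3$, so the exceptional alternative ($k\ge5$ and $s\ge5$) is impossible and any counterexample has $k$ odd and $K_{k,k}\subseteq G$. The paper then closes by invoking the equivalence theorem from~\cite{KK-Strong} (Conjecture~\ref{conjmain} is equivalent to Conjecture~\ref{strong}, parameter by parameter): once Conjecture~\ref{conjmain} is available in the relevant range, Conjecture~\ref{strong} says a counterexample must have the form $G=H+K_{k,k}$ with $H$ $k$-equitable, and $|H|=k$ forces $H=K_k$; the ``if'' direction is likewise inherited from that statement. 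You instead bypass the \cite{KK-Strong} machinery: the degree argument ($\Delta(G)\le k$ forces $N(a)=B$ for $a\in A$ and $N(b)=A$ for $b\in B$) splits off $K_{k,k}$ as a component, so $G=K_{k,k}+G[C]$ with $|C|=k$, and then your demand-splitting computation handles both directions at once --- every color class of an equitable coloring meets $K_{k,k}$ inside $A$ or inside $B$, so the demands $d_i=3-n_i^C$ must partition into two blocks each summing to $k$; with $G[C]=K_k$ all $d_i=2$ and odd $k$ kills this, while for $G[C]\ne K_k$ your coloring with $d_1=1$, $d_k=3$, remaining $d_i=2$ splits as $3+(k-3)=k$ and $1+(k-1)=k$. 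I checked the bookkeeping (including $k=3$, where $(k-3)/2=0$, and $k=1$, where $G[C]=K_1=K_k$ automatically so the construction is never invoked); it is sound. What each approach buys: the paper's derivation is essentially two citations, but it leans on the black-box equivalence theorem of~\cite{KK-Strong}; yours is self-contained modulo Theorem~\ref{k,s<5}, and it makes the ``if'' direction explicit rather than absorbed into Conjecture~\ref{strong}. One small inaccuracy in your closing remark: Theorem~\ref{k,s<5} is an external input from~\cite{KKJGT} and \cite{KK2}; it does not rest on Theorem~\ref{main} of this paper.
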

We are now ready to complete our answer to Dirac's question for simple graphs.

\begin{proof}[Proof of Theorem~\ref{ch++}]
Assume $k\geq 2$ and $\delta(G)\geq2k-1$.
 It is apparent that
if any of (i), (H3), or (H4) in  Theorem~\ref{ch++}    fail, then $G$ does not have $k$ disjoint cycles. Now  
suppose $G$ satisfies (i), (H3), and (H4). If $k=2$ then $|G|\ge6$ and
$\delta(G)\ge3$. Thus $G$ has no subdivided edge, and only (d) of Lemma~\ref{LoL} is
possible. By (i), $G\ne K_5$; by (H4), $G$ is not a wheel; and by (H3),  $G$ is not
type (3) of Theorem~\ref{lovasz}. Then $G$ has $2$ disjoint cycles. Finally, suppose
$k\ge3$.
 Since $G$ satisfies (ii), we see $G\notin \{\mathbf Y_1,\mathbf Y_2\}$ and $G$ satisfies
(H2). If $|G|\ge3k+1$, then  $G$ has $k$ disjoint cycles by Theorem~\ref{main}.
Otherwise, $|G|=3k$ and $G$ has $k$ disjoint cycles if and only if its vertices can
be partitioned into disjoint $K_3$'s. This is equivalent to $\overline G$ having an
equitable $k$-coloring. By (ii), $\Delta(\overline G)\le k$, and by (H3), 
$\omega(\overline G)\le k$. Then
 by Brooks' Theorem, $\chi(\overline G)\le k$. By (H4) and Corollary~\ref{che+}, $\overline G$
has an equitable $k$-coloring. 
 \end{proof}

Next we turn to Ore-type results on equitable coloring. To complement
Theorem~\ref{main}, we need a theorem that characterizes when a graph $G$ with
$|G|=3k$  that satisfies (H2) and (H3) has $k$ disjoint cycles, or equivalently,
when its complement $\overline G$ has an equitable coloring. The complementary
version of $\sigma_2(G)$ is the  \emph{maximum Ore-degree} $\theta(H):=\max_{xy\in
E(H)}(d(x)+d(y))$. 
Then
$\theta(\overline{G})=2|G|-\sigma_{2}(G)-2$, and if $|G|=3k$ and 
$\sigma_{2}(G)\geq4k-3$ then $\theta(\overline{G})\leq2k+1$. Also, if $G$ satisfies
(H3) then $\omega(\overline G)\le k$.
This would correspond to an Ore-Brooks-type theorem on equitable coloring.

Several papers, including \cite{KK-Ore,KK-Ore2,KRS}, address equitable colorings of
graphs $G$ with $\theta(G)$ bounded from above. For instance, the following is a natural Ore-type
version of Theorem~\ref{hst}.

\begin{thm}[\cite{KK-Ore}]
 \label{hst-ore} Every graph $G$ with
$\theta(G)\leq2k-1$ has an equitable $k$-coloring. \end{thm}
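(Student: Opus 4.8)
The plan is to prove the statement by induction, reducing it to the Hajnal--Szemer\'edi theorem (Theorem~\ref{hst}) together with a rebalancing argument that exploits the structure forced by the Ore bound. First I would normalize the instance: by appending isolated vertices, which create no edges and hence do not change $\theta$, I may assume $|G|=ks$ for some integer $s$, so that an equitable $k$-coloring is precisely a partition of $V(G)$ into $k$ independent sets of size $s$. Call a vertex \emph{high} if $d(x)\ge k$ and \emph{low} otherwise. If $G$ has no high vertex then $\Delta(G)\le k-1$, so $\Delta(G)+1\le k$ and Theorem~\ref{hst} already supplies an equitable $k$-coloring; thus I may assume a high vertex exists. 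The single most useful consequence of the hypothesis $\theta(G)\le 2k-1$ is structural: if $x,y$ were adjacent high vertices then $d(x)+d(y)\ge 2k>2k-1$, so the high vertices form an \emph{independent set}, every neighbor of a high vertex is low, and each high vertex has degree at most $2k-2$. This is the leverage that replaces the global bound $\Delta\le k-1$ available in the classical theorem.

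For the induction itself I would induct on $\|G\|$. Choose an edge $xy$ with $x$ high (so $y$ is low), and apply the inductive hypothesis to $G-xy$, whose maximum Ore-degree is no larger. This yields an equitable $k$-coloring $f$ of $G-xy$. If $f(x)\ne f(y)$ then $f$ is already an equitable $k$-coloring of $G$ and we are done, so assume $f(x)=f(y)$: now $f$ is equitable and proper on $G$ except for the single monochromatic edge $xy$. The task is therefore to repair $f$, recoloring $x$ or $y$ out of their common class, while restoring equitability.

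The repair is carried out by the augmenting mechanism from the short proof of Theorem~\ref{hst}. Moving a single vertex out of its class drops that class to size $s-1$ and raises the receiving class to $s+1$, so I would set up the auxiliary digraph $\mathcal H$ on the color classes in which a class $A$ points to a class $B$ whenever some vertex of $A$ has no neighbor in $B$ and hence may be recolored into $B$. A directed path in $\mathcal H$ from the surplus class to the deficient class translates, by performing the moves from the far end backwards, into a sequence of legal single-vertex recolorings that returns the coloring to equitable while killing the edge $xy$. Hence it suffices to show that such a path always exists. Assuming it does not, let $\mathcal A$ be the set of classes reachable from the surplus class; then no vertex of $\bigcup\mathcal A$ is movable to a class outside $\mathcal A$, which forces every such vertex to have a neighbor in each outside class and yields a large lower bound on the number of edges leaving $\bigcup\mathcal A$. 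The contradiction is obtained by comparing this bound with the upper bound coming from $\theta(G)\le 2k-1$.

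The hard part, and the genuine content beyond Theorem~\ref{hst}, is exactly this final edge-counting step in the presence of high vertices: a high vertex may have as many as $2k-2$ neighbors and so is both difficult to move and a large contributor to the count, which is why the naive Hajnal--Szemer\'edi bookkeeping breaks. I would handle it by a discharging argument that distributes charge across the classes of $\mathcal A$, using crucially that the high vertices are independent and that the neighbors of each high vertex are all low, hence concentrated among low-degree vertices; this lets me bound the contribution of each high and each low vertex separately and push the total past what $\theta(G)\le 2k-1$ permits. I expect this discharging, together with the careful promotion of non-movable vertices to a second, terminal level of the analysis as in the classical short proof, to be where essentially all of the work lies.
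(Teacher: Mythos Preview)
The paper does not contain a proof of Theorem~\ref{hst-ore}: it is quoted from \cite{KK-Ore} as a known result in the survey Section~\ref{EqCol} and is not used elsewhere in the arguments, so there is no in-paper proof to compare your proposal against.

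As for the proposal itself, the overall architecture---normalize to $|G|=ks$, observe that $\theta(G)\le 2k-1$ forces the set of vertices of degree $\ge k$ to be independent, and then run an edge-deletion induction that falls back on the Hajnal--Szemer\'edi rebalancing machinery---is indeed the shape of the argument in \cite{KK-Ore}. Your observation that the low endpoint $y$ of the deleted edge has $d_G(y)\le k-1$ and hence can always be pushed to some other class is correct and is the reason the deficient class can be created. However, the passage you flag as ``the hard part'' is genuinely hard and your sketch does not yet contain the idea that makes it work: a straightforward edge count between $\bigcup\mathcal A$ and its complement does \emph{not} yield a contradiction from $\theta\le 2k-1$ alone, because high vertices inside $\mathcal A$ can legitimately have up to $2k-2$ neighbours outside, and being independent does not by itself bound their number. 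In \cite{KK-Ore} the contradiction comes not from a single discharging inequality but from a more delicate recursive/structural analysis of the unreachable classes (building ``solo'' vertices and a secondary reachability relation, exactly the two-level mechanism you allude to), and the Ore hypothesis enters repeatedly at the level of individual movable vertices rather than as a global edge bound. So your plan is on the right track, but the final paragraph would need to be replaced by the actual two-level argument rather than a discharging count; as written it is a correct outline with the decisive step still missing.
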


Even for proper (not necessarily equitable) coloring, an Ore-Brooks-type theorem requires forbidding some
extra subgraphs when  $\theta$ is $3$ or $4$. It was observed in \cite{KK-Ore2} that for 
$k=3,4$ there are graphs for which $\theta(G)\le2k+1$ and $\omega(G)\le k$, but
$\chi(G)\ge k+1$. The following theorem was proved for $k\ge6$ in \cite{KK-Ore2} and
then for $k\ge5$ in \cite{KRS}.  

\begin{thm}
\label{chrom} Let $k\ge5$. If $\omega(G)\le k$ and $\theta(G)\le2k+1$, then
$\chi(G)\le k$.
\end{thm}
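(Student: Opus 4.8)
The plan is to argue by contradiction from a vertex-critical counterexample, exploiting that the Ore bound $\theta(G)\le 2k+1$ forces an almost-regular structure. First I would let $G$ be a $(k+1)$-critical graph with $\omega(G)\le k$ and $\theta(G)\le 2k+1$; criticality gives $\delta(G)\ge k$. For any edge $xy$ we have $d(x)+d(y)\le 2k+1$, so the endpoints cannot both have degree at least $k+1$; hence every edge has an endpoint of degree exactly $k$. Calling a vertex \emph{low} if $d(v)=k$ and \emph{high} otherwise, this says the set $H$ of high vertices is independent. Moreover, if $u$ is high then all its neighbours are low, so each edge at $u$ has degree sum $d(u)+k\le 2k+1$, forcing $d(u)=k+1$. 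Thus the structure is rigid: low vertices have degree $k$, high vertices have degree exactly $k+1$, $H$ is independent, and every high vertex has all $k+1$ of its neighbours in the low set $L$.

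Next I would extract the structure of $G[L]$. Since $G$ is $(k+1)$-critical, Gallai's theorem on the subgraph induced by the vertices of degree $\chi(G)-1=k$ applies: every block of $G[L]$ is a complete graph or an odd cycle, and by $\omega(G)\le k$ each complete block has at most $k$ vertices. This pins down the local shape of $L$ around each high vertex; it constrains how the bichromatic Kempe components below can look and is used to control the recolouring.

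The heart of the proof is a Brooks-type recolouring at a high vertex, and this is where I expect the real difficulty. Fix a high vertex $u$ with neighbours $x_0,\dots,x_k\in L$ and a proper $k$-colouring $\phi$ of $G-u$ (which exists by criticality). If some colour were absent on $N(u)$ we could extend $\phi$ to $u$, a contradiction; so all $k$ colours occur on $N(u)$ and exactly one, say colour $1$, is repeated, on $x_0$ and $x_1$. The crucial leverage is that each $x_i$ is low, hence has degree $k-1$ in $G-u$, so in $\phi$ each $x_i$ has a colour missing from its own neighbourhood and is a candidate for recolouring. I would run Kempe swaps on pairs $\{1,c\}$ to recolour $x_0$ (or $x_1$) off colour $1$ and thereby free colour $1$ for $u$; the only obstruction is that the relevant $\{1,c\}$-components link $x_0$ and $x_1$ through the other neighbours of $u$. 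Tracking these obstructions via the Gallai-forest block structure, the degree and independence constraints force the neighbours together with $u$ to build a clique, and the resulting complete subgraph has more than $k$ vertices, contradicting $\omega(G)\le k$.

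The main obstacle is exactly this last step: making the Kempe-chain bookkeeping airtight when colour $1$ is used twice on $N(u)$, and showing that every way the swaps can fail produces a $K_{k+1}$. This is also where the hypothesis $k\ge 5$ is essential, since for $k=3,4$ there are genuine configurations (noted in \cite{KK-Ore2}) that survive this analysis; the argument therefore cannot be uniform in $k$ and must use the extra room provided by $k\ge 5$ to rule out the small dense exceptions, completing the contradiction and hence the proof (as carried out in \cite{KK-Ore2} for $k\ge 6$ and in \cite{KRS} for $k\ge 5$).
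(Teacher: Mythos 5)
First, be aware that this paper does not prove Theorem~\ref{chrom} at all: it is quoted as known background, ``proved for $k\ge6$ in \cite{KK-Ore2} and then for $k\ge5$ in \cite{KRS},'' and is used only as an ingredient in the equitable-colouring discussion of Section 4. So your proposal has to stand on its own as a proof, and it does not. The setup is fine: a $(k+1)$-critical counterexample has $\delta(G)\ge k$; the bound $\theta(G)\le 2k+1$ forces every edge to have an endpoint of degree exactly $k$, so the high vertices form an independent set, each of degree exactly $k+1$ with all neighbours low; and Gallai's theorem describes the blocks of $G[L]$. (You should also dispose of the case in which no high vertex exists: then $G$ is $k$-regular, and Brooks' Theorem already gives $\chi(G)\le k$, since $\omega(G)\le k$ excludes $K_{k+1}$ and $k\ge5$ excludes odd cycles.) But everything after the setup is asserted rather than proved. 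The sentence ``Tracking these obstructions via the Gallai-forest block structure, the degree and independence constraints force the neighbours together with $u$ to build a clique'' \emph{is} the theorem; that step is what occupies essentially all of \cite{KK-Ore2} and \cite{KRS}, and you close by citing those two papers as the place where it is ``carried out.'' A proof whose central step is outsourced to the references that prove the statement is a strategy summary, not a proof.

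Second, the missing step is genuinely non-routine, and your sketch contains a warning sign of this. The leverage you claim --- each low neighbour $x_i$ has degree $k-1$ in $G-u$, hence ``has a colour missing from its own neighbourhood and is a candidate for recolouring'' --- is not valid as stated: the $k-1$ neighbours of $x_i$ in $G-u$ may carry all $k-1$ colours other than $\phi(x_i)$, in which case the only colour absent from $N_{G-u}(x_i)$ is $\phi(x_i)$ itself and $x_i$ admits no greedy recolouring. More importantly, the naive Brooks-type Kempe analysis around the repeated colour on $N(u)$ provably cannot work uniformly in $k$: as this paper itself notes, for $k=3,4$ there exist graphs with $\theta(G)\le 2k+1$, $\omega(G)\le k$, and $\chi(G)>k$, so any correct argument must show concretely where $k\ge5$ breaks the obstruction configurations. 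Your proposal acknowledges this but gives no mechanism for it --- only that the ``extra room'' must suffice. Identifying and exploiting that mechanism is precisely the missing idea.
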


In the subsequent paper~\cite{KKMY}  we prove an analog of Theorem~\ref{main} for $3k$-vertex graphs.
\\

{\bf Acknowledgment.} We thank the referees for the helpful comments.

\end{document}